\theoremstyle{plain}
\newcolumntype{?}{!{\vrule width 2pt}}
\newcommand{\0}{\mathbf{0}}
\newcommand{\Id}{\mathbb I}
\def\det   {\mathop{\rm det}\nolimits}
\def\diag {\mathop{\rm diag}\nolimits}
\def\rank {\mathop{\rm rank}\nolimits}
\newcommand{\ii}{\mathfrak{i}}
\tikzset{
	treenode/.style = {shape=rectangle, rounded corners,
		draw, align=center,
		top color=white, bottom color=blue!20},
	root/.style     = {treenode, font=\Large, bottom color=red!30},
	env/.style      = {treenode, font=\ttfamily\normalsize},
	dummy/.style    = {circle,draw}
}
\def\roff  {\mbox{\boldmath$\varepsilon$}}
\newtheorem{theorem}{Theorem}[section]
\newtheorem{example}{Example}[section]
\newtheorem{remark}{Remark}[section]
\newtheorem{proposition}{Proposition}[section]
\title{	New numerical algorithm for deflation of infinite and zero eigenvalues and full solution of quadratic eigenvalue problems}
\author[1]{Zlatko Drma\v{c}}
\author[2]{Ivana \v{S}ain Glibi\'{c}}
\affil[1]{Department of Mathematics, Faculty of Science, University of Zagreb}
\affil[2]{Department of Mathematics, Faculty of Science, University of Zagreb}
\date{}
\numberwithin{equation}{section}
\begin{document}
	\maketitle
\begin{abstract}
	This paper presents a new method for computing all eigenvalues and  eigenvectors of  quadratic matrix pencil $Q(\lambda)=\lambda^2 M + \lambda C + K$. It is an upgrade of the \texttt{quadeig} algorithm by Hammarling, Munro and Tisseur  (ACM Transactions on Mathematical Software 39 (3), 2013.), which attempts to reveal and remove by deflation certain number of zero and infinite eigenvalues before QZ iterations. Proposed modifications of the \texttt{quadeig} framework are designed to enhance backward stability and to make the process of  deflating infinite and zero eigenvalues more numerically robust. In particular, careful preprocessing allows scaling invariant/component-wise backward error and thus better condition number. Further, using an upper triangular version of the Kronecker canonical form enables deflating additional infinite eigenvalues, in addition to those inferred from the rank of $M$.   Theoretical analysis and empirical evidence from thorough testing of the software implementation confirm superior numerical performances of the proposed method.
\end{abstract}

\maketitle	

\section{Introduction}

We propose a new numerical method for computing all eigenvalues with the corresponding left and right eigenvectors of the $n\times n$ quadratic eigenvalue problem
\begin{equation}\label{qep}
Q(\lambda)x = (\lambda^2 M + \lambda C + K)x = \0,\;\; M, C, K \in\mathbb{C}^{n\times n}.
\end{equation}
Such problem is usually associated with a damped system $M \ddot q(t) + C \dot q(t) + K q(t)=f(t)$, which arises in many applications in science and engineering. For an excellent review of applications and numerical methods we refer to \cite{Tisseur-Meerbergen-SIREW-2001}.

The dimension $n$ can be large, e.g. in millions \cite{Mehrmann2011}, and the matrices usually possess some structure such as e.g. sparsity that depends on the geometry of the underlying physical model, discretization method etc. In such cases, one is often interested in finding only those eigenvalues (with the corresponding eigenvectors) that meet some particular specifications; examples include the tasks of finding $k\ll n$ eigenvalues closest to some target value $\xi\in\mathbb{C}$, or all eigenvalues belonging to some prescribed $\Omega\subset\mathbb{C}$. In some other applications, the dimension $n$ may be moderate with non-sparse coefficient matrices and all eigenvalues, optionally with the corresponding eigenvectors, are requested.

Even in the large scale problems with sparse (or otherwise structured) coefficient matrices, an iterative method, such as e.g. the Second Order Arnoldi (SOAR \cite{bai2005soar}) uses the  Rayleigh-Ritz projection and the full solution of the projected problem $(\lambda^2 [X^* MX] + \lambda [X^* CX] + [X^* KX])v=\0$ of moderate size is needed. Here $X$ is an $n\times k$ orthonormal matrix that should span a subspace rich in the desired spectral information, and the dimension $k$ is moderate, $k\ll n$, so that the full solution with dense $k\times k$ coefficient matrices  is feasible. 

We focus to the complete solution of (\ref{qep}), i.e. finding all eigenvalues and eigenvectors; the dimension $n$ is assumed moderate. This is usually solved by linearization -- transformation to an equivalent eigenvalue problem for linear pencils, such as e.g.
\begin{equation}\label{linearization-C2}
C_2(\lambda) = \begin{pmatrix}
C & -\Id_n \\
K & \0_n 
\end{pmatrix} - \lambda \begin{pmatrix}
-M & \0_n\\
\0 & -\Id_n
\end{pmatrix} \equiv A - \lambda B.
\end{equation}
From the space of linearizations, one is advised to use the one that preserves, as much as possible, structural (spectral) properties of the original problem. This is important because numerical methods  can leverage particular structure when applied to an appropriate linearization. It should be stressed that linearizations algebraically equivalent to (\ref{qep}) are not numerically equivalent to the quadratic problem in the sense of sensitivity when solved in finite precision arithmetic.

For the linearized problem (\ref{linearization-C2}), the method of choice  is  the QZ method. In some cases this may run into difficulties, in  particular if $M$ is exactly or nearly rank deficient, which leads to (numerically) infinite eigenvalues. Whether the QZ is troubled by the presence of the infinite eigenvalues is a tricky question. On one hand, from an expert point of view, \cite{Watkins:2000:QZ-infinite} shows that the presence of infinite eigenvalues cannot derange the transmission of shifts during the QZ iterations. On the other hand, the filigran-work with plane rotations \cite{Watkins:2000:QZ-infinite} that ensures this robustness is often replaced (in software implementations) with faster block-oriented procedures, and infinite eigenvalues may remain latent and come as spurious absolutely large values, with negative impact on the accuracy of the computed genuine finite eigenvalues. In the trade-off between accuracy and speed, the developers usually choose speed and \emph{flops} seem more important than the number of correct digits in the output. However, the block oriented computation does provide considerably better performances, and this is an important aspect of software development. 

So, in general, it is advantageous to deflate the infinite eigenvalues early in the computational scheme, using computational tools that can be efficiently implemented on the state of the art computers. Similarly, if $K$ is rank deficient, then its null space provides eigenvectors for the eigenvalue $\lambda=0$, and removing it in a preprocessing phase facilitates more efficient computation of the remaining eigenvalues.  In both cases a nontrivial decision about the numerical rank has to be made. 

This motivated \cite{Hammarling:QUADEIG} to develop a new deflation scheme that removes  $n-r_M$ infinite and $n-r_K$ zero eigenvalues, where $r_M = \rank{M}$ and $r_K = \rank{K}$. The remaining generalized linear eigenvalue problem is of the dimension $r_M+r_K$; it may still have some infinite and zero eigenvalues, and their detection then depends on the performance of the QZ algorithm. The computation is done in the framework of the linearization (\ref{linearization-C2}), and the proposed approach has resulted in the \texttt{quadeig} algorithm. 


For a pair $(\lambda,x)$ of an eigenvalue and its right eigenvector, the optimal backward error to test whether $(\lambda,x)$ is acceptable is defined as (see \cite[\S 2.2]{tisseur2000backward})
\begin{equation}\label{eq:back-err-0}
\eta(\lambda\neq\infty, x)=\frac{\| (\lambda^2 M + \lambda C + K )x\|_2}{(|\lambda|^2 \|M\|_2+|\lambda|\|C\|_2 + \|K\|_2)\|x\|_2};\;\; \eta(\lambda=\infty,x) = \frac{\|Mx\|_2}{\|M\|_2\|x\|_2},
\end{equation}
where, by definition, $\lambda=\infty$ is an eigenvalue of (\ref{qep}) if and only if $\mu=0$ is an eigenvalue of the reversed pencil $\mu^2 K + \mu C + M$.
The backward stability (or the residual) are usually measured by $\eta(\lambda,x)$ and the computation is deemed stable/acceptable if $\eta(\lambda,x)$ is, up to a moderate factor, of the order of the machine precision. This is justified by the fact that we have an exact eigenpair of a nearby pencil with coefficient matrices $\widetilde{M}=M+\delta M$, $\widetilde{C}=C+\delta C$, $\widetilde{K}=K+\delta K$, where
\begin{equation*}
\eta(\lambda,x)\! =\! \min\{ \epsilon  :  (\lambda^2 \widetilde{M}+\lambda \widetilde{C}+\widetilde{K})x \! =\!\0,\;\|\delta M\|_2\leq\! \epsilon \|M\|_2, 
\|\delta C\|_2\leq\! \epsilon \|C\|_2, 
\|\delta K\|_2\leq\! \epsilon \|K\|_2
\}.
\end{equation*}

Using this, essentially only necessary, condition is often used as sufficient for declaring the computed result reliable. The theoretical advances to identify condition numbers as measures of sensitivity of the eigenvalues \cite[\S 2.3]{tisseur2000backward} are rarely used as part of numerical software solution. Unfortunately, such an approach is potentially prone to errors. The best way to illustrate the difficulty is by an example.

\begin{example}\label{EX-Intro-NLEVP-MM}
{
Consider the model of  a two-dimensional three-link mobile manipulator (robot for e.g. facade cleansing, construction) \cite{HOU19961554}.\footnote{This  example is included in the NLEVP benchmark collection \cite{betcke2010nlevp}.} Linearisation around a particularly chosen working point yields a $5 \times 5$ quadratic matrix polynomial (\ref{qep}) with the coefficient matrices
\begin{equation}\label{eq:mob-man-MCK}
M = \begin{pmatrix}
M_0 & \0_{3\times 2}\\
\0_{2\times 3} & \0_{2\times 2}
\end{pmatrix}, \hspace{2 mm} C = \begin{pmatrix}
C_0 & \0_{3\times 2} \\
\0_{2\times 3} & \0_{2\times 2}
\end{pmatrix}, \hspace{2 mm} K = \begin{pmatrix}
K_0 & -F^T_0\\
F_0 & \0_{2\times 2}
\end{pmatrix},
\end{equation}
where $F_0 = \left(\begin{smallmatrix}
1 & 0 & 0\\
0 & 0 & 1
\end{smallmatrix}\right)$ enforces certain constraints, $M_0 = \left(\begin{smallmatrix}
18.7532 & -7.94493 & 7.94494\\
-7.94493& 31.8182& -26.8182\\
7.94494 &-26.8182 & 26.8182\\
\end{smallmatrix}\right)$, 
$$
C_0 = \left(\begin{smallmatrix}
-1.52143& -1.55168 & 1.55168\\
3.22064 & 3.28467 & -3.28467\\
-3.22064& -3.28467& 3.28467
\end{smallmatrix}\right) ,\;\;
K_0 = \left(\begin{smallmatrix}
67.4894& 69.2393 & -69.2393\\
69.8124 & 1.68624& -1.68617\\
-69.8123 & -1.68617 & -68.2707
\end{smallmatrix}\right) .
$$
The matrix $K$ has full rank, and the rank of $M$ is $r_M=3$. This means that there are at least $n-r_M = 2$ infinite eigenvalues. We computed the eigenvalues using the \texttt{quadeig}  algorithm  \cite{Hammarling:QUADEIG} with default options as follows:\footnote{Al computations were done using Matlab 8.5.0.197613 (R2015a) under Windows 10.  The roundoff unit in Matlab is $\texttt{eps}\approx 2.2\cdot^{-16}$.}
\begin{equation}\label{eq:quadeig:mob-man}
\begin{array}{ll}
\lambda_1 =  \texttt{-5.1616e-002 -2.2435e-001i} & \lambda_6 = \texttt{1.9573e+06 - 3.3993e+06i} \cr 
\lambda_2 =  \texttt{-5.1616e-002 +2.2435e-001i} & \lambda_7 = \texttt{1.9573e+06 + 3.3993e+06i} \cr 
\lambda_3 =  \texttt{2.7700e+05 - 4.7988e+05i} & \lambda_8 = \texttt{-3.9305e+06 + 0.0000e+00i} \cr 
\lambda_4 =  \texttt{2.7700e+05 + 4.7988e+05i}  & \lambda_9 = \texttt{Inf}  \cr
\lambda_5 =  \texttt{-5.5417e+05 + 0.0000e+00i } & \lambda_{10} = \texttt{Inf}
\end{array}
\end{equation}
%
%
%
\noindent Next, we compute the eigenvalues by a direct application of the QZ algorithm to (\ref{linearization-C2}). The QZ algorithm found $8$ infinite and two finite eigenvalues:
\begin{equation}\label{eq:polyeig:mob-man}
	\begin{array}{l}
	\lambda_1 = \texttt{-5.161621336216381e-002 -2.243476109085836e-001i} \cr 
	\lambda_2 = \texttt{-5.161621336216381e-002 +2.243476109085836e-001i}\cr
	\lambda_3 = \cdots = \lambda_8=\texttt{Inf}, \;\;\;\lambda_9=\lambda_{10} = \texttt{-Inf}.
	\end{array}
\end{equation}
If we check the backward errors, then both (\ref{eq:quadeig:mob-man}) and (\ref{eq:polyeig:mob-man}) (with the corresponding right eigenvectors) pass the backward stability test with $\eta(\lambda,x)$ at the level of machine roundoff for all eigenvalues.
Yet, (\ref{eq:quadeig:mob-man}) has six finite eigenvalues more than  (\ref{eq:polyeig:mob-man}), and those six values are $O(10^6)$ in modulus.
How can we tell whether those values correspond to genuine $O(10^6)$ big finite eigenvalues, or whether they should be declared spurious? After all, the backward error (\ref{eq:back-err-0}) below $n\cdot \texttt{eps}$ seems reassuring, but the same holds for (\ref{eq:polyeig:mob-man}) as well.  In a particular application, engineer's intuition may help, but for a black-boxed numerical software this remains an important and difficult problem. This clearly shows that the problem is nontrivial; one can easily imagine similar ambiguity when solving a problem with, say,  $n > 10^4$, where different methods give completely different results, and all with small backward error (\ref{eq:back-err-0}). 
\hfill $\diamond$ 
}
\end{example}
%

An important message of Example \ref{EX-Intro-NLEVP-MM} is that backward stability measured by (\ref{eq:back-err-0}) is only a necessary/desirable condition which may not be enough for reliable computation.
 More strengthen measure for backward stability is required, such as the component-wise backward error (residual)
 \begin{eqnarray}
\!\!\!\! \omega(\lambda,x) &=& \min\{ \epsilon  :  (\lambda^2 \widetilde{M}+\lambda \widetilde{C}+ \widetilde{K})x=\0,\;\;|\delta M|\leq \epsilon |M|, 
 |\delta C|\leq \epsilon |C|, 
 |\delta K|\leq \epsilon |K|
 \} \nonumber \\  
 &=& \max_{i=1:n} \frac{|(\lambda^2 M + \lambda C + K)x|_i}{((|\lambda|^2|M| + |\lambda||C| + |K|)|x|)_i} , \;\widetilde{M}\!=\! M\! +\!\delta M, \widetilde{C}\! =\! C\! +\! \delta C, \widetilde{K}\! =\! K\! +\! \delta K, \label{eq:ComponentwiseBE}
 \end{eqnarray}
 and the sensitivity of the eigenvalues must be estimated using appropriate condition number, see \cite{higham1998structured},  \cite{higham2007backward}, \cite{Higham-Mackey-Scaling-2008}.
 Using $\omega(\lambda, x)$ in Example \ref{EX-Intro-NLEVP-MM} provides valuable information. Although \texttt{quadeig} is considered better that \texttt{polyeig} \cite{Hammarling:QUADEIG}, in this example the QZ algorithm (including its preprocessing) seems more successful, see Figure \ref{fig:EX-mob-manip-2}.

\begin{figure}[H]
	\centering
	\includegraphics[width=0.75\textwidth, height=1.6in]{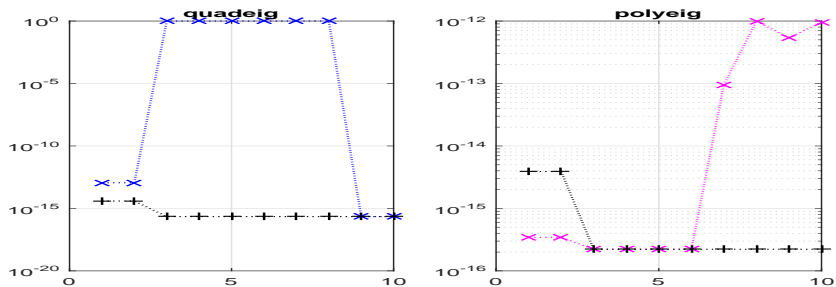}
	\caption{\label{fig:EX-mob-manip-2} (Example \ref{EX-Intro-NLEVP-MM}.) The backward errors (\ref{eq:ComponentwiseBE}). (We display in log scale $\max(\omega(\lambda,x),\texttt{eps})$.) \emph{Left panel}: Eigenvalues (\ref{eq:quadeig:mob-man})(\textcolor{blue}{x}) computed by \texttt{quadeig}. \emph{Right panel}: Eigenvalues (\ref{eq:polyeig:mob-man})(\textcolor{magenta}{x}) computed by \texttt{polyeig}. In both graphs, the pluses denote the backward error for the method \texttt{KVADeig} proposed in this paper.}	
\end{figure}	 
 
Entry--wise backward stability (in the sense that the relative backward error in each matrix entry of $M$, $C$, $K$ is small) is in general not
feasible; not even in the case $n=1$, see
\cite{Mastronardi-VanDooren-quad-back-err-ETNA}. For practical usage, a mixed analysis (backward-forward) might be more appropriate, with more general reference values for measuring size of the entry-wise/column-wise backward errors, e.g. $|\delta M|\leq \epsilon \Xi_M$, where $\Xi_M \geq \0$.


To the best of our knowledge, \texttt{quadeig} \cite{Hammarling:QUADEIG} is the only published algorithm for the full solution of (\ref{qep}) that is supported by a backward error analysis and thoroughly tested, with publicly available software implementation.
Our main contribution in this work is an upgrade of of the framework set by \texttt{quadeig}, and it has two main components. First, we treat the numerical rank issue and te backward error more carefully, so that our proposed method has better structured backward error in the sense of \cite{higham1998structured}. This is confirmed by theoretical analysis and numerical experiments.  Second, we interpret the deflation strategy deployed in \texttt{quadeig} as the beginning step of computing the Kronecker canonical structure associated with the zero/infinite eigenvalue, which motivates taking further steps toward the Kronecker canonical form (KCF), thus opening possibility for removing additional infinite eigenvalues. Indeed, we show that our proposed method is capable of discovering infinite eigenvalues that are beyond the reach of \texttt{quadeig}. A preview of the performance of the  new proposed method, designated as \texttt{KVADeig}\footnote{Pronounced as \texttt{quadeig}.}, is given in Figure \ref{fig:EX-mob-manip-2}.

\subsection{Outline of the paper}
The rest of the paper is organized as follows. In \S \ref{S=RRD} we review, for the readers convenience, numerical computation of rank revealing decompositions. In particular, we review the pivoted QR factorization and the complete orthogonal (URV) decomposition, and in \S \ref{ss:COD} we emphasize the importance of pivoting for more structured backward error bounds in case of unbalanced rows in data matrices.  Further, in \S \ref{s:kronecker}, we recall the Van Dooren's algorithm \cite{van1979computation} for computing the KCF of general matrix pencil $A-\lambda B$. We rewrite the Van Dooren's algorithm in an upper triangular form, and use the rank revealing QR factorization instead of the SVD; this procedure is at the core of our new algorithm. 
Numerical equivalence of linearization and the original quadratic problem in the sense of backward stability is a delicate issue and special preprocessing by parameter scaling is recommended; see an enlightening discussion and examples in
\cite{Higham-Mackey-Scaling-2008}, and implementation details in \texttt{quadeig} \cite{Hammarling:QUADEIG}. Scaling is briefly reviewed in \S \ref{S=S+B}; in \S \ref{SS=B} we also propose balancing matrix entries by diagonal scalings of the coefficients of (\ref{qep}).
In \S \ref{S=New-Deflation} we describe our modified approach to the \texttt{quadeig} type preprocessing. The goal is to adapt the rank revealing steps to the form of the KCF described in \S \ref{SS=KCF-QR}. In \S \ref{SS=KVADeig-backward} we provide detailed backward error analysis of the first two steps of the reduction to the KCF, and prove backward stability in terms of the original data matrices.
The proof is instructive because it shows how the modifications introduced in \texttt{KVADeig} contribute to backward stability. The global structure of the new algorithm is summarized in \S \ref{S=KVADEIG-global}. Numerical experiments are presented in \S \ref{S=NUMEX}. Concluding remarks and comments on ongoing and future work are given in  \S \ref{S=Conclusion}.

\section{Rank revealing decompositions}\label{S=RRD}
Since detecting zero or infinite eigenvalues is based on numerical rank decision, we briefly discuss rank revealing decompositions (RRD, see \cite{dgesvd-99}). For a general $m\times n$ matrix $A$, we say that $A = X D Y^*$ is a rank revealing decomposition if both $X$ and $Y$ are of full column rank and well conditioned, and $D$ is diagonal nonsingular. In practical computation, such a decomposition is computed only approximately and we have $A+\delta A = \widetilde{X} \widetilde{D}\widetilde{Y}^*$, where $\delta A$ denotes initial uncertainty and/or the backward error that corresponds to the numerically computed $\widetilde{X}$, $\widetilde{D}$ and $\widetilde{Y}$. Hence, any decision on the rank actually applies to $A+\delta A$. We refer to \cite{Golub-Klema-Stewart-Numerical_rank} for a more in depth discussion and definition of a numerical rank. 

Since the full rank matrices are open dense set in $\mathbb{C}^{m\times n}$ ($\mathbb{R}^{m\times n}$), it is unlikely that, in general, the rank deficiency  will be determined correctly in finite precision computation. Furthermore, in many applications the matrix has been already contaminated by errors (initial uncertainty, previous computational steps etc.) and a firm statement about its rank is illusory. Since the zero and the infinite eigenvalues are rooted in rank deficiency, it is clear that revealing them numerically is a delicate issue. 


In this section, we review the pivoted QR factorization, and the complete orthogonal decomposition (URV, ULV), which will be used as rank revealing decompositions. We discuss the backward stability, which will be an important ingredient in the analysis of the proposed algorithm.

\subsection{QR factorization with column and complete pivoting}\label{SS=QRF-review}
QR factorization with column pivoting is a tool of trade in a plethora of applications, in particular when the numerical rank of a matrix plays an important role. It is the main computational tool in the reduction procedure in the \texttt{quadeig} algorithm as well.

 Particularly successful is the Businger--Golub 
\cite{bus-gol-65} pivot strategy which, for
$A\in \mathbb{C}^{m\times n}$, computes
a permutation matrix $P$, a unitary $Q$ and an $\min(m,n)$ upper triangular (trapezoidal if $m<n$) matrix
$R$ such that
\begin{equation}\label{QRCP}
A P = Q \begin{pmatrix} R \cr \0 \end{pmatrix},\;\;\mbox{where}\;\;|R_{ii}|^2\geq {\sum_{k=i}^j
	|R_{kj}|^2},\;\;\mbox{for all}\;\;1\leq i\leq j \leq n.
\end{equation}
Here, for the sake of brevity, we consider only the case $m\geq n$. If $m<n$, then $R$ is $m\times n$ upper trapezoidal and the zero block in (\ref{QRCP}) is void.
If $\rho=\mathrm{rank}(A)$, then $\prod_{i=1}^\rho R_{ii}\neq 0$ and $R(\rho+1:n,\rho+1:n)=\0$. If $k\in\{1,\ldots, n\}$, and if we introduce the block partition
\begin{equation}\label{eq:R:block-part}
R = \left(\begin{smallmatrix} R_{[11]} & R_{[12]} \cr \0 & R_{[22]}\end{smallmatrix}\right),\;\;R_{[11]}\in\mathbb{C}^{k\times k}, 
\end{equation}
\begin{equation}\label{eq:A_k:QRCP}
\mbox{then}\;\;\;(A + \Delta A)P \equiv (A - Q \left(\begin{smallmatrix} \0 & \0 \cr \0 & R_{[22]} \cr \0 & \0\end{smallmatrix}\right)\! P^T ) P = Q\! \left(\begin{smallmatrix} R_{[11]} & R_{[12]} \cr \0 & \0 \cr \0& \0\end{smallmatrix}\right)\!,\;\Delta A \equiv - Q \!\left(\begin{smallmatrix} \0 & \0 \cr \0 & R_{[22]} \cr \0 & \0\end{smallmatrix}\right) \! P^T .
\end{equation}
If $k$ is such that $R_{[11]}$ is of full rank, then $A+\Delta A$ is of rank $k$ and 
\begin{equation}\label{eq:DA-R22}
\|\Delta A\|_F = \|R_{[22]}\|_F\leq \sqrt{n-k} |R_{k+1,k+1}|.
\end{equation}
Hence, if $\gamma >0$ is a given threshold, and if we can find an index $k$ ($1\leq k<n$) such that 
\begin{equation}\label{eq:QRCP:gamma-k}
\sqrt{n-k} |R_{k+1,k+1}|/\|A\|_F \leq \gamma,
\end{equation}
then $A$ is $\gamma$-close to the rank $k$ matrix $A+\Delta A$,  whose pivoted QR factorization (\ref{eq:A_k:QRCP}) is obtained from (\ref{QRCP}) by setting in the partition (\ref{eq:R:block-part}) the block  $R_{[22]}$ to zero. Of course, we would take the smallest possible $k$ that satisfies (\ref{eq:QRCP:gamma-k}). 
The essence of rank revealing capability of the factorization is in the fact that such a $k$ will very likely be visible on the diagonal of $R$ if $A$ is close to a rank $k$ matrix. This is due to the fact that the $|R_{ii}|$'s mimic the distribution of the singular values of $A$, \cite{ste-97}, \cite{ste-97-qlp}.


\begin{remark}
{
The rank--$k$ approximation $A+\Delta A$ defined in (\ref{eq:A_k:QRCP}) in general does not share the optimality property 	of the closest rank $k$ matrix $A_k$ from the SVD, but it has one distinctive feature: it always matches $A$ exactly at the selected $k$ columns, while $A_k$ in general does not match any part of $A$. 
}
\end{remark}	
\begin{remark}
	{
An efficient and numerically reliable implementation of (\ref{QRCP}) is available e.g. in LAPACK \cite{LAPACK} in the function \texttt{xGEQP3}, which is also under the hood of the Matlab's function \texttt{qr()}; for the numerical and software details we refer to \cite{drmac-bujanovic-2008}.
}
\end{remark}
\subsubsection{Backward stability}
Computation of the QR factorization in finite precision arithmetic is backward stable: for the computed factors $\widetilde{P}$, $\widetilde{Q}$, $\widetilde{R}$, there exist a backward error $\delta A$ and a unitary matrix $\widehat{Q}$ such that 
\begin{equation}\label{QRCP-backward-error}
(A+\delta A) \widetilde{P} = \widehat{Q}\begin{pmatrix} \widetilde{R} \cr \0 \end{pmatrix},\;\; \|\delta A\|_F \leq \epsilon_{qr} \|A\|_F,\;\; \| \widetilde{Q}-\widehat{Q}\|_F \leq \epsilon_{qr} .
\end{equation}
Here $\epsilon_{qr}$ is bounded by a moderate function of the matrix dimensions times the machine roundoff $\roff$.
In fact, the backward stability is even stronger -- the backward error in each column is small relative to its norm, 
\begin{equation}\label{QRCP-backward-error-columns}
\|\delta A(:,i)\|_2 \leq \epsilon_{qr} \|A(:,i)\|_2,\;\;i=1,\ldots, n.
\end{equation}
This is an important feature if some columns of $A$ are, by its nature, much smaller than the largest ones (different weighting factors, different physical units); (\ref{QRCP-backward-error-columns}) assures that the computed factorization contains the information carried by small columns of $A$. While (\ref{QRCP-backward-error}, \ref{QRCP-backward-error-columns})  hold independent of pivoting,  pivoting is important for the accuracy of the computed factorization, and for the rank revealing. 

If, for a suitable partition of $\widetilde{R}$, analogous to (\ref{eq:R:block-part}), we can determine $k$ such that $\widetilde{R}_{[22]}$ can be chopped off, we have 
 \begin{eqnarray}
 && (A+\delta A + \Delta A) \widetilde{P} = \widehat{Q}\left(\begin{smallmatrix} \widetilde{R}_{[11]} & \widetilde{R}_{[12]} \cr \0 & \0 \cr \0& \0\end{smallmatrix}\right),\;\; \|\delta A\|_F \leq \epsilon_{qr} \|A\|_F,\;\; \| \widetilde{Q}-\widehat{Q}\|_F \leq \epsilon_{qr}, \label{QRCP-backward-error-chopped}\\
 && \Delta A \equiv - \widehat{Q} \!\left(\begin{smallmatrix} \0 & \0 \cr \0 & \widetilde{R}_{[22]} \cr \0 & \0\end{smallmatrix}\right) \! \widetilde{P}^T  \label{eq:DA-tilde} \;\;\mbox{(estimated using (\ref{eq:DA-R22})),}\;\;
 (\Delta A \widetilde{P})(:,1:k)=\0.
 \end{eqnarray}
\subsubsection{Complete pivoting}\label{SSS=QRCP}
In some applications (e.g. weighted least squares) the rows of the data matrix may vary over several orders of magnitude, and it is desirable to have backward error that can be bounded row-wise analogously to (\ref{QRCP-backward-error-columns}). A pioneering work is done by Powell and Reid \cite{pow-rei-68}, who introduced QR factorization with complete pivoting. More precisely, in a $j$-th step, before deploying the Householder reflector to annihilate below-diagonal entries in the $j$-th column, row swapping is used to bring the absolutely largest entry to the diagonal position. As any pivoting, this precludes efficient blocking and using BLAS 3 level primitives. 

\cite{bjorck-NMLS-book-1996} noted that the complete pivoting can be replaced with an initial sorting of the rows of $A$ in monotonically decreasing order with respect to the $\ell_\infty$ norm. If $P_r$ is the corresponding row permutation matrix, and if we set $A:= P_r A$, then
\begin{equation}\label{sort}
\| A(1,:) \|_{\infty} \geq \| A(2,:) \|_{\infty} \geq \ldots \geq \| A(m,:) \|_{\infty},
\end{equation}
and we proceed with the column pivoted factorization (\ref{QRCP}). An advantage of replacing the dynamic complete pivoting of Powell and Reid with the initial pre-sorting (\ref{sort}) followed by column pivoted QRF (\ref{QRCP}) is more efficient software implementation. 
 An error analysis of this scheme and Householder reflector based QR factorization is given by Cox and Higham \cite{cox-hig-98}. 
\begin{equation}\label{eq:dA-rows-1}
\max_{i=1:m} \frac{\|\delta A(i,:)\|_\infty}{\|A(i,:)\|_\infty} \leq \epsilon_{qr} \max_{i=1:m} \frac{\alpha_i}{\|A(i,:)\|_\infty}\equiv \epsilon_{qr}^{\rightrightarrows},\;\;\mbox{where}\;\;
\alpha_i = \max_{j,k} |\widetilde{A}^{(k)}_{ij}| ,
\end{equation}
and $\widetilde{A}^{(k)}$ is the $k$th computed (in finite precision arithmetic) intermediate matrix in the Householder QR factorization. As a result of initial row ordering and the column pivoting, \cite{cox-hig-98} shows that 
\begin{equation}\label{eq:dA-rows-2}
\alpha_i \leq \left\{ \begin{array}{ll} \sqrt{m-i+1}(1+\sqrt{2})^{i-1} \|A(i,:)\|_\infty, & i\leq n  \cr (1+\sqrt{2})^{n-1}\|A(i,:)\|_\infty, & i > n\end{array}\right. ,
\end{equation}
where the factor $(1+\sqrt{2})^{n-1}$ is almost never experienced in practice.

\begin{remark}\label{REM:QRCPP}
{
If we write the completely pivoted factorization as 
$P_r A P_c = Q \left(\begin{smallmatrix} R \cr \0 \end{smallmatrix}\right)$, then $A P_c = (P_r^T Q) \left(\begin{smallmatrix} R \cr \0 \end{smallmatrix}\right)
$
is the column pivoted QR factorization (since $P_r$ is orthogonal); hence we can ease the notation of the complete pivoting and absorb the row permutation in the orthogonal factor, $Q\leftarrow P_r^T Q$. Note that the row pivoting brings nothing new to the rank revealing property that is encoded in the triangular factor (this is because of the essential uniqueness of the factorization). However, row pivoting has the important role in the numerical accuracy of the computed factorization.
}
\end{remark}

 \subsection{The complete orthogonal factorization (ULV, URV)}\label{ss:COD}
 Suppose that in the QR factorization (\ref{QRCP}), the matrix $A$ is of rank $k < n$, so that in the block partition (\ref{eq:R:block-part}) $R_{[22]}=\0$. In many instances, it is convenient to compress the trapezoidal matrix $(R_{[11]}\; R_{[12]})$ to (upper or lower) triangular form by an additional RQ or LQ factorization.
The resulting complete orthogonal decomposition, often used in least squares computation \cite{law-han-74}, \cite{Hough-Vavasis-CODWLS-1997}, is one of the computational subroutines in the \texttt{quadeig} algorithm.
 
 The LQ factorization of $(R_{[11]}\; R_{[12]})$ is equivalent to computing the QR factorization 
 \begin{equation}\label{eq:LQ-ULV}
 \left(\begin{smallmatrix}
 R^*_{[11]}\\
 R^*_{[12]}
 \end{smallmatrix}\right) = Z_R^* \left(\begin{smallmatrix}
 T^*_{[11]}\\
 \0
 \end{smallmatrix}\right),
 \end{equation}
 where $T_{11}\in \mathbb{C}^{k\times k}$ is lower triangular and nonsingular. By a composition of these two steps we get the so called complete orthogonal decomposition of $A$
 \begin{equation}\label{eq:A=QTZ}
 A = Q \left(\begin{smallmatrix}
 T_{[11]} & \0\\
 \0 & \0
 \end{smallmatrix}\right)Z,\;\;\mbox{where $Z = Z_RP^T$.}
 \end{equation}
  In the QR factorization, the matrix $A$ is multiplied from the left by a sequence of unitary transformations. Hence, there is no mixing of the columns; we can analyse the process for each column separately; that is why the column-wise backward error bound (\ref{QRCP-backward-error-columns}) is natural and straightforward to derive.
  The transformations from the right in the pivoted QR factorization are the error free column interchanges.
  
  On the other hand, (\ref{eq:A=QTZ}) involves nontrivial two--sided transformations of $A$, and more careful implementation and error analysis are needed to obtain backward stability similar to the one described in \S \ref{SS=QRF-review}. More precisely, the backward error in $A$ of the type (\ref{QRCP-backward-error}) is a priori assured, because the transformations are unitary/orthogonal, but finer structure such as (\ref{QRCP-backward-error-columns}) requires some more work. A sharper backward error bound is attractive because the complete orthogonal decomposition is used in the reduction procedure in \texttt{quadeig}, and in our new proposed method. 
  
  Based on the discussion in \S \ref{SSS=QRCP}, we compute with complete pivoting both the rank revealing QR factorization of $A$ and (\ref{eq:LQ-ULV}).
 The resulting scheme for computing the complete orthogonal decomposition is summarized in Algorithm \ref{a:COD}.
 \begin{algorithm}[h!]
 	\caption{Complete orthogonal decomposition of $A\in\mathbb{C}^{m\times n}$}
 	\label{a:COD}
 	\begin{flushleft}
 		\textbf{INPUT:} $A\in\mathbb{C}^{m\times n}$ \\
 		\textbf{OUTPUT:} $Q$, $T_{[11]}$, $Z$, so that $A = Q \left(\begin{smallmatrix}
 		T_{[11]} & \0\\
 		\0 & \0
 		\end{smallmatrix}\right) Z^*$
 	\end{flushleft}
 	\begin{algorithmic}[1]
 		\STATE Compute permutation $P_2$ such that $\|e_i^TP_2A\|_{\infty} \geq \|e^T_{i+1}P_2A\|_{\infty}$, $i=1,\ldots,m-1$.
 		\STATE Compute the column pivoted QR factorization $(P_2^T A)P = Q \left(\begin{smallmatrix}
 		R\\
 		\0
 		\end{smallmatrix}\right)$, $R\in\mathbb{C}^{\rho\times n}$, $\rho=\mathrm{rank}(A)$.
 		\STATE Compute the QR factorization with complete pivoting:
 		$(\Pi_2 R^*)\Pi_1 
 		= Z_R \begin{pmatrix}
 		T^*_{[11]}\\
 		\0
 		\end{pmatrix}.$
 		\STATE $Z=P\Pi^T_2Z_R$, $Q = P_2^T Q\left(\begin{smallmatrix}
 		\Pi_1 & \0 \\
 		\0 & \Id_{m-k}
 		\end{smallmatrix}\right)$
 	\end{algorithmic}
 \end{algorithm}
 \vspace{-4mm}
 \subsubsection{Backward error analysis}

 The following theorem provides a backward error bound for the Algorithm \ref{a:COD}.  To our best knowledge, this approach to implementing and analyzing the URV/ULV is new.
 \begin{theorem}
 	Let $k$ be the numerical rank determined in step 2., with the computed trapezoidal (or triangular) factor $\widetilde{R}\in\mathbb{C}^{k\times n}$. 
 	Let $\widetilde{Q}$, $\widetilde{T}_{[11]}\in\mathbb{C}^{k\times k}$ and $\widetilde{Z}$ be the computed factors of complete orthogonal decomposition of $A$. Then there exist backward perturbations $\delta A$, $\Delta A$, $\delta\widetilde{R}$ and exactly unitary matrices $\widehat{Q}\approx \widetilde{Q}$, $\widehat{Z}\approx \widetilde{Z}$ that define the complete orthogonal decomposition 
 	\begin{equation*}
 	A + \delta A + \Delta A + \widehat{Q} \left(\begin{smallmatrix} \delta\widetilde{R} \cr \0 \end{smallmatrix}\right) \widetilde{P}^T = \widehat{Q} \left(\begin{smallmatrix} \Pi_1 & \0 \cr \0 & \Id_{m-k} \end{smallmatrix}\right) \left(\begin{smallmatrix}
 	\widetilde{T}_{[11]} & \0_{k,n-k}\\
 	\0_{m-k,k} & \0 
 	\end{smallmatrix}\right)\widehat{Z}^*\Pi_2 \widetilde{P}^T,
 	\end{equation*}
 	where $\delta A$ is estimated as in (\ref{QRCP-backward-error-columns}), $\Delta A$ is as in (\ref{eq:DA-tilde}), with an estimate as in (\ref{eq:DA-R22}), and 
$ 	\|\delta \widetilde{R}(:,i)\|_2 \leq \varepsilon_3 (1+\varepsilon_1)\|{A}(:,i)\|_2$
 	is bounded analogously to (\ref{eq:dA-rows-1}, \ref{eq:dA-rows-2}).
 \end{theorem}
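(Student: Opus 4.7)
The plan is to chain three standard backward-error analyses — the row-sorted, column-pivoted QR in Step 2, the truncation of the trailing block $\widetilde{R}_{[22]}$ enabled by the rank decision, and the complete-pivoted QR of $\widetilde{R}^*$ in Step 3 — and then push the last perturbation from $\widetilde{R}$ back onto $A$ through the unitary factor produced in Step 2.

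First, I would apply the backward analysis summarised in (\ref{QRCP-backward-error})--(\ref{QRCP-backward-error-columns}) to Step 2. This yields an exactly unitary $\widehat{Q}$ with $\|\widetilde{Q}-\widehat{Q}\|_F\leq\varepsilon_1$ and a backward perturbation $\delta A$ obeying the column-wise bound $\|\delta A(:,i)\|_2\leq\varepsilon_1\|A(:,i)\|_2$; since $P_2$ permutes only rows, column norms are preserved, so the column-wise bound on $P_2^T A$ transfers verbatim to $A$. Using the numerical rank $k$, I then chop off $\widetilde{R}_{[22]}$ as in (\ref{QRCP-backward-error-chopped}), producing the additional perturbation $\Delta A$ of the form (\ref{eq:DA-tilde}) with the Frobenius bound (\ref{eq:DA-R22}).

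Next, I would apply the Cox--Higham analysis (\ref{eq:dA-rows-1})--(\ref{eq:dA-rows-2}) to Step 3, whose input $\Pi_2\widetilde{R}^*$ has been row-sorted and is then reduced with column pivoting $\Pi_1$. This supplies an exactly unitary $\widehat{Z}\approx\widetilde{Z}$ and a row-wise perturbation of $\widetilde{R}^*$; transposed, this is a column-wise perturbation $\delta\widetilde{R}$ of $\widetilde{R}$ satisfying $\|\delta\widetilde{R}(:,i)\|_2\leq\varepsilon_3\|\widetilde{R}(:,i)\|_2$, and the exact Step 3 identity reads $\Pi_2(\widetilde{R}^*+\delta\widetilde{R}^*)\Pi_1=\widehat{Z}\bigl(\begin{smallmatrix}\widetilde{T}_{[11]}^*\\ \0\end{smallmatrix}\bigr)$. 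Assembly is then mechanical: substituting $\widetilde{R}+\delta\widetilde{R}$ for $\widetilde{R}$ in the chopped Step 2 identity, rearranging with the block permutation $\bigl(\begin{smallmatrix}\Pi_1 & \0\\ \0 & \Id_{m-k}\end{smallmatrix}\bigr)$, and moving $\delta\widetilde{R}$ to the left reproduces exactly the decomposition stated in the theorem, with the third backward term $\widehat{Q}\bigl(\begin{smallmatrix}\delta\widetilde{R}\\ \0\end{smallmatrix}\bigr)\widetilde{P}^T$ accounting for the propagation of the Step 3 error back to $A$. Because $\widehat{Q}$ is unitary (acting from the left it preserves column 2-norms) and $\widetilde{P}^T$ is a column permutation, the column-wise bound on $\delta\widetilde{R}$ is inherited by this third term; finally, $\|\widetilde{R}(:,i)\|_2\leq(1+\varepsilon_1)\|A(:,i)\|_2$, a direct consequence of the Step 2 column-wise bound, converts the constant $\varepsilon_3\|\widetilde{R}(:,i)\|_2$ into the stated $\varepsilon_3(1+\varepsilon_1)\|A(:,i)\|_2$.

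The main obstacle I anticipate is careful bookkeeping of the four permutations ($P_2,P$ in Step 2 and $\Pi_2,\Pi_1$ in Step 3) together with the two exactly-unitary surrogates $\widehat{Q},\widehat{Z}$, so that column-wise bounds in the intermediate factor $\widetilde{R}$ survive the lift back to $A$ and land on the correct columns in the final decomposition. A related subtlety is that $\widehat{Q}$ has effectively $k$ columns acting on $\widetilde{R}$ while its trailing $m-k$ columns multiply zero blocks; one has to verify that the surrogate used in Step 2 is compatible with the block structure $\bigl(\begin{smallmatrix}\Pi_1 & \0\\ \0 & \Id_{m-k}\end{smallmatrix}\bigr)$ that conjugates it in Step 4 of Algorithm \ref{a:COD}. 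Once these bookkeeping points are settled, the rest is routine chaining of the standard Householder QR bounds already recorded in \S\ref{SS=QRF-review} and \S\ref{SSS=QRCP}.
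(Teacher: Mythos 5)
Your proposal is correct and follows essentially the same route as the paper's own proof: both chain the column-wise backward bound (\ref{QRCP-backward-error-columns}) for the first pivoted QR, the truncation perturbation (\ref{eq:DA-tilde})--(\ref{eq:DA-R22}), and the Cox--Higham complete-pivoting analysis (\ref{eq:dA-rows-1})--(\ref{eq:dA-rows-2}) applied to $\widetilde{R}^*$, then push $\delta\widetilde{R}$ back through $\widehat{Q}$ and convert $\|\widetilde{R}(:,i)\|_2\leq(1+\epsilon_1)\|A(:,i)\|_2$ to close the bound. Your remark that the row-wise backward error for $\widetilde{R}^*$ is exactly the column-wise backward error needed for $\widetilde{R}$ is precisely the ``key observation'' the paper makes, and the algebraic assembly you describe reproduces the displayed identity.
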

 \begin{proof}
 	For the first step we have the relation (\ref{QRCP-backward-error-chopped}), where the numerical rank $k$ is determined using a suitable thresholding. 
 	Here it is important to note that $\delta A$ satisfies (\ref{QRCP-backward-error-columns}), and that $\Delta A$ is zero at the $k$ leading pivotal columns selected by $\widetilde{P}$ (see (\ref{eq:DA-tilde})).
 	
 	Set $\widetilde{R}=\begin{pmatrix}
 	\widetilde{R}_{[11]}&\widetilde{R}_{[12]}
 	\end{pmatrix}\in\mathbb{C}^{k\times n}$
 	%
 	and compute the QR factorization of $\widetilde{R}^*$. Here, we deploy complete pivoting for the following reason. We want the overall backward error to be small in each column of $A$ (or at least in the $k$ most important pivotal columns), relative to the norm of that particular column.
To that end, first note that the computed factors $\widetilde{Z}$, $\widetilde{T}^*_{[11]}$ satisfy 
 	\begin{equation}\label{eq:LQ-R}
 	(\widetilde{R} + \delta \widetilde{R})^*\Pi_1 = \Pi_2^T\widehat{Z} \left(\begin{smallmatrix}
 	\widetilde{T}^*_{[11]}\\
 	\0
 	\end{smallmatrix}\right),
 	\end{equation}
where $\widehat{Z}$ is unitary, $\|\widetilde{Z} - \widehat{Z}\|_F \leq \epsilon_2$ and,
 	inserting (\ref{eq:LQ-R}) in (\ref{QRCP-backward-error-chopped}), we obtain 
 	\begin{equation*}
 	(A+\delta A + \Delta A + \widehat{Q} \left(\begin{smallmatrix} \delta\widetilde{R} \cr \0 \end{smallmatrix}\right) \widetilde{P}^T) = \widehat{Q} \left(\begin{smallmatrix} \Pi_1 & \0 \cr \0 & \Id_{m-k} \end{smallmatrix}\right)\left(\begin{smallmatrix}
 	\widetilde{T}_{[11]} & \0\\
 	\0 & \0 
 	\end{smallmatrix}\right)\widehat{Z}^*\Pi_2 \widetilde{P}^T,
 	\end{equation*}
 	where $\delta A$ is from (\ref{QRCP-backward-error}, \ref{QRCP-backward-error-columns}), $\Delta A$ is as in (\ref{eq:DA-tilde}). 
 Now, the key observation is that by (\ref{eq:dA-rows-1}, \ref{eq:dA-rows-2}), we can conclude that $\|\delta \widetilde{R}(:,i)\| \leq \epsilon_3\|\widetilde{R}(:,i)\|$, where, by (\ref{QRCP-backward-error}) and (\ref{QRCP-backward-error-columns}), the column norms of $\widetilde{R}$ are bounded by $\|\widetilde{R}(:,i)\| \leq (1+\epsilon_1) \|A(:,i)\|_2$. 
 	
 	Hence, the $k$ pivotal columns of $A$ (as selected by $\widetilde{P}$) have, individually, small backward errors of the type (\ref{QRCP-backward-error-columns}). Note that the complete pivoting in (\ref{eq:LQ-R}) is essential for column-wise small backward error in $\widetilde{R}$ and thus in $A$.
 \end{proof}
 
 
\subsection{Backward error in rank revealing QR factorizations of $M$ and $K$}
\label{SS=RRQR-Back-err}
The tool of trade in \texttt{quadeig}--type preprocessing is the pivoted QR factorization
\begin{equation}\label{QRCP_M}
(P_{r,M}M) \Pi_M = Q_M R_M,\;\; 
R_M = \left(\begin{smallmatrix} * & * & * & *  \cr
0 & * & * & *  \cr
0 & 0 & * & *  \cr
0 & 0 & 0 & 0  \cr
\end{smallmatrix}\right) = \left(\begin{smallmatrix} \widehat{R}_M \cr \0_{n-r_M,n} \end{smallmatrix}\right) ,
\end{equation}
The initial row sorting before the column pivoted QR factorization is indicated by the matrices $P_{r,M}$, $P_{r,K}$. 
In the absence of row sorting  $P_{r,M}=P_{r,K}=\Id_n$.

In finite precision (see \S \ref{SS=QRF-review})  the computed matrices $\widetilde{Q}_M$, $\widetilde{R}_M$, $\widetilde{\Pi}_M$ satisfy, independent of the choice of the permutation matrix $P_{r,M}$, 
$P_{r,M} (M + \delta M)\widetilde{\Pi}_M = \widehat{Q}_M \widetilde{R}_M$,
where $\widehat{Q}_M=\widetilde{Q}_M+\delta\widetilde{Q}_M$ is exactly unitary with 
\begin{equation}
\|\delta\widetilde{Q}_M\|_F \equiv \|\widehat{Q}_M -\widetilde{Q}_M\|_F \leq \epsilon, \;\mbox{and}\;\; 
\|\delta M(:,i)\|_2 \leq \epsilon_{qr} \|M(:,i)\|_2,\;\;i=1,\ldots, n.
\end{equation}
If $P_{r,M}$ is row sorting permutation, then, in addition, 
$\|\delta M(i,:)\|_2 \leq \epsilon_{qr}^{\rightrightarrows} \|M(i,:)\|_2$, for $i=1,\ldots, n$.
Since $P_{r,M}$ is unitary, we can absorb it into $\widetilde{Q}_M$ and $\widehat{Q}_M$ and redefine  $\widetilde{Q}_M := P_{r,M}^T \widetilde{Q}_M$, 
$\widehat{Q}_M := P_{r,M}^T \widehat{Q}_M$ and write
\begin{equation}\label{eq:QRF(M):backward-2}
(M + \delta M)\widetilde{\Pi}_M = \widehat{Q}_M \widetilde{R}_M
\end{equation} 
\noindent Analogous statements 
hold for the QRF $(P_{r,K}K) \Pi_K = Q_K R_K$, $R_K=\left(\begin{smallmatrix} \widehat{R}_K \cr \0_{n-r_K,n} \end{smallmatrix}\right)$.

\subsubsection{Backward error induced by truncation}\label{SSS=Truncation-Back-Err}
If we truncate the triangular factor in an attempt to infer the numerical rank, we must push the truncated part into the backward error, as in (\ref{eq:A_k:QRCP}). This changes the backward error structure, and the  new error bounds depend on the truncation strategy and the threshold $\tau$.

Assume in (\ref{eq:QRF(M):backward-2}) that we can partition $\widetilde{R}_M$ as 
$$
\widetilde{R}_M = \begin{pmatrix} (\widetilde{R}_M)_{[11]} &  (\widetilde{R}_M)_{[12]} \cr \0_{n-k,k} & (\widetilde{R}_M)_{[22]}\end{pmatrix},\;\;\mbox{where the $(n-k)\times(n-k)$ block  $(\widetilde{R}_M)_{[22]}$ "is small"}.
$$
Then we can write a backward perturbed rank revealing factorization
\begin{equation}\label{eq:M:truncate:backw}
( M + \delta M + \underbrace{\widehat{Q}_M \begin{pmatrix} \0 & \0 \cr \0 & -(\widetilde{R}_M)_{[22]}\end{pmatrix}\widetilde{\Pi}_M^T}_{\Delta M} ) \widetilde{\Pi}_M = \widehat{Q}_M \begin{pmatrix} (\widetilde{R}_M)_{[11]} &  (\widetilde{R}_M)_{[12]} \cr \0_{n-k,k} & \0_{n-k,n-k}\end{pmatrix} .
\end{equation}
Obviously, $\Delta M$ is zero at the $k$ "most linearly independent" columns of $M$ selected by the pivoting, $(\Delta M)\widetilde{\Pi}_M(:,1:k)=\0_{n,k}$. At the remaining $n-k$ columns we have
\begin{equation}
\| (\Delta M)\widetilde{\Pi}_M(:,k+j)\|_2 = \|(\widetilde{R}_M)_{[22]}(:,j) \|_2 \leq \zeta |((\widetilde{R}_M)_{[22]})_{k+1,k+1}|,
\end{equation}
where $\zeta=1+O(n\roff)$ is a correction factor because the pivoting in floating point arithmetic cannot enforce the conditions (\ref{QRCP}) exactly.
Consider the following choices of $k$:
\begin{itemize}
	\item[\framebox{1.}] $k$ is the first index for which $|((\widetilde{R}_M)_{[22]})_{k+1,k+1}| \leq \tau |((\widetilde{R}_M)_{[22]})_{k,k}|$. In that case
	\begin{equation}\label{eq:truncation:back:type1}
	\max_{j=1:n-k}\| (\Delta M)\widetilde{\Pi}_M(:,k+j)\|_2 \leq \zeta \tau |((\widetilde{R}_M)_{[22]})_{k,k}| \leq \zeta \tau \min_{i=1:k}\| (M+\delta M)\widetilde{\Pi}_M(:,i)\|_2 .
	\end{equation}	
	\item[\framebox{2.}] $k$ is the first index for which $|((\widetilde{R}_M)_{[22]})_{k+1,k+1}| \leq \tau\cdot computed(\|M\|_F)$. In that case
	\begin{equation}
	\max_{j=1:n-k}\| (\Delta M)\widetilde{\Pi}_M(:,k+j)\|_2 \leq \tau \cdot computed(\|M\|_F) .
	\end{equation}	
	\item[\framebox{3.}] $k$ is the first index with
	$|((\widetilde{R}_M)_{[22]})_{k+1,k+1}| \leq \tau\cdot computed(\max\{ \|M\|_F, \|C\|_F, \|K\|_F\})$.
	In that case
	$\max_{j=1:n-k}\| (\Delta M)\widetilde{\Pi}_M(:,k+j)\|_2 \leq \tau \cdot computed(\max\{ \|M\|_F, \|C\|_F, \|K\|_F\})$.
	This strategy is used in \texttt{quadeig} with $\tau = n \roff$.
\end{itemize}

\begin{remark}
	{
		If the truncation strategy \boxed{3.}
		is deployed, as in \texttt{quadeig},
		it is necessary to assume that the coefficient matrices have been scaled so that their norms are nearly equal. Otherwise, such a truncation strategy may discard a block in $\widetilde{R}_M$ (and declare $M$ numerically rank deficient) because it is small as compared e.g. to $\|C\|_F$ or $\|K\|_F$. 
		On the other hand, in \texttt{quadeig}, scaling the matrices is optional, and if the (also optional) deflation procedure is enabled, this opens a possibility for catastrophic error (severe underestimate of the numerical ranks) if the matrices are not scaled and if their norms differ by orders of magnitude.  	
	}
\end{remark}

 \section{Kronecker's canonical form for general pencils}\label{s:kronecker}
 Canonical (spectral) structure of a matrix pencil $A-\lambda B$ is, through linearization, an extremely powerful tool for the analysis of quadratic pencils $Q(\lambda) = \lambda^2M + \lambda C + K$. In particular, since the second companion form (\ref{linearization-C2})
 is strong linearization, the partial multiplicities, and thus the structure of all eigenvalues (including infinity), are preserved. In a numerical algorithm for the QEP,  it is desirable to remove the zero and infinite eigenvalues as early as possible and, thus, canonical structure can be used to guide such a preprocessing step.
 
 \subsection{Van Dooren's algorithm}\label{SS=VD-KCF}
 In this section, we briefly review the numerical algorithm by Van Dooren \cite{van1979computation}, developed for the computation of the structure of eigenvalue $\lambda$ for a general $m\times n$ pencil $A-\lambda B$, i.e. the number and the orders of the Jordan blocks for $\lambda$.
%
%
 Here, however, we focus our attention only on computing the structure of the eigenvalue $\lambda=0$. Notice that for the infinite eigenvalue one can reverse the pencil. For an arbitrary finite eigenvalue, a suitably shifted pencil is used; see \cite{van1979computation}.

 For the sake of completeness and later references, we briefly describe the main steps of the staircase reduction for the zero eigenvalue. The pencil $A-\lambda B$ is assumed regular (thus square, $n\times n$), and $\lambda=0$ is assumed to be among its eigenvalues.
 
 \begin{itemize}
 	\item[1.] Compute the singular value decomposition of $A$:
 	$A = U_A\Sigma_AV^*_A$,
 	and let $s_1 = n-\rank(A)$. (Since zero is assumed to be an eigenvalue of $A-\lambda B$, $A$ must be column rank deficient.) Note that $AV_A = \left(\begin{array}{c | c}
 	A_2 & \0_{n\times s_1}
 	\end{array}\right)$, where $A_2$ is of full column rank $n-s_1$. Partition $BV_A = \left(\begin{array}{c | c}
 	B_2 & B_1
 	\end{array}\right)$ in the compatible manner. If we multiply the pencil by $V_A$ from the right we get
 	$(A-\lambda B)V_A = \left(\begin{array}{c | c}
 	A_2-\lambda B_2 & -\lambda B_1
 	\end{array}\right)$.
 	\item [2.] Compute the SVD 
 	$B_1 = U_B\Sigma_B V^*_B,\;\;\mbox{and set}\;\; U^*_B B_1 = \left(\begin{array}{c}
 	B_{1,1} \cr \hline
 	\0_{n-s_1\times s_1}
 	\end{array}\right).$
 	Multiply the pencil $(A-\lambda B)V_A$ by $U^*_B$ from the left to get
 	\begin{equation}\label{pencilUB1}
 	U^*_B(A-\lambda B)V_A = \left(\begin{array}{c | c}
 	A_{2,1} - \lambda B_{2,1} & -\lambda B_{1,1}\cr \hline
 	A_{2,2} - \lambda B_{2,2} & \0_{n-s_1\times s_1}
 	\end{array}\right).
 	\end{equation}
 	
 	\item[3.] Let $P_B$ be the permutation matrix that swaps the row blocks in the above partition. Thus, we have unitary matrices $P_1 = P_BU^*_B$, $Q_1 = V_A$ so that
 	\begin{equation}\label{pencilDeflated1}
 	P_1(A-\lambda B)Q_1 = \left(\begin{array}{c|c}
 	A_{2,2} - \lambda B_{2,2} & \0_{n-s_1\times s_1}\cr \hline
 	A_{2,1} - \lambda B_{2,1} & -\lambda B_{1,1}
 	\end{array}\right),
 	\end{equation}
 	where 
 	$
 	\left(\begin{smallmatrix}
 	A_{2,2} \cr A_{2,1}
 	\end{smallmatrix}\right) = P_1 A_2 \in  \mathbb{C}^{n\times (n-s_1)}
 	$
 	is of full column rank.
 \end{itemize}
 This concludes the first step of the algorithm. Note that he rank of  $B_1$ is $s_1$ (full column rank) since the initial matrix pencil is assumed regular, and
 $$
 |\det P_1 \det(A-\lambda B) \det Q_1| =|\det(A-\lambda B)| =|\lambda|^{s_1}| {|\det B_{1,1}|} |\det(A_{2,2}-\lambda B_{2,2})|,
 $$
 $\det B_{1,1}\neq 0$, which clearly exposes $s_1$ copies of zero in the spectrum, and reduces the problem to the pencil $A_{2,2}-\lambda B_{2,2}$ of lower dimension $n_2=n-s_1$. Also, we see how rank deficiency in $B_{1,1}$ implies singularity of the input pencil. Clearly, if $A_{22}$ is nonsingular, zero has been exhausted from the spectrum of $A-\lambda B$. 
 
 Otherwise, in the next step, we  repeat the described procedure on the $n_2\times n_2$ pencil $A_{2,2}-\lambda B_{2,2}$ to obtain unitary matrices $\widehat{P}_2$, $\widehat{Q}_2$ so that\footnote{Here we abuse the notation slightly. In any block partitioned pencil, the pair of indices $(i,j)$ is interpreted as follows: $i$ is the column index in a block partition, counted from the right to the  left; $j$ is the row index, counted bottom up, $(\leftarrow\mbox{column},\uparrow\mbox{row})$. Each such index pair applies to the current stage in the algorithm. So, $A_{2,1} - \lambda B_{2,1}$ in (\ref{eq:alg-step-3}) is the $(\leftarrow 2,\uparrow 1)$ position in $P_2P_1(A - \lambda B)Q_1Q_2$, which is different from $A_{2,1} - \lambda B_{2,1}$ in (\ref{pencilDeflated1}). This notation reflects the direction of building the final result, from bottom right up.}
 \begin{equation}\label{eq:alg-step-3}
 P_2P_1(A - \lambda B)Q_1Q_2 = \left(\begin{array}{c|c|c}
 A_{3,3} - \lambda B_{3,3} & \0_{n_3\times s_2} & \0_{n_3\times s_1}\cr \hline
 A_{3,2} - \lambda B_{3,2} & -\lambda B_{2,2} & \0_{s_2\times s_1}\cr \hline
 A_{3,1} - \lambda B_{3,1} & A_{2,1} - \lambda B_{2,1} & -\lambda B_{1,1}
 \end{array}\right),
 \end{equation}
 where $P_2 = \diag(\widehat{P}_2, \Id_{s_1}),Q_2 = \diag(\widehat{Q}_2, \Id_{s_1})$, and $s_2=n_2-\mathrm{rank}(A_{22})$, $n_3=n_2-s_2$. As in the first step, $B_{2,2}$ is nonsingular, and $\left(\begin{smallmatrix} A_{3,3}\cr A_{3,2}\end{smallmatrix}\right)$ is of full column rank. Furthermore, since
 $\left(\begin{smallmatrix} \0_{n_3+s_2,s_2}\cr A_{2,1}\end{smallmatrix}\right)$ is a column block in the full column rank matrix, $A_{2,1}$ must have full column rank as well.
  %
 This procedure is repeated until in an $\ell$th step we obtain 
 
 
  \begin{equation} \label{eq:SVDKronecker}
  P(A-\lambda B)Q = \left(\begin{array}{c |c|c|c|c}
  A_{\ell+1,\ell+1} - \lambda B_{\ell+1,\ell+1} & \0_{n_{\ell+1}\times s_\ell} & \ldots & \0_{n_{\ell+1}\times s_2} & \0_{n_{\ell+1}\times s_1}\cr \hline
  A_{\ell+1,\ell} - \lambda B_{\ell+1,\ell} & -\lambda B_{\ell,\ell} & \ldots & \0_{s_{\ell}\times s_2} & \0_{s_{\ell}\times s_1} \cr \hline
  \vdots & \vdots & \ddots & \vdots & \vdots \cr \hline
  A_{\ell+1,2}-\lambda B_{\ell+1,2} & A_{\ell,2} -\lambda B_{\ell,2} & \ddots & -\lambda B_{2,2} & \0_{s_2,s_1}\cr \hline
  A_{\ell+1,1}-\lambda B_{\ell+1,1} & A_{\ell,1} -\lambda B_{\ell,1} & \ddots & A_{2,1}-\lambda B_{2,1} & -\lambda B_{1,1} 
  \end{array}\right),
  \end{equation}
 with nonsingular $A_{\ell+1,\ell+1}$. Then, by design,  $B_{i,i}$ has full rank $s_i$ for $i=1,\ldots,\ell$, and $A_{i,i-1}$ has full column rank $s_i$ for $i=2,\ldots,\ell$.
See  \cite[Algorithm 3.1]{van1979computation}.
%
%
 \begin{proposition}\cite[Proposition 3.5]{van1979computation}\label{t:KroneckerZero}
 	The indicies $s_j$ given by Algorithm \cite[Algorithm 3.1]{van1979computation} completely determine the structure at $\lambda=0$ of the pencil $A-\lambda B$, i.e. $A-\lambda B$ has $s_j-s_{j+1}$ elementary divisors $\lambda^j$, $j=1,\ldots,\ell$.
 \end{proposition}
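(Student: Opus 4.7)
The plan is to verify the count of elementary divisors by combining two observations: the nonsingular top-left block contributes nothing to the zero eigenvalue structure, and the remaining staircase subpencil is strictly equivalent to a direct sum of nilpotent Jordan blocks whose sizes are read directly off the sequence $s_1,\ldots,s_\ell$.

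First, since $A_{\ell+1,\ell+1}$ is nonsingular, the block $A_{\ell+1,\ell+1} - \lambda B_{\ell+1,\ell+1}$ is invertible at $\lambda=0$ and therefore carries no elementary divisors $\lambda^j$. Computing the determinant of the block lower triangular form (\ref{eq:SVDKronecker}) as a product of block-diagonal determinants yields a factor $(-\lambda)^{N}$ with $N=s_1+\cdots+s_\ell$ times a constant nonzero at $\lambda=0$, confirming that the total degree of the elementary divisors at $\lambda=0$ equals $N$ and that all of them originate in the $N\times N$ subpencil $\mathcal{P}(\lambda)$ obtained by deleting the top-left block.

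Next, I would reduce $\mathcal{P}(\lambda)$ to a direct sum of nilpotent Jordan pencils by successive strict equivalence transformations. Using the nonsingularity of $B_{i,i}$, I would first left-normalize each diagonal block to $-\lambda \Id_{s_i}$. Using the full column rank of the subdiagonal block $A_{i,i-1}\in\mathbb{C}^{s_{i-1}\times s_i}$, I would split the $s_{i-1}$-dimensional row block at level $i-1$ into an $s_i$-dimensional \emph{chained} part (the image of $A_{i,i-1}$, which can be normalized to $\Id_{s_i}$) and an $(s_{i-1}-s_i)$-dimensional \emph{terminating} part. Simultaneously eliminating the lower-triangular fill blocks $A_{i,j}$ and $B_{i,j}$ for $i-j\geq 2$ by block column and row operations that preserve the staircase shape produces a direct sum of pencils of the form $\lambda \Id_j - J_j$, where $J_j$ is the $j\times j$ nilpotent Jordan block. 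Tracing the chains gives exactly $s_\ell$ Jordan blocks of size $\ell$ (chains that pass through all $\ell$ levels), and at each intermediate level $j$ exactly $s_j-s_{j+1}$ new chains terminate, producing $s_j - s_{j+1}$ Jordan blocks of size $j$; this translates directly into $s_j-s_{j+1}$ elementary divisors $\lambda^j$ for $j=1,\ldots,\ell$ (with the convention $s_{\ell+1}=0$).

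The hard part will be showing that the fill blocks $A_{i,j}, B_{i,j}$ with $i-j\geq 2$ can in fact be eliminated without disturbing the previously normalized diagonal and first-subdiagonal blocks; this requires an inductive argument that at step $j$ the required elimination can be absorbed by a suitable modification further down the staircase, exploiting precisely the full-column-rank property of each $A_{i,i-1}$. If this bookkeeping becomes cumbersome, an equivalent route is to compute the local Smith form of $\mathcal{P}(\lambda)$ at $\lambda=0$: the ranks $s_j$ appearing in the staircase are, by construction, the ranks of the relevant matrix tails of $\mathcal{P}(\lambda)$ and its derivatives at $\lambda=0$, and the classical formula expressing partial multiplicities as successive differences of these ranks reproduces the same conclusion $s_j - s_{j+1}$.
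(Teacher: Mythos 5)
The paper does not actually prove this proposition: it is stated as a cited result, \cite[Proposition 3.5]{van1979computation}, and the proof lives entirely in Van Dooren's paper. So there is no internal proof to compare against, and I can only assess your sketch on its own merits.

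Your determinant computation is correct and does establish that the total algebraic multiplicity of $\lambda=0$ is $s_1+\cdots+s_\ell$, and the picture of chains passing through successive staircase levels, with $s_j-s_{j+1}$ of them terminating at level $j$, is the right mental model (it is exactly the statement that the $s_j$ form the Weyr characteristic of the nilpotent part at $\lambda=0$). But the step you flag as ``the hard part'' is a genuine gap, not bookkeeping. Strict equivalence of pencils allows only \emph{constant} invertible $P$, $Q$. A block column operation adding $T$ times column block $i-1$ to column block $j$ changes the fill block $A_{i,j}-\lambda B_{i,j}$ to $(A_{i,j}+A_{i,i-1}T)-\lambda(B_{i,j}+B_{i,i-1}T)$, so killing \emph{both} the constant and the linear part of a fill block with a single $T$ imposes two independent linear conditions that a single $T$ cannot in general satisfy; full column rank of $A_{i,i-1}$ lets you kill the constant part, but you then create new $\lambda$-fill that you must chase. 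Making this work requires a careful two-sided induction that simultaneously exploits nonsingularity of all the $B_{i,i}$ and full column rank of all the $A_{i,i-1}$, and it is essentially a re-proof of the Weierstrass form restricted to the $\lambda=0$ part, which you have not carried out. Your fallback (``local Smith form at $\lambda=0$ and ranks of relevant matrix tails'') is closer to Van Dooren's actual argument, but as written it is too vague to count as a proof: you would need to specify precisely which block Toeplitz matrices built from the coefficients of $A-\lambda B$ have their ranks equal to partial sums of the $s_j$, and verify that the unitary staircase transformations preserve those ranks. Until one of those two routes is carried out in full, the argument is an outline, not a proof.
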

\noindent Finally, we can conclude that this algorithm also determines the structure of zero eigenvalue for the quadratic eigenvalue problem via a (strong) linearization.
 \begin{theorem}\label{t:deflation}
 	If 
 	\cite[Algorithm 3.1]{van1979computation} is  applied to the pencil (\ref{linearization-C2}), then it completely determines the structure of the eigenvalue zero for the quadratic eigenvalue problem $Q(\lambda)x \equiv  (\lambda^2M + \lambda C + K)x=\0$.
 \end{theorem}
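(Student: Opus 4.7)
My plan is to reduce the statement to Proposition \ref{t:KroneckerZero} via the classical fact that the second companion form (\ref{linearization-C2}) is a \emph{strong} linearization of $Q(\lambda)$. Concretely, there exist unimodular matrix polynomials $E(\lambda)$, $F(\lambda)$, and correspondingly $\widetilde{E}(\lambda)$, $\widetilde{F}(\lambda)$ for the reversed pencil $\mathrm{rev}\, C_2(\lambda)$, such that
\begin{equation*}
E(\lambda)\, C_2(\lambda)\, F(\lambda) = \begin{pmatrix} Q(\lambda) & \0 \cr \0 & \Id_n \end{pmatrix},
\end{equation*}
and analogously for the reversal. This standard identity implies that $C_2(\lambda)$ and $Q(\lambda)$ share the same finite elementary divisors (and the same partial multiplicities at infinity). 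In particular, the Jordan structure at $\lambda = 0$, i.e.\ the number and sizes of the elementary divisors $\lambda^{j}$, is identical for $C_2(\lambda)$ and $Q(\lambda)$.

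Next, I would verify that Van Dooren's reduction as described in \S\ref{SS=VD-KCF} can legitimately be applied to $A - \lambda B = C_2(\lambda)$. The algorithm requires that the pencil be regular, which is automatic here: a direct computation gives $\det C_2(\lambda) = \pm \det Q(\lambda)$, so regularity of $Q$ transfers to $C_2$. Once the algorithm is executed on $C_2(\lambda)$, Proposition \ref{t:KroneckerZero} yields indices $s_1 \geq s_2 \geq \cdots \geq s_\ell$ that completely determine the structure of $\lambda = 0$ for $C_2(\lambda)$, specifically producing $s_j - s_{j+1}$ elementary divisors of the form $\lambda^{j}$ for each $j = 1, \ldots, \ell$ (with the convention $s_{\ell+1} = 0$).

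Combining the two ingredients: the indices $s_j$ returned by the algorithm characterize the $\lambda = 0$ structure of $C_2(\lambda)$, and the strong-linearization identity transports that structure unchanged to $Q(\lambda)$. Therefore the very same $s_j$ determine the number and sizes of Jordan blocks at $\lambda = 0$ of the quadratic pencil, which is exactly the assertion of the theorem.

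The step I expect to require the most care is not computational but bookkeeping: one must confirm that \cite[Algorithm 3.1]{van1979computation} applied to the $2n \times 2n$ pencil $C_2(\lambda)$ produces indices that match those one would obtain directly from the partial multiplicities of $Q(\lambda)$ at zero. This is immediate once the equality of elementary divisors is in hand, because the $s_j$ are themselves intrinsic invariants of the $\lambda=0$ local Smith form; no additional quantitative estimate is needed.
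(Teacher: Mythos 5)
Your argument is correct and coincides with the one the paper implicitly relies on: the paper offers no separate proof for Theorem~\ref{t:deflation}, instead appealing in the surrounding prose to the fact that (\ref{linearization-C2}) is a strong linearization, so elementary divisors at $\lambda=0$ (and at infinity) transfer unchanged between $Q(\lambda)$ and $C_2(\lambda)$, and then Proposition~\ref{t:KroneckerZero} applied to $C_2(\lambda)$ gives the structure. Your proposal simply spells out this chain of reasoning — unimodular equivalence, regularity via $\det C_2(\lambda)=\det Q(\lambda)$, and the invariance of the $s_j$ — so it is a correct, slightly more explicit rendering of the paper's intended justification.
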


\subsection{Computing Kronecker's Canonical form using rank revealing QR factorization}\label{SS=KCF-QR}
It is clear that the key rank reveling in the Van Dooren's algorithm can be done using the rank revealing QR factorization instead of the SVD. In \S \ref{SS=QRF-review} we argued that in practice rank revealing QR factorization is considered as reliable as the SVD, and that it may even have some advantages.
If we replace the SVD $A = U_A \Sigma_A V_A^*$ with $A = Q_A R_A P_A^T$, then the post-multiplication  by $V_A$ in the procedure described in \S \ref{SS=VD-KCF} is replaced with column permutation, i.e. post-multiplication with the permutation matrix $P_A$, which is error free even in finite precision arithmetic. Also, we can rearrange the elimination process to obtain an upper block triangular matrix as follows.


\begin{itemize}
	\item[1.] Compute the rank revealing factorization of $A_{1,1} = A$
	$A_{1,1}P_A = Q_AR_A$,
	and denote $s_1 = n_1- \rank(A) = n - \rank(A)$. Now, $Q_A^*A_{1,1} = \left(\begin{array}{c}
	A_2\cr \hline
	\0_{s_1\times n}
	\end{array}\right)$. Partition $Q^*_AB = \left(\begin{array}{c}
	B_2\cr \hline
	B_1
	\end{array}\right)$	in compatible manner. Multiply the pencil $(A-\lambda B)$ by $Q^*_A$ on the left to get
	\begin{equation}\label{eq:rrQRKornecker1}
	Q^*_A(A-\lambda B) = \left(\begin{array}{c}
	A_2 - \lambda B_2\cr \hline
	\lambda B_1
	\end{array}\right).
	\end{equation}
	\item[2.] Compute complete orthogonal decomposition of $B_1$, 
	$B_1 = U_BR_BV^*_B$.
	The column rank of $B_1$ is $s_1$, if the matrix pencil is regular, and $B_1V_B = \left(\begin{array}{c| c}
	B_{1,1} & \0_{s_1,n-s_1}
	\end{array}\right)$, where $B_{1,1}$ is upper triangular. Multiply the pencil (\ref{eq:rrQRKornecker1}) by $V_B$ on the right to get
	$Q_A^*(A-\lambda B)V_B = \left(\begin{array}{c| c}
	A_{1,2} - \lambda B_{1,2} & A_{2,2} - \lambda B_{2,2}\cr \hline
	\lambda B_{1,1} & \0
	\end{array}\right)$.
	\item[3.] Let $P_B$ be the permutation matrix for permuting the $s_1$ and $n-s_1$ column blocks. Define $P_1 = Q^*_A$ and $Q_1 = V_BP_B$. The first Jordan block for the eigenvalue $\lambda=0$ is deflated by the following orthogonal transformation:
	\begin{equation}\label{eq:rrQRKronecker2}
	P_1(A-\lambda B)Q_1 = \left(\begin{array}{c |c}
	A_{2,2} -\lambda B_{2,2} & A_{1,2} - \lambda B_{1,2}\cr \hline
	\0 & \lambda B_{1,1}
	\end{array}\right).
	\end{equation}
\end{itemize}
This reduction continues analogously to \S \ref{SS=VD-KCF}, but keeping the upper triangular structure; the details are in Algorithm \ref{a:Deflation0}. 
\begin{algorithm}[h!]
	\caption{Deflation of the eigenvalue $\lambda=0$ of the $n\times n$ pencil $A-\lambda B$}
	\label{a:Deflation0}
	\begin{algorithmic}[1]
		\STATE $j=1;$ $A_{1,1} = A;$ $B_{1,1} = B;$ $n_1 = n;$\;
		\STATE $s_1=n-\mathrm{rank}(A_{1,1})$
		\WHILE{$s_i>0$}
		\STATE Compute rank revealing QR: $A_{j,j}P_A = Q_AR_A$\;
		\STATE Partition matrices: $\left(\begin{array}{c}
		A_{j+1}\\
		\0
		\end{array}\right) = Q^*_AA_{j,j}$, $\left(\begin{array}{c}
		B_{j+1}\\
		B_{j}
		\end{array}\right) = Q^*_AB_{j,j}$\;
		\STATE Update and partition the blocks in the row $j$
		\FOR{$i=1:j-1$}
		\STATE $\left(\begin{array}{c}
		A_{i,j+1} \\
		A_{i,j}
		\end{array}\right) = Q^*_AA_{i,j};$ $\left(\begin{array}{c}
		B_{i,j+1} \\
		B_{i,j}
		\end{array}\right) = Q^*_AB_{i,j};$
		\ENDFOR
		\STATE Compute complete orthogonal decomposition of the $s_j \times n_j$ matrix $B_j$: $B_j = A_BR_BV^*_B$\;
		\STATE Compress $B_j$ to full column rank, permute and parititon:
		\STATE $\left(\begin{array}{c c}
		A_{j+1,j+1} & A_{j,j+1}
		\end{array}\right) = A_{j+1}V_BP_B;$ $\left(\begin{array}{c c}
		B_{j+1,j+1} & B_{j,j+1}
		\end{array}\right) = B_{j+1}V_BP_B$\;
		\STATE $\left(\begin{array}{c c}
		\0 & B_{j,j}
		\end{array}\right) = B_{j}V_BP_B$\;
		\STATE $n_{j+1} = n_j-s_j$, $j=j+1$
		\STATE Compute the rank revealing QR factorization $A_{j,j}P_A = Q_A R_A$
		\STATE $s_j = n_j-\mathrm{rank}(A_{j,j})$
		\ENDWHILE
	\end{algorithmic}
\end{algorithm}
The final form of $P(A-\lambda B)Q$ is given in (\ref{eq:SVDKroneckerr}). The deflation of infinite eigenvalues can be done by the same algorithm, but with the reversed pencil $B-\lambda A$.
\begin{equation}\label{eq:SVDKroneckerr}
 \left(\begin{array}{c| c| c| c| c}
A_{\ell+1,\ell+1} - \lambda B_{\ell+1,\ell+1} & A_{\ell,\ell+1} - \lambda B_{\ell,\ell+1} & \ldots & A_{2,\ell+1} - \lambda B_{2,\ell+1} & A_{1,\ell+1} -\lambda B_{1,\ell+1}\cr \hline
\0 &  -\lambda B_{\ell,\ell} & \ldots & A_{2,\ell} - \lambda B_{2,\ell} & A_{1,\ell} -\lambda B_{1,\ell}\cr \hline
\vdots & \vdots & \ddots & \vdots & \vdots \cr \hline
\0 & \0 & \ldots & -\lambda B_{2,2} & A_{1,2}-\lambda B_{1,2}\cr \hline
\0 & \0 & \ldots & \0 & -\lambda B_{1,1}
\end{array}\right)
\end{equation}
\begin{remark}
For stronger backward stability, the rank revealing QR factorizations  can be computed with full pivoting, e.g. $P_{A_r}A_{j,j}P_A = Q_A R_A$. The rest of the algorithm is changed simply by setting $Q_A\leftarrow P_{A,r}^T Q_A$, cf. Remark \ref{REM:QRCPP}. The complete orthogonal decomposition is computed as described in \S \ref{ss:COD}.
\end{remark}

\section{Scaling and balancing}\label{S=S+B}
Backward stable computation of the eigenvalues of the linearized pencil (\ref{linearization-C2}) allows interpretation of the computed output as the exact spectral information of a pencil $A+\delta A - \lambda (B+\delta B)$, where $\delta A$ ($\delta B$) is small relative to $A$ ($B$) in an appropriate norm. Since $\delta A$, $\delta B$ are not structured (linear eigensolver that takes $A$ and $B$ on input is oblivious of the block structure and the original quadratic problem), the backward stability does not necessarily hold in terms of the original data $M$, $C$, $K$. In fact, it is easily seen that one potential source of the problem is when there is large variation in the norms of $M$, $C$, $K$, $\Id_n$. This issue is addressed by parameter scaling.

\subsection{Parameter scaling}
 
The idea is to define a scaled problem $\widetilde{Q}(\mu) = \mu^2 \widetilde{M} + \mu \widetilde{C} + \widetilde{K}$, which depends on two scalar parameters $\gamma$ and $\delta$, where
	$\lambda = \gamma \mu, \;\; \widetilde{Q}(\mu) = Q(\lambda)\delta = \mu^2 (\gamma^2\delta M) + \mu (\gamma \delta C) + \delta K$.
This scaling has no effect on backward error for QEP, but only on backward error for the linearization. In \texttt{quadeig}, \cite{Hammarling:QUADEIG} used two types of scaling.
\paragraph{\underline{Fan, Lin and Van Dooren scaling}}\cite{Fan-Lin-Dooren-Scaling} The scaling parameters $\gamma$ and $\delta$ are defined as the solution of minimization problem
	$\min_{\gamma,\delta} \max \{\|\widetilde{K}\|_2\!-\! 1,\|\widetilde{C}\|_2\!-\!1,\|\widetilde{M}\|_2\!-\! 1\}$,
so that in the linearization (\ref{linearization-C2}) of $\widetilde{Q}(\mu)$ all nonzero blocks are of norm close to one. The solution of this minimization problem is 
	$\gamma = \sqrt{{\|K\|_2}/{\|M\|_2}}$, $\delta = {2}/{(\|K\|_2+\|C\|_2\gamma)}$.
\paragraph{\underline{Tropical scaling}} $\gamma$ and $\delta$ are defined as tropical roots of max-times scalar quadratic polynomial
$
	q_{\text{trop}}(x) = \max (\|M\|_2x^2,\|C\|_2x,\|K\|_2), \;\; x \in [0,\infty\rangle.
$
Define $\tau_Q = \frac{\|C\|_2}{\sqrt{\|M\|_2\|K\|_2}}$. If $\tau_Q \leq 1$, there is double root
	$\gamma^+ = \gamma^- = \sqrt{\|K\|_2/\|M\|_2}$,
and if $\tau_Q>1$ there are two distinct roots
	$\gamma^+ = {\|C\|_2}/{\|M\|_2} > \gamma^- = {\|K\|_2}/{\|C\|_2}$.
Hence, when $\tau_Q>1$, scaling with 
	$\gamma = \gamma^+,\;\;\delta = (q_{\text{trop}}(\gamma^+))^{-1}$
is used to compute the eigenvalues outside of the unit circle, and scaling with
$\gamma = \gamma^-,\;\;\delta = (q_{\text{trop}}(\gamma^-))^{-1}$
 is used to compute the eigenvalue inside the unit circle.
For further details on parameter scaling see \cite{Gaubert-tropical}. 
\subsection{Balancing}\label{SS=B}
Backward error $\delta A$, such that $\|\delta A\|/\|A\|$ is small in some appropriate matrix norm may not guarantee high accuracy of the result (e.g. of the computed eigenvalues) if  the corresponding condition number is too big. Also, the interpretation of the computed result in the backward error analysis framework may be difficult because small $\|\delta A\|/\|A\|$ does not guarantee that the backward relative error in the individual matrix entries is correspondingly small. This problem in particular escalates if the matrix entries vary over several orders of magnitude, and the smallest entries carry relevant information. Well known technique to cope with the unbalanced matrix entries is the technique of balancing: replace $A$ with a similar matrix $D^{-1} A D$, where $D$ is a  diagonal matrix (or a permutation of diagonal matrix) that in some sense balances the matrix entries. For instance, for the non-symmetric eigenvalue problem, the balancing strategy proposed by \cite{Osborne-balancing}, \cite{Parlett-Reinsch-balance} ensures that for all $i$, the $i$th row and the $i$th column of $D^{-1} A D$ have (nearly) the same norm; usually the scaling factors $D_{ii}$ are replaced with closest power of the base of the machine arithmetic, so  that the balancing step only shifts the exponents of the matrix entries, leaving the mantissas unchanged. For a software implementation see \cite{2014arXiv1401.5766J}. Another interesting strategy of  is to make $D^{-1}AD$ closer to normal matrices \cite{Lemonnier:2006}. 
It should be noted that balancing may be harmful, see \cite{Watkins-balancing}.


Balancing has been recognized as an important preprocessing technique for eigensolvers for general pencils $A-\lambda B$, see e.g.  \cite{ward1981balancing}, \cite{Lemonnier:2006}. In some applications, it is convenient to apply it to matrix triplets. For instance, the frequency response matrix $C(\sigma E - A)^{-1}B$ of a MIMO descriptor linear system is efficiently computed for many values of $\sigma$ by using precomputed $m$-Hessenberg-triangular form of the triplet $(E,A,B)$. It is shown in \cite{bosner2014balancing} that the numerical robustness can be improved by pre-scaling $D_lAD_r$, $D_lED_r$ and $D_lB$, where the diagonal matrices $D_l=\mathrm{diag}(10^{l_i})_{i=1}^n$ and $D_r=\mathrm{diag}(10^{r_i})_{i=1}^n$ are such that the range of magnitude orders of all elements in the matrices $D_lAD_r$, $D_lED_r$ and $D_lB$ is minimal.


We will show that balancing can substantially improve numerical solution of the QEP, and that it should be considered as standard preprocessing technique, together with the parameter scaling. Balancing with $D_l$ and $D_r$ yields an equivalent QEP:
\begin{equation}\label{eq:diagScaledQEP}
	\widehat{Q}(\lambda) = \lambda^2(D_l M D_r) + \lambda(D_l C D_r) + (D_l K D_r).
\end{equation}
For a computed eigenpair $(\lambda,x)$ of (\ref{eq:diagScaledQEP}), the corresponding right eigenpair for the original problem is $(\lambda,D_r x)$. Note that this corresponds to the transformation of $A-\lambda B$ of the linearization (\ref{linearization-C2}) to 
$
(D_l \oplus D_l) (A - \lambda B) (D_r \oplus D_l^{-1}).
$

The scaling changes the matrix entries so that e.g. $m_{ij} \mapsto 10^{l_i} m_{ij} 10^{r_j}$, and logarithmic magnitude of the scaled element is $\log_{10} 10^{l_i} |m_{ij}| 10^{r_j} = l_i + r_j + \log_{10}|m_{ij}|$.
The idea of balancing is to make these as close to zero as possible over all entries of all matrices, where the importance of each matrix is weighted with a corresponding factor $\alpha_M\geq 0$, $\alpha_C\geq 0$, $\alpha_K\geq 0$, respectively. This yields the minimization problem $\min_{l=(l_i),r=(r_i)\in \mathbb{R}^{n}} \varphi(l,r)$, where
\begin{equation}\label{eq:ScalingProblem}
\varphi(l,r)\! =\!\!\!
 \sum^{n}_{i=1} [ \alpha_M \!\!\!\!\! \sum^n_{\substack{j=1\\m_{ij}\neq 0}}\!\!\!(l_i+r_j+\log_{10}\!|m_{ij}|)^2
+  \alpha_C \!\!\!\!\sum^n_{\substack{j=1\\c_{ij}\neq 0}}\!\!(l_i+r_j+\log_{10}\!|c_{ij}|)^2 +\alpha_K\!\!\!\! \sum^n_{\substack{j=1\\k_{ij}\neq 0}}\!\!(l_i+r_j+\log_{10}\!|k_{ij}|)^2],
\end{equation}
which can be written in the generic form $\|A x - b\|_2\rightarrow\min$ as  follows.
\begin{proposition}
For each index $i$ define row vectors
$\mathbbm{1}^{\alpha_M}_{i} = \alpha_M \mathcal{I}(M(i,:))$, 
$\mathbbm{1}^{\alpha_C}_{i} = \alpha_C \mathcal{I}(C(i,:))$, $\mathbbm{1}^{\alpha_K}_{i} = \alpha_K \mathcal{I}(K(i,:))$, where 
$\mathcal{I}(\cdot)$ is the indicator vector, e.g. $(\mathcal{I}(M(i,:)))_j=1$ if $m_{ij}\neq 0$ and $0$ otherwise.
Further, define diagonal matrices
$\Id^{\alpha_M}_i=\mathrm{diag}(\mathbbm{1}^{\alpha_M}_{i})$, $\Id^{\alpha_C}_i=\mathrm{diag}(\mathbbm{1}^{\alpha_C}_{i})$, $\Id^{\alpha_K}_i=\mathrm{diag}(\mathbbm{1}^{\alpha_K}_{i})$
and column vectors in $\mathbb{R}^n$
$L^{\alpha_M}_i = \alpha_M [\log|M(i,:)|\circ\mathcal{I}(M(i,:))]^T$, 
$L^{\alpha_M}_i = \alpha_M [\log|M(i,:)|\circ\mathcal{I}(M(i,:))]^T$, 
$L^{\alpha_M}_i = \alpha_M [\log|M(i,:)|\circ\mathcal{I}(M(i,:))]^T$,
where $\circ$ denotes the Hadamard product and $0\log 0$ is set to $0$.
Then \eqref{eq:ScalingProblem} is a linear least squares problem $\|Ax-b\|_2\to \text{min}$ with
\begin{equation}
A = \left(\begin{array}{c|c}
e_1\otimes \mathbbm{1}^{\alpha_M}_{1} & \Id^{\alpha_M}_1\\
\vdots & \vdots\\
e_n\otimes \mathbbm{1}^{\alpha_M}_{n} & \Id^{\alpha_M}_n\\ \hline
e_1\otimes \mathbbm{1}^{\alpha_C}_{1} & \Id^{\alpha_C}_1\\
\vdots & \vdots\\
e_n\otimes \mathbbm{1}^{\alpha_C}_{n} & \Id^{\alpha_C}_n\\ \hline
e_1\otimes \mathbbm{1}^{\alpha_K}_{1} & \Id^{\alpha_K}_1\\
\vdots & \vdots\\
e_n\otimes \mathbbm{1}^{\alpha_K}_{n} & \Id^{\alpha_K}_n
\end{array}\right),\;\;\; x = \begin{pmatrix}
l\\
r
\end{pmatrix},\;\;\; b = \left(\begin{array}{c}
L^{\alpha_M}_1\\
\vdots\\
L^{\alpha_M}_n\\\hline
L^{\alpha_C}_1\\
\vdots\\
L^{\alpha_C}_n\\\hline
L^{\alpha_K}_1\\
\vdots\\
L^{\alpha_K}_n
\end{array}\right).\;\;(\mbox{$\otimes$ is the Kronecker product.})
\end{equation}
The corresponding system  of normal equations, $Lx = p$, reads
\begin{equation*}
L = \begin{pmatrix}
F_1 & G\\
G^T & F_2
\end{pmatrix}, \;\; p = \begin{pmatrix}
-c\\
-d
\end{pmatrix},\;\; x = \begin{pmatrix}
l\\
r
\end{pmatrix},\;\begin{array}{l}
F_1 = \diag(n_{r_1},\ldots,n_{r_n}) \in \mathbb{R}^{n\times n}\cr
F_2 = \diag(n_{c_1},\ldots,n_{c_n}) \in \mathbb{R}^{n\times n}
\end{array}
\end{equation*}
with
$
n_{r_i}\!\! =\!\! \sum^n_{\substack{j=1\\m_{ij}\neq 0}}\alpha_M +\sum^n_{\substack{j=1\\c_{ij}\neq 0}}\alpha_C + \sum^n_{\substack{j=1\\k_{ij}\neq 0}}\alpha_K, 
n_{c_j}\!\! =\!\! \sum^n_{\substack{i=1\\m_{ij}\neq 0}}\alpha_M +\sum^n_{\substack{i=1\\c_{ij}\neq 0}}\alpha_C + \sum^n_{\substack{i=1\\k_{ij}\neq 0}}\alpha_K;
$
$G\in \mathbb{R}^{n\times n}$ is the sum of incidence matrices of $M$, $C$ and $K$ scaled by $\alpha_M,\alpha_C$ and $\alpha_K$: $G=\alpha_M \mathcal{I}(M)+\alpha_C \mathcal{I}(C)+\mathcal{I}(K)$, where e.g. $\mathcal{I}(M)_{ij}=1$ if $m_{ij}\neq 0$ and $0$ otherwise.
The entries of $c\in \mathbb{R}^n$ are
$c_i = \alpha_M\sum^n_{\substack{j=1\\m_{ij}\neq 0}}\log|m_{ij}| + \alpha_C\sum^n_{\substack{j=1\\c_{ij}\neq 0}}\log|c_{ij}| + \alpha_K\sum^n_{\substack{j=1\\k_{ij}\neq 0}}\log|k_{ij}|$,
and of $d\in \mathbb{R}^n$ are
$d_j = \alpha_M\sum^n_{\substack{i=1\\m_{ij}\neq 0}}\log|m_{ij}| + \alpha_C\sum^n_{\substack{i=1\\c_{ij}\neq 0}}\log|c_{ij}| + \alpha_K\sum^n_{\substack{i=1\\k_{ij}\neq 0}}\log|k_{ij}|$.
\end{proposition}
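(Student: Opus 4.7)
The strategy is to interpret each squared summand in $\varphi(l,r)$ as the square of one component of a residual vector $Ax-b$ with the unknown $x=(l^T,r^T)^T\in\mathbb{R}^{2n}$, and then to derive the normal equations by forming $A^TA$ and $A^Tb$ block by block. Since $\varphi$ naturally splits into three independent sums, one for each coefficient matrix $M$, $C$, $K$, I would split $A$ and $b$ correspondingly into three horizontal blocks. Within the $M$--block, for each row index $i$ and each column index $j$ with $m_{ij}\neq 0$ the residual $l_i+r_j+\log_{10}|m_{ij}|$ is linear in $l_i$ and $r_j$ with the same weight, so gathering the $n$ pairs $(i,1),\dots,(i,n)$ into one sub-block produces exactly a row bundle whose left half is $e_i\otimes \mathbbm{1}^{\alpha_M}_i$ (placing the weight in column $i$ at every row $j$ with $m_{ij}\neq 0$), whose right half is $\Id^{\alpha_M}_i$ (placing the weight in column $j$ at row $j$, only where $m_{ij}\neq 0$), and whose right-hand side is $L^{\alpha_M}_i$. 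The zero rows produced wherever $m_{ij}=0$ contribute nothing to the least-squares sum, which reproduces the restriction to nonzero entries in $\varphi$.

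Once $\|Ax-b\|_2^2$ is seen to agree with $\varphi(l,r)$ (up to the scaling convention implicit in $\mathbbm{1}^{\alpha_M}_i$, $\mathbbm{1}^{\alpha_C}_i$, $\mathbbm{1}^{\alpha_K}_i$), the normal equations $A^TAx=A^Tb$ follow from first-order optimality. I would compute the four $n\times n$ blocks of $A^TA$ separately. The upper-left $(l,l)$ block is a sum of three contributions of the form $\sum_i (e_i\otimes \mathbbm{1}^{\alpha_\bullet}_i)^T(e_i\otimes \mathbbm{1}^{\alpha_\bullet}_i)$; each such term is diagonal, with $(i,i)$-entry equal to the weight $\alpha_\bullet$ times the number of nonzero entries in row $i$ of the corresponding matrix. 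Adding the three yields $F_1=\diag(n_{r_1},\dots,n_{r_n})$ exactly as claimed. The lower-right $(r,r)$ block arises from $\sum_i (\Id^{\alpha_\bullet}_i)^T\Id^{\alpha_\bullet}_i=\diag\!\bigl(\alpha_\bullet\sum_i \mathcal{I}(m_{ij}\neq 0)\bigr)_{j=1}^n$ summed across the three matrices, giving $F_2=\diag(n_{c_1},\dots,n_{c_n})$.

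The off-diagonal $(l,r)$ block is the sum over $i$ and the three matrices of $(e_i\otimes \mathbbm{1}^{\alpha_\bullet}_i)^T \Id^{\alpha_\bullet}_i = \alpha_\bullet e_i(\mathcal{I}(\bullet(i,:)))$; stacking these over $i$ simply recovers the weighted sum of the incidence matrices $\alpha_M\mathcal{I}(M)+\alpha_C\mathcal{I}(C)+\alpha_K\mathcal{I}(K)=G$. For the right-hand side $A^Tb$, the $l$-part collects all residual constants touching a given row index $i$, producing $-c_i$ exactly as in the statement, while the $r$-part collects all constants touching a given column index $j$, producing $-d_j$. Combining these pieces gives the claimed block system.

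The computation is entirely mechanical; the only real obstacle is bookkeeping. One must carefully track the Kronecker product structure to confirm that the zero rows at positions where a coefficient entry vanishes contribute nothing, and must be consistent about whether $\alpha_M,\alpha_C,\alpha_K$ enter $A$ directly (in which case $\varphi$ carries factors $\alpha_\bullet^2$) or as square roots (in which case $\varphi$ carries factors $\alpha_\bullet$); this is a cosmetic issue that does not affect either the structure of $A$, $b$ or the form of the normal equations. Beyond that, convexity of $\varphi$ guarantees that the normal equations are both necessary and sufficient for the minimizer, so no further argument is required.
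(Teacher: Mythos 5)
Your approach — recognizing each squared summand of $\varphi(l,r)$ as the square of one residual component, reading off the block structure of $A$ and $b$, and then forming the normal equations by computing the four $n\times n$ blocks of $A^{T}A$ and the two halves of $A^{T}b$ — is exactly the right (and really the only natural) route; the paper itself offers no proof beyond the assertion. Your bookkeeping for $F_1$, $F_2$ and $G$ is correct.

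However, you dismiss the treatment of the weights $\alpha_M,\alpha_C,\alpha_K$ as ``cosmetic,'' and that is not accurate. As literally stated, the proposition places a full factor $\alpha_\bullet$ (not $\sqrt{\alpha_\bullet}$) into $\mathbbm{1}^{\alpha_\bullet}_i$, $\Id^{\alpha_\bullet}_i$ and $L^{\alpha_\bullet}_i$, while $\varphi$ in \eqref{eq:ScalingProblem} carries a single factor $\alpha_\bullet$ in front of each squared term. With the factors as given, $\|Ax-b\|_2^2$ reproduces $\varphi$ with weights $\alpha_\bullet^2$ rather than $\alpha_\bullet$, and the normal equations $A^{T}Ax=A^{T}b$ then have coefficients $\alpha_\bullet^2$ throughout — which does \emph{not} agree with the stated $F_1$, $F_2$, $G$, $c$, $d$, which all carry first powers of $\alpha_\bullet$. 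Unless $\alpha_M=\alpha_C=\alpha_K$, the two systems have different solutions, so this is a genuine inconsistency and not a matter of convention. The fix is to take the entries of $A$ and $b$ with $\sqrt{\alpha_\bullet}$ in place of $\alpha_\bullet$; then $A^{T}A$ and $A^{T}b$ carry $\alpha_\bullet$ and match the displayed $L$. You should also flag the sign of $b$: with $L^{\alpha_\bullet}_i=+\alpha_\bullet[\log|\cdot|\circ\mathcal{I}(\cdot)]^{T}$ as written, the residuals are $l_i+r_j-\log|\cdot|$ and $A^{T}b$ equals $(+c,+d)$ rather than the stated $p=(-c,-d)$; one must either negate $L^{\alpha_\bullet}_i$ or observe that $\varphi$ uses $+\log$ while the least-squares residual uses $-b$. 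Your derivation of the blocks is otherwise sound — just be explicit that the proposition as printed requires these two corrections before $\|Ax-b\|_2^2$ actually equals $\varphi(l,r)$ and the normal equations take the asserted form.
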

In the case of large scale sparse problem, the normal equations can be solved using the preconditioned conjugate gradients method, see e.g.  \cite{bosner2014balancing}. 

In Figure \ref{fig:beam-cbe-right}, we illustrate the impact of balancing to the backward error in the case of one NLEVP benchmark example (damped beam). In general, this kind of preprocessing should also be used in the iterative methods. 

	\begin{figure}[H]
		\centering
		\includegraphics[width=0.45\textwidth, height=1.6in]{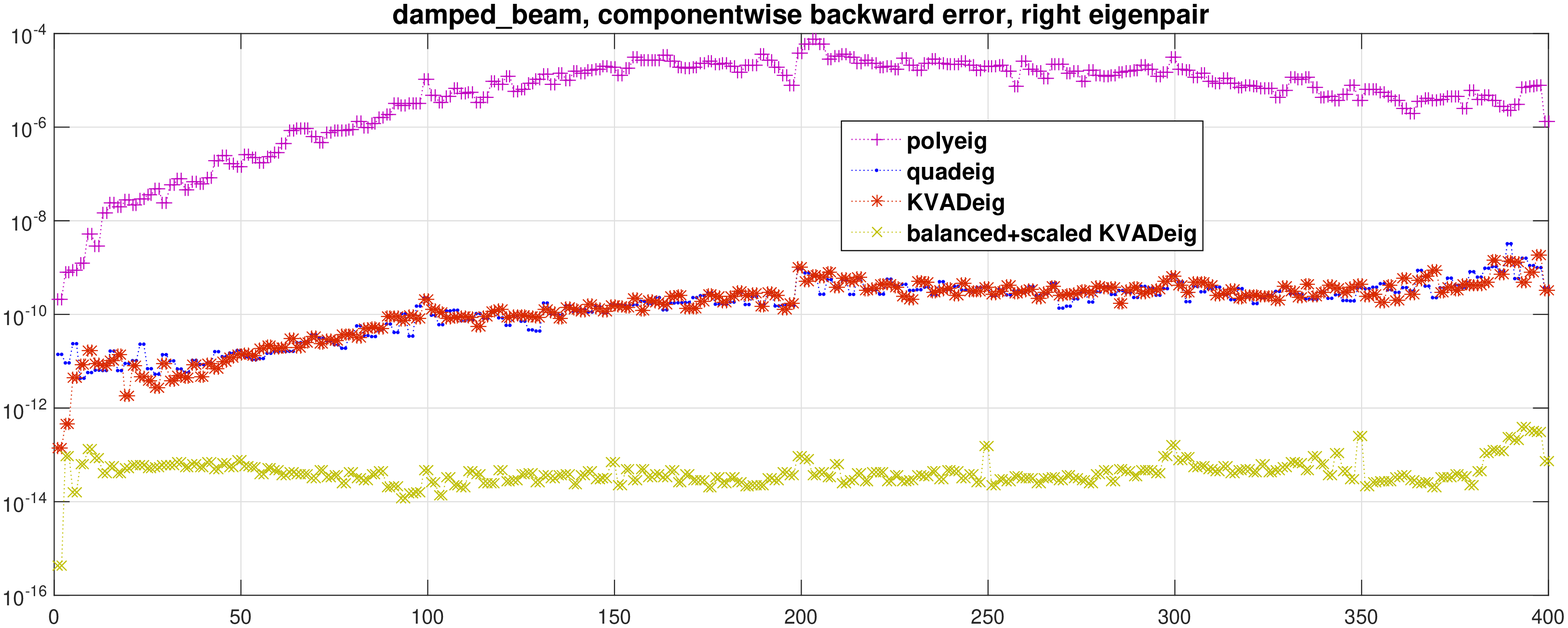}
		\includegraphics[width=0.45\textwidth, height=1.6in]{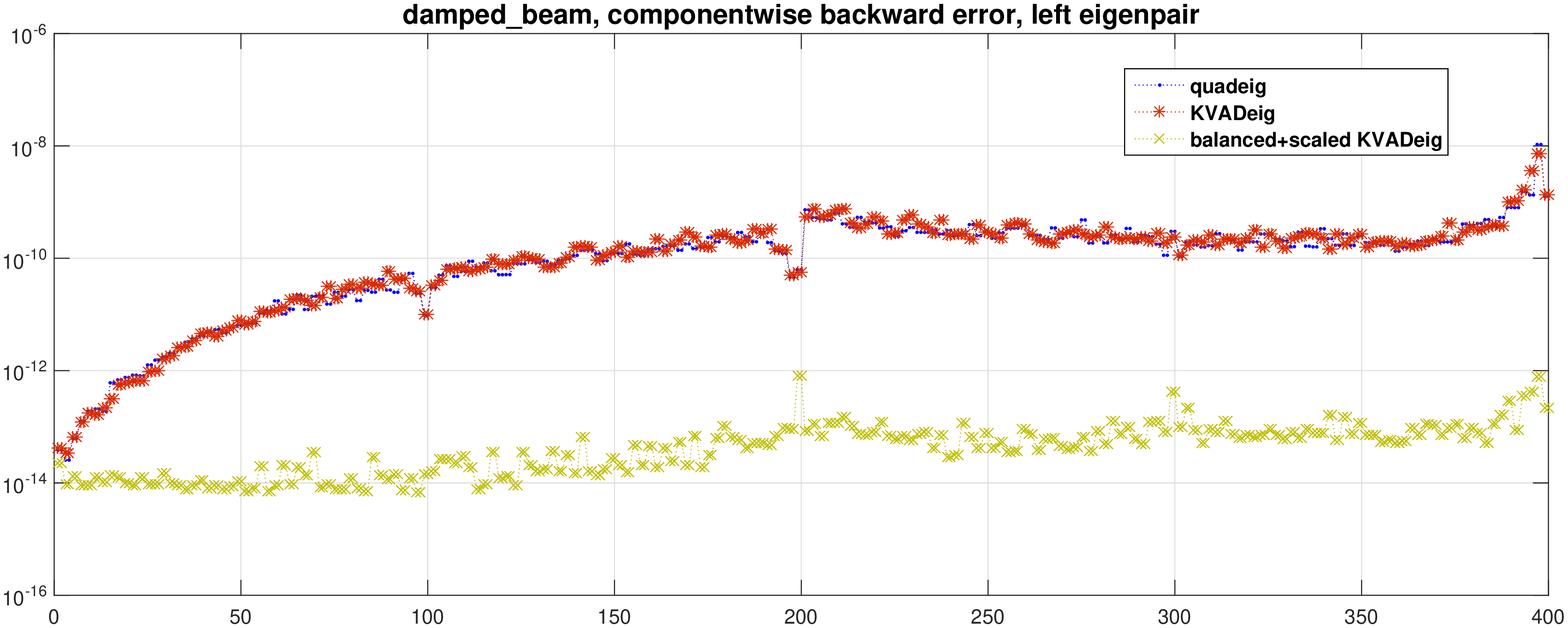}
		\caption{\label{fig:beam-cbe-right} The example \texttt{damped\_beam} from the NLEVP collection. The diagonal balancing included in the new algorithm \texttt{KVADeig} helps in reducing the component-wise backward error of the \texttt{quadeig} algorithm.}	
	\end{figure}

%
\section{Improved deflation process}\label{S=New-Deflation}

In this section, we  propose an extension of the \texttt{quadeig} reduction scheme toward the KCF outlined in \S \ref{SS=KCF-QR}. To motivate, we use an example from the NLEVP collection.

\subsection{A case study example} 
This is a $10\times 10$ quadratic eigenvalue problem for the pencil $\mathcal{I}(\lambda)=\lambda^2 M + \lambda C + K$, whose real eigenvalues and the corresponding eigenvectors give the intersection points of a sphere, a cylinder and a plane. 
Although of small dimension and very simple structure, this example is an excellent illustration of difficulties in solving nonlinear eigenvalue problems.

It has been shown in \cite{morgan1992polynomial}, \cite{Manocha1994SolvingSO} that $\mathcal{I}(\lambda)$ has only four finite eigenvalues: two real ones and a complex conjugate pair. 
We take this example as a case study and compute the spectrum by several mathematically equivalent methods.
If one plainly applies the QZ to a linearization of $\mathcal{I}(\lambda)$, such as the first or the second companion form with the Fan-Lin-Van Dooren scaling, the spectrum appears as
\begin{eqnarray}
\!\!\!\!\!\!&& \mathcal{C}_1(\lambda)\! : \! \left\{
\begin{array}{l}
\lambda_1 = \texttt{2.476851749893558e+01},\;\;
\lambda_2 = \texttt{2.476851768196165e+01} \cr
\lambda_3 = \texttt{-5.581844429198920e+08 - 1.628033679447590e+09} \ii \cr 
\lambda_4 = \texttt{-5.581844429198920e+08 + 1.628033679447590e+09} \ii \cr 
\lambda_5 = \texttt{2.570601782117493e+18}, \;\;
\lambda_6 =\ldots = \lambda_{14}=\texttt{Inf},\; \lambda_{15}=\ldots=\lambda_{20}=\texttt{-Inf}
\end{array} \right. \label{eq:intersect:C1-scaled}\\ 
\!\!\!\!\!\!&& \mathcal{C}_2(\lambda)\! : \! \left\{
\begin{array}{l}
\lambda_1 = \texttt{2.476851749893561e+01}, \;\;
\lambda_2 = \texttt{2.476851768196167e+01} \cr
\lambda_3 = \texttt{-2.653302084597818e+09}, 
\lambda_4 =\ldots = \lambda_{17}=\texttt{Inf},  \lambda_{18}\! =\ldots=\lambda_{20}\!=\texttt{-Inf}
\end{array} \right.\label{eq:intersect:C2-scaled}
\end{eqnarray}
If we use the same method, but with the reversed pencil $\mu^2 K +\mu C + M$, ($\lambda=1/\mu$)  then from the first companion form QZ has computed $12$ finite eigenvalues, and from the second $10$. 
If we run the Matlab's solver \texttt{polyeig()}, we obtain 
\begin{equation}\label{eq:intersect:polyeig}
\texttt{polyeig}(\mathcal{Y}(\lambda))\; : \; \left\{
\begin{array}{l}
\lambda_1 = \texttt{2.476851768196161e+01}, \;\
\lambda_2 = \texttt{2.476851749893561e+01}, \cr
\lambda_3 = \texttt{1.426603361688555e+08},\;\; 
\lambda_4 = \texttt{-1.353812777123886e+08} \cr 
\lambda_5 =\ldots = \lambda_{18}=\texttt{Inf},\; \lambda_{19}=\lambda_{20}=\texttt{-Inf}
\end{array} \right.
\end{equation}
and if we scale the coefficient matrices then 
\begin{equation}\label{eq:intersect:polyeig-scaled}
\texttt{polyeig}(\mathcal{Y}_{scaled}(\lambda))\; : \; \left\{
\begin{array}{l}
\lambda_1 = \texttt{2.476851768196165e+01}, \;\;
\lambda_2 = \texttt{2.476851749893559e+01} \cr
\lambda_3 = \texttt{-3.020295324523709e+08 + 1.229442619245432e+09} \ii \cr 
\lambda_4 = \texttt{-3.020295324523709e+08 - 1.229442619245432e+09} \ii \cr 
\lambda_5 =\ldots = \lambda_{18}=\texttt{Inf},\; \lambda_{19}=\lambda_{20}=\texttt{-Inf}
\end{array} \right.
\end{equation}
Almost perfect match in $\lambda_1$ and $\lambda_2$ is reassuring, but there is an obvious  disagreement in the total number and the nature (real or complex) of finite eigenvalues. With an earlier version of Matlab, the results that correspond to (\ref{eq:intersect:C1-scaled}), (\ref{eq:intersect:C2-scaled}), (\ref{eq:intersect:polyeig}),  and (\ref{eq:intersect:polyeig-scaled}) coincide in the numbers of finite eigenvalues; $\lambda_1$ and $\lambda_2$ are close up to machine precision, but the remaining computed finite eigenvalues are substantially different.

The rank of the matrix $M$ is exactly $3$, and it will be correctly determined numerically due to a particularly simple sparsity structure of $M$. The matrix $K$ is also sparse with $\kappa_2(K) \approx \texttt{4.09+03}$, so there is no numerical rank issue. In this situation, a preprocessing procedure such as in \texttt{quadeig} will reverse the pencil and deflate $7$ zero eigenvalues (infinite eigenvalues of the original problem) at the very beginning. The remaining eigenvalues are then computed (e.g. using \texttt{quadeig}) as\footnote{Here, to save the space, we display the computed values only to five digits.} 
\begin{equation}
\begin{array}{l||l}
\lambda_1 = \texttt{2.4769e+001}   &        \lambda_8 = \texttt{-1.4660e+007 - 6.9064e+006} \ii \cr 
\lambda_2 = \texttt{2.4769e+001}   &   \lambda_9 =  \texttt{-1.4660e+007 + 6.9064e+006} \ii \cr       	
\lambda_3 = \texttt{1.1194e+006}   &  \lambda_{10} =  \texttt{-4.5822e+015} \cr 
\lambda_4 = \texttt{-5.5674e+005 -1.0143e+006} \ii & \lambda_{11} = \texttt{-3.9134e+015} \cr 
\lambda_5 = \texttt{-5.5674e+005 + 1.0143e+006} \ii & \lambda_{12} = \texttt{-2.3047e+019} \cr 
\lambda_6 = \texttt{1.4679e+007 - 1.9395e+007} \ii  & \lambda_{13}= \texttt{3.0862e+020}\cr
\lambda_7 = \texttt{1.4679e+007 + 1.9395e+007} \ii  &
\end{array}
\end{equation}
After the deflation of the $7$ zero eigenvalues, in the thus obtained linearization $A-\lambda B$, the rank of the matrix $A$ is $7$, and it can be determined exactly because of sparsity ($A$ has $6$ zero columns, and the remaining $7$ ones build a well conditioned $13\times 7$ submatrix of $A$). The matrix $B$ is well conditioned. 
This means that at least $6$ more zero eigenvalues are present in the reversed problem (infinities in the original problem); those are not detected by the QZ algorithm running on $A-\lambda B$.
\begin{remark}
		It should be noted that the successful removal of many infinite eigenvalues in 
		(\ref{eq:intersect:C1-scaled}), (\ref{eq:intersect:C2-scaled}), (\ref{eq:intersect:polyeig}),  and (\ref{eq:intersect:polyeig-scaled}) is due to the sparsity of $M$, $C$, $K$ (and the linearization $A-\lambda B$) that is successfully exploited by the preprocessing in the QZ algorithm. Recall, before the reduction to the triangular - Hessenberg form the matrices are scaled and permuted, as described in \cite{ward1981balancing} in order to get an equivalent pencil $\hat{A} - \lambda \hat{B}$ with the block structure
		\begin{equation}
			\hat{A} = \left(\begin{smallmatrix}
			A_{11} & A_{12}D_2G_2 & A_{13}\\
			0 & G_1D_1A_{22}D_2G_2 & G_1D_1A_{23}\\
			0 & 0 & A_{33}
			\end{smallmatrix}\right),\;\; \hat{B} = \left(\begin{smallmatrix}
			B_{11} & B_{12}D_2G_2 & B_{13}\\
			0 & G_1D_1B_{22}D_2G_2 & G_1D_1B_{23}\\
			0 & 0 & B_{33}
			\end{smallmatrix}\right),
		\end{equation}
		where $A_{11}, A_{33}, B_{11}, B_{33}$ are upper triangular, and
the matrices $D_1$ and $D_2$ are computed so that the elements of $D_1A_{22}D_2$ and $D_1B_{22}D_2$ have magnitudes close to one. Further, $G_2$ is the permutation matrix determined so that the ratios of the column norms of $D_1A_{22}D_2G_{2}$ to the corresponding column norms of $D_1B_{22}D_2G_2$ appear in decreasing order;  $G_1$ is determined so that the ratios of row norms of $G_1D_1A_{22}D_2G_{2}$ to those of $G_1D_1B_{22}D_2G_{2}$ appear in decreasing order. See also \cite{kresner-thesis}.
		
	\end{remark}



\subsection{Deflation process revisited -- a modified scheme}\label{SS=New-Reduce-KCF}
Let us now get back to the initial idea of \texttt{quadeig} -- removing the zero and the infinite eigenvalues in the preprocessing phase of the computation. To introduce a new scheme, consider removing the zeros from the spectrum and assume that a rank revealing QR factorization 
$
K P_K = Q_K \left(\begin{smallmatrix} R_{K,1} \cr \0_{n-r_K,n}\end{smallmatrix}\right)=Q_{K,1}R_{K,1},
$
$Q_K=(Q_{K,1}, Q_{K,2})$,
has detected that $K$ is numerically rank deficient. Consider the transformation
\begin{eqnarray}
	\mathbf{P}_1(A-\lambda B)\mathbf{Q}_1 &=& \left(\begin{array}{c |||c}
	Q^*_K & \0 \cr \hline\hline\hline
	\0 & Q^*_K
	\end{array}\right) (\begin{pmatrix} C & -\Id_n \cr K & \0\end{pmatrix} -\lambda \begin{pmatrix} -M & \0 \cr \0 & -\Id_n \end{pmatrix}) \left(\begin{array}{c |||c}
	\Id_n & \0 \cr \hline\hline\hline
	\0 & Q_K
	\end{array}\right) \\
	&=&\left(\begin{array}{c |||c}
	Q^*_KC & -\Id_n \cr \hline\hline\hline
	{R}_{K,1} P^T_K & \0\cr\hline
	\0 & \0
	\end{array}\right) - \lambda \left(\begin{array}{c |||c}
	-Q^*_KM & \0 \cr \hline\hline\hline
	\0 & -\Id_n
	\end{array}\right). \label{eq:P1(..)Q1}
\end{eqnarray}
Note that $\mathrm{rank}(A) = n + \mathrm{rank}(K)$, so $A$ and $K$ have null spaces of equal dimensions. In essence, multiplication from the left with 
$Q_K^* \oplus Q_K^*$ (or with $Q_M^*\oplus Q_K^*$, or $\Id_n\oplus Q_K^*$) is a rank revealing transformation of $A$. 
We now truncate the $s_1=n-r_K$ copies of the eigenvalue $\lambda=0$ and proceed with the truncated $(n+r_K)\times (n+r_K)$ pencil
\begin{equation}\label{eq:truncatedPencil}
	A_{22} - \lambda B_{22} = \left(\begin{array}{c |||c}
	Q_{K,1}^*C & -\Id_{r_K} \cr \hline
	Q_{K,2}^*C & \0 \cr \hline\hline\hline
	{R}_{K,1}P^T_K & \0
	\end{array}\right) - \lambda \left(\begin{array}{c |||c}
	-Q^*_KM & \0 \cr \hline\hline\hline
	\0 & -\Id_{r_K}
	\end{array}\right).
\end{equation}
Note  that using the definition (\ref{eq:truncatedPencil}) of $A_{22} - \lambda B_{22}$ in {(\ref{eq:P1(..)Q1})} yields 
\begin{equation}
\mathbf{P}_1 ( A - \lambda B ) \mathbf{Q}_1 = \begin{pmatrix} A_{22}-\lambda B_{22} &  [\bigstar]_{n+r_K,n-r_K} \cr \0 & - \lambda B_{11}
\end{pmatrix},\;\; B_{11} = -\Id_{n-r_K}.
\end{equation}
At this point, \texttt{quadeig} removes the $n-r_K$ zero eigenvalues and leaves to the QZ to find the remaining ones, if present in the reduced problem. 
However, note that with $A_{11}:=A$ and $B_{11}:=B$, this procedure can be understood as the first step of the Van Dooren's algorithm (actually its transposed version described in \S \ref{SS=KCF-QR}; see (\ref{eq:rrQRKronecker2})), which removes the first Jordan block  of the eigenvalue zero. The existence of further Jordan blocks of $\lambda=0$ can be established immediately by checking the rank of $A_{22}$.

From the definition of $A_{22}$ in (\ref{eq:truncatedPencil}), it follows that  it suffices to  compute a rank revealing QR factorization of its $n\times n$ submatrix\footnote{This is why we have not followed the strategy (\ref{eq:LGEVP:n+rK}).} $A_{22}(r_K+1 : n+r_K, 1:n )$, 
	\begin{equation}\label{eq:intermediateMatrix}
	\left(\begin{array}{c}
	Q^*_{K,2}C \cr \hline
	{R}_{K,1}P^T_K
	\end{array}\right)P_{A_{22}} = Q_{A_{22}}R_{A_{22}}.
	\end{equation}
%
This can be used to transform the pencil $A_{22}-\lambda B_{22}$ to 
	\begin{equation}\label{eq:seconInter}
	\widehat{\mathbf{P}}_2(A_{22}-\lambda B_{22}) = \left(\begin{array}{c |||c}
	Q_{K,1}^*C & -\Id_{r_K} \cr \hline
	R_{A_{22}}P^T_{A_{22}} & \0
	\end{array}\right) - \lambda \widehat{\mathbf{P}}_2\left(\begin{array}{c |||c}
	Q^*_KM & \0 \cr \hline\hline\hline
	\0 & -\Id_{r_K}
	\end{array}\right),\;\; \widehat{\mathbf{P}}_2 = \left(\begin{array}{c| c}
	\Id_{r_K} & \0 \cr \hline
	\0 & Q^*_{A_{22}}
	\end{array}\right) .
	\end{equation}
If the factorization (\ref{eq:intermediateMatrix}) shows no rank deficiency,
there are no zeros in the spectrum of $A_{22}-\lambda B_{22}$. Equivalently, the reversed pencil $B_{22} - \mu A_{22}$ has no infinite eigenvalues.

Otherwise, $A_{22}(r_K+1 : n+r_K, 1:n )$ is rank deficient; assume its rank to be $r_{22}$, $r_{22} < n$, and let $s_2 = n+r_K-r_{22}$. Then
$
R_{A_{22}} = \left(\begin{smallmatrix}  {R}_{A_{22},1} \cr \0_{n-r_{22},n}
 \end{smallmatrix}\right),\;\;{R}_{A_{22},1} \in\mathbb{C}^{r_{22}\times n},
$
and
\begin{equation}\label{newDelationStep2}
\widehat{\mathbf{P}}_2 A_{22} = \left(\begin{array}{c |||c}
Q_{K,1}^*C & -\Id_{r_K} \cr \hline
{R}_{A_{22},1}P_{A_{22}}^T & \0 \cr \hline
\0_{n-r_{22},n} & \0_{n-r_{22},r_K}
\end{array}\right),\;\;
\widehat{\mathbf{P}}_2 B_{22} =
\left(\begin{array}{c |||c}
Q_{K,1}^*M & \0_{r_K} \cr \hline
\blacksquare_{r_{22},n} & \blacktriangle_{r_{22},r_K} \cr \hline
\square_{n-r_{22},n} & \triangle_{n-r_{22},r_K}
\end{array}\right) .
\end{equation}
%

To deflate the additional $m-r_{22}$ zeros, we need $(n-r_{22})\times (r_K+r_{22})$ zero block in the lower left corner of the matrix $\widehat{\mathbf{P}}_2 B_{22}$. 
This is achieved using the complete orthogonal decomposition $\left(\begin{array}{c|||c}
	\square & \triangle
\end{array}\right) = U_BR_BV^*_B$ of the $(n-r_{22})\times (n+r_k)$ block $\left(\begin{array}{c|||c}
\square & \triangle
\end{array}\right)$ in (\ref{newDelationStep2}).

If we write 
$
\left(\begin{array}{c|||c}
\square & \triangle
\end{array}\right)V_B = \left(\begin{array}{c c}
\Diamond_{n-r_{22},n-r_{22}} & \0_{n-r_{22}, r_K+r_{22}}
\end{array}\right)
$, $U_B R_B = \Diamond_{n-r_{22},n-r_{22}}$, 
and define the permutation $P_B$ to swap the 
first $(n-r_{22})$ and the remaining $(r_K+r_{22})$ columns, then deploying the transformation matrix 
\begin{equation}
\widehat{\mathbf{Q}}_2 = \left(\begin{array}{c c}
V_BP_B & \0_{n+r_K,n-r_K}\\
\0_{n-r_K,n+r_K} & \Id_{n-r_K}
\end{array}\right),
\end{equation}
from the right yields the targeted structure
\begin{equation}\label{eq:SecondTransformation}
\widehat{\mathbf{P}}_2A_{22}\widehat{\mathbf{Q}}_2-\lambda\widehat{\mathbf{P}}_2B_{22}\widehat{\mathbf{Q}}_2 = \left(\begin{array}{c c}
A_{33} - \lambda B_{33} & \spadesuit \\
\0_{n-r_{22},r_K+r_{22}} & -\lambda B_{22}
\end{array}\right).
\end{equation}
In \eqref{eq:SecondTransformation}, we slightly abuse the notation and denote the block $U_B R_B$ by $B_{22}$, to emphasize that this procedure follows the reduction from \S \ref{SS=KCF-QR}. If we premultiply \eqref{eq:SecondTransformation} with $\Id_{t_K+r_{22}}\oplus U_B^*$, then we can set $B_{22}=R_B$. Note that in (\ref{eq:SecondTransformation}) nonsingularity of $B_{22}$ (i.e. full row rank of the matrix block $\left(\begin{array}{c|||c}
\square & \triangle
\end{array}\right)$ in (\ref{newDelationStep2})) is necessary for the regularity of the pencil. If the overall algorithm is designed to consider singular pencil an invalid input, then instead of the complete orthogonal decomposition we can use rank revealing LQ factorization with row (or complete) pivoting; in that case $U_B$ is a permutation matrix, and $R_B$ is lower triangular.\footnote{In this case, by additional permutations $R_B$ can be made upper triangular.}

After this step, the structure of the linearization pencil is lost. To determine the existence of more zero eigenvalues, the rank of the $(r_K+r_{22})\times(r_K+r_{22})$ matrix $A_{33}$ is needed. For the further deflation we simply switch to the KCF computation as in \S \ref{SS=KCF-QR}.

\subsection{Backward error}\label{SS=KVADeig-backward}

	Since all involved transformations are numerically unitary, the classical backward error analysis of the transformations of $A-\lambda B$  in a unitary invariant matrix norm is straightforward. It is important, however, to keep the backward errors in terms of the original matrices $M$, $C$, $K$ (instead of the matrices $A$, $B$ of the linearization) and to estimate them in finer resolution -- instead of measuring the backward error in terms of matrix (operator) norm, the goal is to obtain the size of relative error in individual columns, rows or matrix entries.

   Such an analysis is usually technically involved and tedious. Here we provide the key elements of the analysis of the first two steps of the removal of zero eigenvalues. The idea is to provide insight, and to show how to keep small backward errors in the matrices of the quadratic problem, and to ensure it is column--wise small. 
   \begin{theorem}
   	The reduction described in \S \ref{SS=New-Reduce-KCF} computes (\ref{eq:SecondTransformation}) with small backward errors in the initial data matrices $M$, $C$ and $K$.
   	\end{theorem}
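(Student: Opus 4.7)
The plan is to track each transformation in \S \ref{SS=New-Reduce-KCF} separately, write out the exact backward-error relation it introduces, and then show that the accumulated perturbation can be expressed as three perturbations $\delta M, \delta C, \delta K$ of the original coefficients with small column-wise norms, thereby avoiding the usual loss of structure that arises when one only bounds $\|\delta A\|,\|\delta B\|$ on the linearization.

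First I would analyze the initial factorization of $K$. By \eqref{eq:QRF(M):backward-2} applied to $K$ (with row sorting if complete pivoting is used), the computed $\widetilde Q_K,\widetilde R_{K,1},\widetilde\Pi_K$ satisfy $(K+\delta K)\widetilde\Pi_K=\widehat Q_K\widetilde R_K$ with $\widehat Q_K$ exactly unitary and $\|\delta K(:,i)\|_2\le\epsilon_{qr}\|K(:,i)\|_2$. The truncation that sets the trailing $(n-r_K)\times n$ block of $\widetilde R_K$ to zero produces, through \eqref{eq:M:truncate:backw}, an additional perturbation $\Delta K$ that is zero on the $r_K$ pivot columns of $K$ and bounded on the remaining columns as in \S \ref{SSS=Truncation-Back-Err}. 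Forming $\widehat Q_K^*C$ and $\widehat Q_K^*M$ is a left multiplication by an exactly unitary matrix, so column norms are preserved and the finite-precision error in that product can be absorbed into column-wise small perturbations $\delta C_1$ and $\delta M_1$ of $C$ and $M$. At this stage the exact identity realized in floating point is
\begin{equation*}
\mathbf{P}_1\,(A+\delta A_1)\,\mathbf{Q}_1 =\begin{pmatrix} A_{22}-\lambda B_{22} & [\bigstar]\\ \0 & -\lambda I_{n-r_K}\end{pmatrix},
\end{equation*}
with $\delta A_1$ corresponding to the block $(\delta C_1\;\mathbf 0;\,\delta K+\Delta K\;\mathbf 0)$ and $\delta B_1$ analogously from $\delta M_1$; column-wise bounds on $\delta M_1,\delta C_1,\delta K+\Delta K$ follow directly.

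Next I would treat the second rank-revealing factorization \eqref{eq:intermediateMatrix}. Applying \eqref{eq:QRF(M):backward-2} again, the computed factors satisfy
\begin{equation*}
\left(\begin{smallmatrix} \widehat Q_{K,2}^*C+E_C\\ \widetilde R_{K,1}\widetilde\Pi_K^T+E_K\end{smallmatrix}\right)\widetilde P_{A_{22}}=\widehat Q_{A_{22}}\widetilde R_{A_{22}},
\end{equation*}
with $\|E_C(:,i)\|_2$ and $\|E_K(:,i)\|_2$ bounded by $\epsilon_{qr}$ times the norm of the corresponding column of the stacked matrix. The key observation is that because $\widehat Q_{K,2}^*$ and $\widetilde R_{K,1}\widetilde\Pi_K^T$ are, respectively, orthonormal and (up to $\delta K$) a row block of the pivoted $K$, these column norms are bounded by the original $\|C(:,i)\|_2$ and $\|K(:,i)\|_2$. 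One therefore unstacks $E_C,E_K$ into column-wise small perturbations $\delta C_2$ and $\delta K_2$ of $C$ and $K$ after pulling $\widehat Q_{K,2}$ and $P_K$ back through \eqref{eq:P1(..)Q1}. The truncation of the trailing $(n-r_{22})$ rows of $\widetilde R_{A_{22}}$ yields a further perturbation that, by the same bookkeeping, is column-wise small in the pivoted coordinates and hence in $C$ and $K$.

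Finally I would handle the complete orthogonal decomposition that produces \eqref{eq:SecondTransformation}. This is exactly the situation covered by the theorem in \S \ref{ss:COD}: row presorting together with column-pivoted QR on both sides yields the decomposition of $(\square\;\;\triangle)$ with a column-wise small backward error on that block. Since that block is obtained from rows of $\widehat{\mathbf P}_2 B_{22}$, which itself is obtained from $M$ and from an identity block by unitary transformations, the induced perturbation translates back into column-wise small $\delta M_2$ and into an error that falls on the deflated trivial identity block (hence does not propagate to $M,C,K$ at all). Collecting $\delta M=\delta M_1+\delta M_2$, $\delta C=\delta C_1+\delta C_2$, $\delta K=\delta K+\Delta K+\delta K_2$, one obtains the stated conclusion with column-wise bounds of the order $\epsilon_{qr}$ (or $\epsilon_{qr}^{\rightrightarrows}$ when row sorting is used).

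I expect the main obstacle to be the bookkeeping in the second step: the matrix in \eqref{eq:intermediateMatrix} stacks data from $C$ and from the already-perturbed triangular factor of $K$, so the generic column-wise backward bound for QR has to be split into pieces attributable to $C$ and $K$ individually, and the column norms of the stacked matrix must be bounded by the original column norms of $C$ and $K$ rather than by the operator norms of the linearization. Once this un-stacking is carried out cleanly, the remaining steps are routine compositions of backward-stable unitary transformations.
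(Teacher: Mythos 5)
Your overall strategy coincides with the paper's: for each transformation in the reduction, write the backward relation, route the perturbation back through the relevant block of $\widehat{Q}_K$ into $M$, $C$ or $K$, and bound columns individually. Your first step is essentially identical to the paper's first step. However, two points in your second and third steps are treated too lightly and constitute genuine gaps.

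First, you assert in the second step that the backward error of the pivoted QR of the stacked matrix $\left(\begin{smallmatrix}\widehat Q_{K,2}^*(C+\delta C)\\ \widetilde R_{K,1}\widetilde P_K^T\end{smallmatrix}\right)$ can be ``un-stacked'' so that $\delta C_2$ is small relative to $\|C(:,i)\|_2$ and $\delta K_2$ relative to $\|K(:,i)\|_2$ \emph{separately}. That is not what the column-wise QR bound gives: the error $\left(\begin{smallmatrix}\mathfrak C\\ \mathfrak K\end{smallmatrix}\right)(:,i)$ is bounded by $\epsilon_{qr}$ times the norm of the $i$th column of the \emph{stacked} matrix, so both $\|\mathfrak C(:,i)\|_2$ and $\|\mathfrak K(:,i)\|_2$ are bounded by $\epsilon_{qr}\sqrt{2}\max\{\|\widehat Q_{K,2}^*(C+\delta C)(:,i)\|_2,\ \|\widetilde R_K\widetilde P_K^T(:,i)\|_2\}$, i.e.\ they mix the $C$- and $K$-columns. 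The paper states this explicitly (and is precisely why it advocates balancing/scaling beforehand); your claim of a clean separation is not justified as written.

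Second, and more seriously, the paper is careful that the \emph{identity} blocks of the linearization must remain exactly $-\Id$ for a backward interpretation in terms of $M,C,K$ to even make sense; where it cannot push an error into $M,C,K$ it retreats to a \emph{mixed} (forward-backward) stability statement, replacing $\widetilde Q_{A_{22}}$ by the nearby exactly unitary $\widehat Q_{A_{22}}$ at a forward cost of $O(\epsilon_{qr})$. Your argument ignores this entirely. In the COD step you say the perturbation that falls on ``the deflated trivial identity block\ldots does not propagate to $M,C,K$ at all'' and conclude backward stability. But this is exactly the point where backward stability in terms of $M,C,K$ fails: after pushing the COD error back through $Q_{A_{22}}$, part of it perturbs the $-\Id_{r_K}$ columns of $B_{22}$, and a perturbation of that block cannot be written as a perturbation of $M$, $C$, or $K$. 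The paper avoids this by only proving the first two steps and by framing the $\widetilde Q_{A_{22}}$/identity issue as a mixed-stability statement; your proposal asserts a stronger conclusion than the argument supports. To fix this, you would need to either (i) formulate a mixed-stability statement as the paper does, explicitly isolating the $O(\epsilon_{qr})$ forward error coming from $\widetilde Q_{A_{22}}-\widehat Q_{A_{22}}$ and from the COD acting on the identity columns, or (ii) show that the COD backward error on $(\square\;\;\triangle)$ can be restricted to the $\square$ columns only (which it cannot, in general).
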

   
   We provide the proof with detailed discussion in the following two subsections, which  also contain the bounds on the backward errors.
\subsubsection{\underline{The first step}}	
	The backward error of the rank revealing factorization of $K$ is as explained in \S \ref{SS=RRQR-Back-err}; it contains both the floating point error $\delta K$ and the truncation error $\Delta K$ analogous to (\ref{eq:M:truncate:backw}), i.e. $(K+\delta K + \Delta K)\widetilde{P}_K = \widehat{Q}_K\widetilde{R}_K = \widehat{Q}_{K,1}\widetilde{R}_{K,1}$.
	Set $\Delta_{\Sigma}K=\delta K + \Delta K$, and introduce block partitions $\widehat{Q}_K = (\widehat{Q}_{K,1}, \widehat{Q}_{K,2})$, where $\widehat{Q}_{K,1}$ has $\widetilde{r}_K$ columns, and $\widetilde{r}_K$ is the computed numerical rank of $K$.	
	
	Let $\widetilde{X}_{11} = computed(\widetilde{Q}_K^* C)$.
	By the standard error analysis of floating point matrix multiplication we know hat 
	$
	computed(\widetilde{Q}_K^* C) = \widetilde{Q}_K^* C + \mathfrak{G}_C,\;\;
	|\mathfrak{G}_C| \leq \epsilon_{*}|\widetilde{Q}_K^*| |C|$, where $0\leq\epsilon_* \leq O(n)\roff$.
	Here we assume that $\widetilde{Q}_K$ is explicitly computed\footnote{As in LAPACK's subroutines \texttt{xORGQR}.} and then applied. If $\widetilde{Q}_K$ is stored and applied in factored form (using Householder vectors),\footnote{See e.g. LAPACK's subroutines \texttt{xORMQR}.} similar relation holds, but with more technical details, which we omit for the sake of brevity. 
	Since we can represent the numerically unitary $\widetilde{Q}_K$ as  $\widetilde{Q}_K = (\Id + \mathfrak{E}_C)\widehat{Q}_K$, $\|\mathfrak{E}_C\|_2\leq \epsilon_{qr}$, 
	$$
	computed(\widetilde{Q}_K^* C) = \widehat{Q}_K^* (\Id +\mathfrak{E}_C^*) C + \mathfrak{G}_C = \widehat{Q}_K^* ( C + \mathfrak{E}_C^* C + \widehat{Q}_K\mathfrak{G}_C) \equiv \widehat{Q}_K^* ( C + \delta C) ,
	$$
	with column-wise estimates $\|\delta C(:,i)\|_2 \leq ( \|\mathfrak{E}_C^*\|_2+\epsilon_* \sqrt{n}(1+\|\mathfrak{E}_C^*\|_2)) \| C(:,i)\|_2$.
	
	By the same reasoning we get $\widetilde{Y}_{11}\equiv computed(\widetilde{Q}_K^* M) = \widehat{Q}_K(M+\delta M)$, where for each $i=1,\ldots, n$, $\|\delta M(:,i)\|_2 \leq \epsilon_M \|M(:,i)\|_2$, and $\epsilon_M = (\epsilon_{qr}+\epsilon_*\sqrt{n}(1+\epsilon_{qr})).$
	
 Hence, we can represent the computed equivalent of (\ref{eq:P1(..)Q1}) as  
%
\vspace{-2mm}
	\begin{eqnarray*}
		\!\!\!&&\!\!\! \left(\begin{smallmatrix} \widehat{Q}_K^* & \0 \cr \0 & \widehat{Q}_K^* \end{smallmatrix}\right)\! \overbrace{\left\{ \begin{pmatrix} C+\delta C & -\Id_n \cr K+\Delta_{\Sigma} K & \0\end{pmatrix} -\lambda \begin{pmatrix} -M-\delta M & \0 \cr \0 & -\Id_n \end{pmatrix}\right\}\!}^{A+\delta A - \lambda(B+\delta B)} \left(\begin{smallmatrix} \Id_n & \0 \cr \0 & \widehat{Q}_K \end{smallmatrix}\right) 
		 \\
		\!\!\!&=& \!\!
		\left( \begin{array}{c|||c}  \widetilde{X}_{11} &   -\Id_{n}  \cr\hline\hline\hline \begin{array}{c} \widetilde{R}_{K,1}\widetilde{P}^T_K \cr\hline \0_{n-\widetilde{r}_K,n} \end{array} & 
			\begin{array}{c|c} \0_{\widetilde{r}_K,\widetilde{r}_K} & \0_{\widetilde{r}_K,n-\widetilde{r}_K}\cr\hline \0_{n-\widetilde{r}_K,\widetilde{r}_K} & \0 \end{array}\end{array} \right) 
		-\lambda \left(\begin{array}{c|||c} -\widetilde{Y}_{11} & \0 \cr\hline\hline\hline \0 & \begin{array}{c|c} -\Id_{\widetilde{r}_K} & \0 \cr\hline \0 &-\Id_{n-\widetilde{r}_K}\end{array}\end{array}\right) .
	\end{eqnarray*}
	
\subsubsection{\underline{The second step}}
In the next step, we compute the rank revealing factorization (\ref{eq:intermediateMatrix}) of the computed matrix, or, equivalently, 
\begin{equation}
	\begin{pmatrix}
	\widehat{Q}_{K,2}^*(C+\delta C)\cr
	\widetilde{R}_{K,1}\widetilde{P}_K^T
	\end{pmatrix}\Pi_A = Q_{A_{22}}R_{A_{22}}.\;\;(\mbox{Note that } \left(\begin{smallmatrix}
		\widehat{Q}_{K,2}^*(C+\delta C)\cr
		\widetilde{R}_{K,1}\widetilde{P}_K^T
		\end{smallmatrix}\right) = \left(\begin{smallmatrix}
		\widetilde{X}_{11}(\widetilde{r}_K+1:n,1:n)\cr
		\widetilde{R}_{K,1}\widetilde{P}_K^T
		\end{smallmatrix}\right).)
\end{equation}
For the computed factors $\widetilde{\Pi}_A, \widetilde{Q}_{A_{22}}, \widetilde{R}_{A_{22}}$ of floating point implementation it holds that
\begin{equation}\label{eq:backerr-push-back}
	\left[ \begin{pmatrix}
	\widehat{Q}_{K,2}^*(C+\delta C)\cr
	\widetilde{R}_{K,1}\widetilde{P}_K^T
	\end{pmatrix} + \begin{pmatrix}
	\mathfrak{C}\\
	\mathfrak{K}
	\end{pmatrix} \right] \widetilde{\Pi}_A = \widehat{Q}_{A_{22}}\widetilde{R}_{A_{22}} \equiv \begin{pmatrix}
	\widehat{Q}_{K,2}^*(C+\delta C + \widehat{Q}_{K,2}\mathfrak{C})\cr
	\widehat{Q}_{K,1}^*(K + \Delta_{\Sigma}K + \widehat{Q}_{K,1} \mathfrak{K})
	\end{pmatrix},
\end{equation}	 
where $\widehat{Q}_{A_{22}}$ is exactly unitary and close to the computed unitary factor $\widetilde{Q}_{A_{22}}$, and $\left(\begin{smallmatrix}
\mathfrak{C}\\
\mathfrak{K}
\end{smallmatrix}\right)$ is the backward error.
The last relation in (\ref{eq:backerr-push-back}) illustrates how to push the backward errors $\mathfrak{C}$, $\mathfrak{K}$ into the original data.

\begin{eqnarray}
&& \left(\begin{smallmatrix} \Id_{\widetilde{r}_K} & \0 & \0 \cr \0 & \widehat{Q}_{A_{22}}^* & \0 \cr \0 & \0 & \Id_{n-\widetilde{r}_K} \end{smallmatrix}\right)
\left(\begin{smallmatrix} \widehat{Q}_K^* & \0 \cr \0 & \widehat{Q}_K^* \end{smallmatrix}\right)\! \begin{pmatrix} C+\delta C +\widehat{Q}_{K,2}\mathfrak{C} & -\Id_n \cr K+\Delta_{\Sigma} K +\widehat{Q}_{K,1} \mathfrak{K} & \0\end{pmatrix} \left(\begin{smallmatrix} \Id_n & \0 \cr \0 & \widehat{Q}_K \end{smallmatrix}\right) \label{eq:A-part-backw}\\
&=& \left( \begin{array}{c|c}  \widetilde{X}_{11}(1:\widetilde{r}_K,1:n) &   \begin{array}{c||c} -\Id_{\widetilde{r}_K} & \0_{\widetilde{r}_K,n-\widetilde{r}_K} \end{array}  \cr\hline \begin{array}{c} \widetilde{R}_{A_{22},1}\widetilde{P}^T_{A_{22}} \cr\hline \0 \cr \hline\hline\0_{n-\widetilde{r}_K,n} \end{array} & 
\begin{array}{c||c} \0_{\widetilde{r}_{22},\widetilde{r}_K} & \widehat{Q}_{A_{22}}^*(1:\widetilde{r}_{22},1:n-\widetilde{r}_K) \cr\hline
\0_{n-\widetilde{r}_{22},\widetilde{r}_K} & \widehat{Q}_{A_{22}}^*(\widetilde{r}_{22}+1:n,1:n-\widetilde{r}_K) \cr\hline\hline \0_{n-\widetilde{r}_K,\widetilde{r}_K} & \0 \end{array}\end{array} \right) \\
&\approx&
\left( \begin{array}{c|c}  \widetilde{X}_{11}(1:\widetilde{r}_K,1:n) &   \begin{array}{c||c} -\Id_{\widetilde{r}_K} & \0_{\widetilde{r}_K,n-\widetilde{r}_K} \end{array}  \cr\hline \begin{array}{c} \widetilde{R}_{A_{22},1}\widetilde{P}^T_{A_{22}} \cr\hline \0 \cr \hline\hline\0_{n-\widetilde{r}_K,n} \end{array} & 
\begin{array}{c||c} \0_{\widetilde{r}_{22},\widetilde{r}_K} & \widetilde{Q}_{A_{22}}^*(1:\widetilde{r}_{22},1:n-\widetilde{r}_K) \cr\hline
\0_{n-\widetilde{r}_{22},\widetilde{r}_K} & \widetilde{Q}_{A_{22}}^*(\widetilde{r}_{22}+1:n,1:n-\widetilde{r}_K) \cr\hline\hline \0_{n-\widetilde{r}_K,\widetilde{r}_K} & \0 \end{array}\end{array} \right). \label{eq:A-part-forward-mixed} 
\end{eqnarray} 
This allows a mixed stability interpretation indicated by $"\approx"$ above: if the computed part of the matrix that contains $\widetilde{Q}_{A_{22}}$ is changed by using the corresponding submatrices of $\widehat{Q}_{A_{22}}$ instead, then we have backward stability.\footnote{This mixed formulation is preferred because it allows leaving the identity block unchanged, which leaves the underlying structure of the quadratic pencil intact. Besides,  $\widehat{Q}_{A_{22}}$ and $\widetilde{Q}_{A_{22}}$ are $O(\varepsilon_{qr})$ close.} Further, the backward stability holds for the reduced matrix (the leading $(n+\widetilde{r}_K)\times (n+\widetilde{r}_K)$) block.

Consider now the $B$--part of the pencil. The effective change is
\begin{eqnarray}
\!\! && computed(\widetilde{Q}_{A_{22}}^* \begin{pmatrix} -\widetilde{Y}_{11}(\widetilde{r}_K+1:n,1:n) & \0 \cr \0 & -\Id_{\widetilde{r}_K}\end{pmatrix}) \equiv \widetilde{Y}_{21}\equiv 
\widehat{Y}_{21} + \delta\widehat{Y}_{21}\\ \!\!&=&\!\!
 \widehat{Q}_{A_{22}}^* \!\!\begin{pmatrix} -\widehat{Q}_{K,2}^* (M+\delta M + \widehat{Q}_{K,2}\mathfrak{E}) & \0 \cr \0 & -\Id_{\widetilde{r}_K}\end{pmatrix} \!\! +\!\! \begin{pmatrix} \0 & \delta\mathfrak{Q}(1:n-\widetilde{r}_K,n-\widetilde{r}_K+1:n) \cr \0 & \delta\mathfrak{Q}(n-\widetilde{r}_K+1:n,n-\widetilde{r}_K+1:n) \end{pmatrix}
\nonumber 
\end{eqnarray}
where $\delta\mathfrak Q = \widetilde{Q}_{A_{22}}^* \!\! -\! \widehat{Q}_{A_{22}}^*$.
Note that $\widetilde{Y}_{11}(1:\widetilde{r}_K,1:n)=\widehat{Q}_{K,1}^* (M+\delta M) = \widehat{Q}_{K,1}^* (M+\delta M + \widehat{Q}_{K,2}\mathfrak{E})$. Hence, we can push $\widehat{Q}_{K,2}\mathfrak{E}$ backward into $M$ without affecting the first $\widetilde{r}_K$ rows in the $B$--part. 
Here too we have a mixed stability interpretation
\begin{eqnarray}
\!\!\!&& \left(\begin{smallmatrix} \Id_{\widetilde{r}_K} & \0 & \0 \cr \0 & \widehat{Q}_{A_{22}}^* & \0 \cr \0 & \0 & \Id_{n-\widetilde{r}_K} \end{smallmatrix}\right)
\left(\begin{smallmatrix} \widehat{Q}_K^* & \0 \cr \0 & \widehat{Q}_K^* \end{smallmatrix}\right)\! 
\begin{pmatrix} -(M+\delta M + \widehat{Q}_{K,2}\mathfrak{E}) & \0 \cr \0 & -\Id_n \end{pmatrix}
 \left(\begin{smallmatrix} \Id_n & \0 \cr \0 & \widehat{Q}_K \end{smallmatrix}\right) \label{eq:B-part-backw} \\
\!\!\! & = & \left(\begin{smallmatrix} \Id_{\widetilde{r}_K} & \0 & \0 \cr \0 & \widehat{Q}_{A_{22}}^* & \0 \cr \0 & \0 & \Id_{n-\widetilde{r}_K} \end{smallmatrix}\right)  \left(\begin{array}{c|||c}  \begin{array}{l}-\widetilde{Y}_{11}(1:\widetilde{r}_K,1:n) \cr\hline -\widehat{Q}_{K,2}^* (M+\delta M + \widehat{Q}_{K,2}\mathfrak{E}) \end{array} & \0 \cr\hline\hline\hline \0 & \begin{array}{c|c} -\Id_{\widetilde{r}_K} & \0 \cr\hline \0 &-\Id_{n-\widetilde{r}_K}\end{array}\end{array}\right) \cr
\!\!\! & = &\left(\begin{array}{c|||c}  \begin{array}{c}-\widetilde{Y}_{11}(1:\widetilde{r}_K,1:n) \cr\hline \widetilde{Y}_{21}(1:n-\widetilde{r}_K,1:n) \end{array} & \begin{array}{c|c} \0 & \0 \cr\hline \widetilde{Y}_{21}(1:n-\widetilde{r}_K,n+1:n+\widetilde{r}_K) & \0 \end{array} \cr\hline\hline\hline \begin{array}{c} \widetilde{Y}_{21}(n-\widetilde{r}_K+1:n,1:n) \cr\hline \0 \end{array} & \begin{array}{c|c} \widetilde{Y}_{21}(n-\widetilde{r}_K+1:n,n+1:n+\widetilde{r}_K) & \0 \cr\hline \0 &-\Id_{n-\widetilde{r}_K}\end{array}\end{array}\right) \nonumber \\
&+& 
\left(\begin{array}{c|||c}  \begin{array}{c}\0 \cr\hline \0 \end{array} & \begin{array}{c|c} \0 & \0 \cr\hline \delta\widehat{Y}_{21}(1:n-\widetilde{r}_K,n+1:n+\widetilde{r}_K) & \0 \end{array} \cr\hline\hline\hline \begin{array}{c} \0 \cr\hline \0 \end{array} & \begin{array}{c|c} \delta\widehat{Y}_{21}(n-\widetilde{r}_K+1:n,n+1:n+\widetilde{r}_K) & \0 \cr\hline \0 &\0\end{array}\end{array}\right) \label{eq:B-part-forward-mixed}
\end{eqnarray}
Hence, up to an $O(\varepsilon_{qr})$ forward error (\ref{eq:B-part-forward-mixed}), the computed $B$--part satisfies the backward perturbation relation (\ref{eq:B-part-backw}) with exactly unitary transformations. This, together with (\ref{eq:A-part-backw}), (\ref{eq:A-part-forward-mixed}) establishes a mixed stability interpretation of the second reduction step.

Mixed stability can be replaced by pure backward stability as follows. In (\ref{eq:backerr-push-back}), write $\widetilde{Q}_{A_{22}}\widetilde{R}_{A_{22}}$ instead of $\widehat{Q}_{A_{22}}\widetilde{R}_{A_{22}}$; this only changes the backward error $\left( \begin{smallmatrix} \mathfrak{C}\cr \mathfrak{K}\end{smallmatrix}\right)$. Then repeat (\ref{eq:A-part-backw})--(\ref{eq:A-part-forward-mixed}) and (\ref{eq:B-part-backw})--(\ref{eq:B-part-forward-mixed}) (with $\widehat{Q}_{A_{22}}\widetilde{R}_{A_{22}}$ replaced by $\widetilde{Q}_{A_{22}}\widetilde{R}_{A_{22}}$) with updated $\mathfrak{E}$ and $\delta\widehat{Y}_{21}=\0$.

\subsubsection{Comments on the sizes of the backward errors $\mathfrak{C}$ and $\mathfrak{K}$}
The backward error of the rank revealing QR factorization (\ref{eq:backerr-push-back}) is column-wise bounded by 
\begin{equation}
\left\| \begin{pmatrix}
\mathfrak{C}\\
\mathfrak{K}
\end{pmatrix} (:,i) \right\|_2 \leq \epsilon_{qr} \left\| \begin{pmatrix}
\widehat{Q}_{K,2}^*(C+\delta C)\\ \hline
\widetilde{R}_K\widetilde{P}_K^T
\end{pmatrix} (:,i) \right\|_2, \;\;i=1,\ldots, n.
\end{equation}
Here, for simplicity, we assume full rank case, so the backward error above does not contain the truncation part. The backward errors in $C$ and $K$ are then bounded by
\begin{displaymath}
\max\{ \| \mathfrak{C}(:,i)\|_2, \| \mathfrak{K}(:,i)\|_2 \}  \leq \epsilon_{qr} \sqrt{2} \max \left( \|\widehat{Q}_{K,2}^*(C+\delta C)(:,i)\|_2, \|\widetilde{R}_K\widetilde{P}_K^T(:,i)\|_2  \right).
\end{displaymath}
Since both $\| \mathfrak{C}(:,i)\|_2$, $\| \mathfrak{K}(:,i)\|_2$ contain mixing of $C$ and $K$, we see the benefits of scaling and balancing which then help keeping the error in each matrix small relative to its corresponding part/column.

\section{Putting it all together: the global structure of \texttt{KVADeig}}\label{S=KVADEIG-global}
Here, we describe the global structure of the new procedure. We assume that the initial scaling and balancing are done as requested by an expert user. 
\subsection{Full reduction scheme}\label{SS=FULL-REDUCTION}

We will always attempt to deflate zero eigenvalues. This means that if there are infinite eigenvalues, we will switch to the reversed problem. If both zero and infinite eigenvalues are present, we  work with the problem in which the number of zero eigenvalues is higher, and the structure is used in the deflation process of this eigenvalue. However, the information about the known number of infinite eigenvalues is transmitted in the next steps of the Algorithm \ref{a:Deflation0}.
The first step are the rank revealing decompositions of the matrices $M$ and $K$. Then we have the following cases.
\paragraph{1. Both matrices $M$ and $K$ are regular} In this case, there is no deflation. However, the rank revealing decomposition of $M$ is used to reduce the matrix $B$ to upper triangular form, as in \texttt{quadeig}, to facilitate more efficient run of the QZ algorithm.
\begin{equation*}
 \begin{pmatrix} Q_M^* & \0 \cr \0 & \Id_n \end{pmatrix} \!\!\left\{ \!\left(\begin{smallmatrix} C & -\Id_n \cr K & \0\end{smallmatrix}\right)\!\! -\!\!{\lambda} \left(\begin{smallmatrix} -M & \0 \cr \0 & -\Id_n \end{smallmatrix}\right)\!\right\}\!\!\begin{pmatrix} \Pi_M & \0 \cr \0 & \Id_n \end{pmatrix}\!\! = \!\!
\left(\begin{array}{c|||c} Q_M^* C\Pi_M & -Q_M^* \cr\hline\hline\hline K\Pi_M & \0 \end{array}\right) \!\!-\!\!\lambda \left(\begin{array}{c|||c} -R_M & \0 \cr\hline\hline\hline \0 & -\Id_n\end{array}\right)\!\!. \label{eq:LGEVP:MK-reg}
\end{equation*}

\paragraph{2. One of the matrices $M$ or $K$ is singular} Assume that $K$ is singular. Otherwise, switch to the reversed problem.
Before we start the deflation process of the $n-\rank(K) = n-r_K$ zero eigenvalues, we check whether there is more than one Jordan block for this eigenvalue, that is, we compute the numerical rank of the $n\times n$ block matrix $\left(\begin{smallmatrix}
Q^*_{K,2}C \cr \hline
{R}_{K,1}P^T_K
\end{smallmatrix}\right)$, see (\ref{eq:intermediateMatrix}). Depending on the computed rank, we have two cases.
\paragraph{2.1. Full rank} There are exactly $n\! -\! r_K$ zero eigenvalues. The deflation is done as in \texttt{quadeig}, i.e. along with the deflation, $B$ is reduced to upper triangular form.
\begin{align}
& \begin{pmatrix} Q_M^* & \0 \cr \0 & Q_K^* \end{pmatrix}\!\! \left\{\!\! \left(\begin{smallmatrix} C & -\Id_n \cr K & \0\end{smallmatrix}\right)\!\! -\!\!\lambda \left(\begin{smallmatrix} -M & \0 \cr \0 & -\Id_n \end{smallmatrix}\right)\!\!\right\}\!\!\begin{pmatrix} \Pi_M & \0 \cr \0 & Q_K \end{pmatrix}\!\!\! = \!\!\!
{\footnotesize\left(\begin{array}{c|||c} Q_M^* C\Pi_M & -Q_M^* Q_K \cr\hline\hline\hline 
\begin{array}{c} \widehat{R}_K\Pi_K^T\Pi_M \cr\hline \0_{n-r_K,n} \end{array} & \0 \end{array}\right) \!\!-\!\!\lambda \left(\begin{array}{c|||c} -R_M & \0 \cr\hline\hline\hline \0 & -\Id_n\end{array}\right)}  \nonumber\\
&\equiv 
\left( \begin{array}{c|||c}  X_{11} &  \begin{array}{c|c} X_{12} & X_{13} \end{array} \cr\hline\hline\hline \begin{array}{c} X_{21} \cr\hline \0_{n-r_K,n} \end{array} & 
\begin{array}{c|c} \0_{r_K,r_K} & \0_{r_K,n-r_K}\cr\hline \0_{n-r_K,r_K} & \0 \end{array}\end{array} \right) 
-\lambda \left(\begin{array}{c|||c} -R_M & \0 \cr\hline\hline\hline \0 & \begin{array}{c|c} -\Id_{r_K} & \0 \cr\hline \0 &-\Id_{n-r_K}\end{array}\end{array}\right)\label{eq:LGEVP:M-reg} .
\end{align}
The reduced $(n+r_K)\times (n+r_K)$ pencil is
\begin{equation}\label{eq:LGEVP:n+rK}
A -\lambda B = \left( \begin{array}{c|||c} 
X_{11} & X_{12} \cr\hline\hline\hline
X_{21} & \0_{r_K,r_K} \end{array}\right) - \lambda \left( \begin{array}{c|||c}
-R_M & \0 \cr\hline\hline\hline \0 & -\Id_{r_K}\end{array}\right),\;\;\begin{array}{l}
X_{11} = Q_M^* C\Pi_M \cr X_{12}= -(Q_M^* Q_K)(:,1:r_K)\cr
X_{21} = \widehat{R}_K\Pi_K^T\Pi_M
\end{array} .
\end{equation}
\paragraph{2.2. Rank deficient} This means that there is more than one Jordan block for the zero eigenvalue, more precisely there are at least $n-\rank({{\left(\begin{smallmatrix}
Q^*_{K,2}C \cr \hline
{R}_{K,1}P^T_K
\end{smallmatrix}\right)}}) + n-r_K = 2n-r_{22}-r_K$ zero eigenvalues. The first two blocks are deflated as described in \S \ref{S=New-Deflation}. Let 
(\ref{eq:SecondTransformation})
be the transformed pencil, after the two steps of the deflation. Now, the rank of the matrix $A_{33}$ determines whether there are more zero eigenvalues. The full deflation of zero eigenvalues is finished by calling the Algorithm \ref{a:Deflation0} on the truncated pencil $A_{33}-\lambda B_{33}$. As an output of the algorithm, we get the reduced pencil $A_{\ell+1,\ell+1}-\lambda B_{\ell+1,\ell+1}$ with $A_{\ell+1,\ell+1}$ regular, and the corresponding transformation matrices $Q_p$ and $P_p$ of order $r_K+r_{22}$. 
The final transformation matrices $Q$ and $P$ are
\begin{align}
	Q &= \left(\begin{array}{c |||c}
	\Id_n & \0 \cr \hline\hline\hline
	\0 & Q_K
	\end{array}\right) \left(\begin{array}{c c}
	V^*_BP_B & \0_{n+r_K,n-r_K}\\
	\0_{n-r_K,n+r_K} & \Id_{n-r_K}
	\end{array}\right)\diag(Q_p, \Id_{2n-r_K-r_{22}}) \\
	P &= \diag(P_p,\Id_{2n-r_K-r_{22}}) \left(\begin{array}{c c c}
	\Id_{r_K} & \0_{r_K,n} & \0_{r_K,n-r_K}\\
	\0_{n,r_K} & Q^*_{A_{22}} & \0_{n,n-r_K} \\
	\0_{n-r_K,r_K} & \0_{n-r_K,n}& \Id_{n-r_K}
	\end{array} \right) \left(\begin{array}{c |||c}
	Q^*_K & \0 \cr \hline\hline\hline
	\0 & Q^*_K
	\end{array}\right).
\end{align}
Now, the remaining nonzero eigenvalues are computed using the QZ algorithm on the truncated pencil $A_{\ell+1,\ell+1}-\lambda B_{\ell+1,\ell+1}$.
\paragraph{3. Both matrices $M$ and $K$ are singular}
In this case,  we first check the ranks of both block matrices
$		\left(\begin{smallmatrix}
		Q^*_{K,2}C \cr \hline
		\widehat{R}_KP^T_K
		\end{smallmatrix}\right), \;\;\left(\begin{smallmatrix}
		Q^*_{M,2}C \cr \hline
		\widehat{R}_MP^T_M
		\end{smallmatrix}\right).
$
The ranks of these matrices determine whether there exist more than one Jordan block for the zero and the infinite eigenvalue, respectively. Depending on their regularity, we distinguish the following three cases.
\paragraph{3.1. Both matrices are regular} This means that there are exactly $n-r_M$ infinite, and $n-r_K$ zero eigenvalues. The deflation of these eigenvalues is done analogously to the \texttt{quadeig} algorithm. For the reader's convenience, we provide the details. Let
\begin{align}
& \begin{pmatrix} Q_M^* & \0 \cr \0 & Q_K^* \end{pmatrix} \!\!\left\{\!\! \left(\begin{smallmatrix} C & -\Id_n \cr K & \0\end{smallmatrix}\right)\!\! -\!\!\lambda \left(\begin{smallmatrix} -M & \0 \cr \0 & -\Id_n \end{smallmatrix}\right)\!\!\right\}\!\!\begin{pmatrix} \Id_n & \0 \cr \0 & Q_K \end{pmatrix}\!\!\! =\!\!\!
{\footnotesize\left( \begin{array}{c||| c} Q_M^* C & -Q_M^* Q_K \cr\hline\hline\hline
\widehat{R}_K\Pi_K^T & \0_{r_K,n} \cr \0_{n-r_k,n} & \0_{n-r_K,n}\end{array}\right) \!\!- \!\!\lambda\!\! \left(\!\! \begin{array}{c||| c} -\widehat{R}_M\Pi_M^T & \0_{r_M,n} \cr 
\0_{n-r_M,n} & \0_{n-r_M,n} \cr\hline\hline\hline \0_{n,n} & -\Id_n \end{array}\!\!\right)}\nonumber \\
\!\!\!  &\equiv \!\!\!
{\small  \left(\!\! \begin{array}{c|||c}  \begin{array}{c|c} X_{11} & X_{12}\cr\hline X_{21} & X_{22}\end{array} &  \begin{array}{c|c} X_{13} & X_{14}\cr\hline X_{23} & X_{24}\end{array}  \cr\hline\hline\hline \begin{array}{c|c} X_{31} & X_{32}  \cr\hline \0 & \0 \end{array} & 
	\begin{array}{c|c} \0 & \0\cr\hline \0 & \0 \end{array}\end{array}\!\! \right)} \!\!-\! \lambda\!
{\small \left(\!\! \begin{array}{c|||c}  \begin{array}{c|c} Y_{11} & Y_{12}\cr\hline \0_{n-r_M,r_M} & \0_{n-r_M,n-r_M}\end{array} &  \begin{array}{c|c} \0_{r_M,r_K} & \0_{r_M,n-r_K} \cr\hline \0_{n-r_M,r_K} & \0_{n-r_M,n-r_K}\end{array}  \cr\hline\hline\hline  \begin{array}{c|c} \0_{r_K,r_M} & \0_{r_K,n-r_M}  \cr\hline \0_{n-r_K,r_M} & \0_{n-r_K,n-r_M} \end{array} & 
	\begin{array}{c|c} -\Id_{r_K} & \0_{r_K,n-r_K}\cr\hline \0_{n-r_K,r_K} & -\Id_{n-r_K} \end{array}\end{array} \!\!\right)}  \label{eq:X-lY}
\end{align} 
Note the difference in the transformation from the right with \eqref{eq:LGEVP:M-reg}: instead of $\Pi_M$, we now have $\Id_n$, so that $Q_M^* M = R_M\Pi_M^T$ is not upper triangular. Preserving the triangular form in this moment does not seem important because it is likely that it will be destroyed in subsequent steps.
In the next step, the complete orthogonal decomposition (i.e. URV decomposition, using unitary matrices $Q_X$ and $Z_X$) is computed as
\begin{equation*}
Q_X^* \begin{pmatrix} X_{21} & X_{22} & X_{23}\end{pmatrix} Z_X^* \begin{pmatrix} \0 & \Id_{n-r_M}\cr \Id_{r_M+r_K} & \0\end{pmatrix} = \begin{pmatrix} \0_{n-r_M,r_M+r_K} & R_X\end{pmatrix} .
\end{equation*}  
Then (\ref{eq:X-lY}) can be further transformed as follows:
\begin{eqnarray}
\!\!\!&&\begin{pmatrix} \Id_{r_M} & \0 & \0 & \0  \cr\hline
\0 & \0 & \Id_{r_K} & \0 \cr
\0 & Q_X^* & \0 & \0 \cr\hline
\0 & \0 & \0 & \Id_{n-r_K}\end{pmatrix}
\left( \begin{array}{c|||c}  \begin{array}{c|c} X_{11} & X_{12}\cr\hline X_{21} & X_{22}\end{array} &  \begin{array}{c|c} X_{13} & X_{14}\cr\hline X_{23} & X_{24}\end{array}  \cr\hline\hline\hline \begin{array}{c|c} X_{31} & X_{32}  \cr\hline \0 & \0 \end{array} & 
\begin{array}{c|c} \0 & \0\cr\hline \0 & \0 \end{array}\end{array} \right)
\left(\begin{array}{c|c} Z_X^* \left(\begin{smallmatrix} \0 & \Id_{n-r_M}\cr \Id_{r_M+r_K} & \0\end{smallmatrix}\right) & \0 \cr\hline \0 & \Id_{n-r_K} \end{array}\right) \nonumber \\
&\!\!=\!\!& \left( \begin{array}{c||c}  \begin{array}{c|c} \widetilde{X}_{11} & \widehat{X}_{12}\cr\hline \widetilde{X}_{21} & \widehat{X}_{22}\end{array} &  \begin{array}{c|c} \widehat{X}_{13} & X_{14}\cr\hline \widehat{X}_{23} & \0\end{array}  \cr\hline\hline \begin{array}{c|c} \0 & \0  \cr\hline \0 & \0 \end{array} & 
\begin{array}{c|c} R_X & \widetilde{X}_{24}\cr\hline \0 & \0 \end{array}\end{array} \right)\!,\;
\mbox{where}\;
\begin{array}{l}
\left(\begin{smallmatrix}
\widetilde{X}_{11} & \widetilde{X}_{12} & \widetilde{X}_{13}  \cr
\widetilde{X}_{21} & \widetilde{X}_{22} & \widetilde{X}_{23} \end{smallmatrix}
\right)=
\left(\begin{smallmatrix}
{X}_{11} & {X}_{12} & {X}_{13}\cr  {X}_{31} & {X}_{32} & \0 \end{smallmatrix}\right)
Z_X^*\left(\begin{smallmatrix} \0 & \Id_{n-r_M}\cr \Id_{r_M+r_K} & \0\end{smallmatrix}\right) \cr
\begin{smallmatrix} \widetilde{X}_{24} = Q_X^* X_{24} \end{smallmatrix} .
\end{array}
\nonumber
\end{eqnarray} 
The $(1,1)$ diagonal block in the new partition ($=\!\parallel\!=$) is $(r_M+r_K)\times(r_M+r_K)$, and 
\begin{itemize}
	\item \framebox{$n-r_M=r_K$}: $\widehat{X}_{ij}=\widetilde{X}_{ij}$, $i=1,2$, $j=2,3$;
	\item \framebox{$n-r_M>r_K$}: $\widehat{X}_{12}=\widetilde{X}_{12}(:,1:r_K)$, 
	$\widehat{X}_{13}=( \widetilde{X}_{12}(:,r_K+1:n-r_M),\;\widetilde{X}_{13})$, 
	
	$\widehat{X}_{22}=\widetilde{X}_{22}(:,1:r_K)$, $\widehat{X}_{23}=( \widetilde{X}_{22}(:,r_K+1:n-r_M),\;\widetilde{X}_{23})$
	
	\item \framebox{$n-r_M < r_K$}: $\widehat{X}_{12} = ( \widetilde{X}_{12},\; \widetilde{X}_{13}(:,1:r_K+r_M-n))$, $\widehat{X}_{13}=\widetilde{X}_{13}(:,r_K+r_M-n+1:r_K)$
	
	$\widehat{X}_{22} = ( \widetilde{X}_{22},\; \widetilde{X}_{23}(:,1:r_K+r_M-n))$, $\widehat{X}_{23}=\widetilde{X}_{23}(:,r_K+r_M-n+1:r_K)$
\end{itemize}
On the right hand side, the transformation reads, analogously, 
\begin{eqnarray*}
&&\begin{pmatrix} \Id_{r_M} & \0 & \0 & \0  \cr\hline
\0 & \0 & \Id_{r_K} & \0 \cr
\0 & Q_X^* & \0 & \0 \cr\hline
0 & \0 & \0 & \Id_{n-r_K}\end{pmatrix}
Y
\left(\begin{array}{c|c} Z_X^* \begin{pmatrix} \0 & \Id_{n-r_M}\cr \Id_{r_M+r_K} & \0\end{pmatrix} & \0 \cr\hline \0 & \Id_{n-r_K} \end{array}\right) \\
&=& 
\left( \begin{array}{c||c}  
\begin{array}{c|c} \widetilde{Y}_{11} & \widehat{Y}_{12}\cr\hline \widetilde{Y}_{21}  & \widehat{Y}_{22} \end{array} &  \begin{array}{c|c} \widehat{Y}_{13} & \0_{r_M,n-r_K} \cr\hline \widehat{Y}_{23} & \0_{r_K,n-r_K}\end{array}  \cr\hline\hline \begin{array}{c|c} \0_{n-r_M,r_M} & \0_{n-r_M,n-r_M}  \cr\hline \0_{n-r_K,r_M} & \0_{n-r_K,n-r_M} \end{array} & 
\begin{array}{c|c} \0_{n-r_M,r_K} & \0_{n-r_M,n-r_K} \cr\hline \0 & -\Id_{n-r_K} \end{array}\end{array} \right) \\
&&\mbox{where}\;\; \begin{pmatrix}\widetilde{Y}_{11} & \widetilde{Y}_{12} & \widetilde{Y}_{13} \cr  \widetilde{Y}_{21} & \widetilde{Y}_{22} & \widetilde{Y}_{23} \end{pmatrix} = \begin{pmatrix} Y_{11} & Y_{12} & \0_{r_M,r_K} \cr \0_{r_k,r_M}  & \0_{r_K,n-r_M} & -\Id_{r_K}\end{pmatrix} Z_X^* \begin{pmatrix} \0 & \Id_{n-r_M}\cr \Id_{r_M+r_K} & \0\end{pmatrix}\;\mbox{and}
\end{eqnarray*}
\begin{itemize}
	\item \framebox{$n-r_M=r_K$}: $\widehat{Y}_{ij}=\widetilde{Y}_{ij}$, $i=1,2$, $j=2,3$;
	\item \framebox{$n-r_M>r_K$}: $\widehat{Y}_{12}=\widetilde{Y}_{12}(:,1:r_K)$, 
	$\widehat{Y}_{13}=( \widetilde{Y}_{12}(:,r_K+1:n-r_M),\;\widetilde{Y}_{13})$, 
	
	$\widehat{Y}_{22}=\widetilde{Y}_{22}(:,1:r_K)$, $\widehat{Y}_{23}=( \widetilde{Y}_{22}(:,r_K+1:n-r_M),\;\widetilde{Y}_{23})$
	
	\item \framebox{$n-r_M < r_K$}: $\widehat{Y}_{12} = ( \widetilde{Y}_{12},\; \widetilde{Y}_{13}(:,1:r_K+r_M-n))$, $\widehat{Y}_{13}=\widetilde{Y}_{13}(:,r_K+r_M-n+1:r_K)$
	
	$\widehat{Y}_{22} = ( \widetilde{Y}_{22},\; \widetilde{Y}_{23}(:,1:r_K+r_M-n))$, $\widehat{Y}_{23}=\widetilde{Y}_{23}(:,r_K+r_M-n+1:r_K)$
\end{itemize}
\begin{equation*}
\mbox{Hence, the equivalent pencil is }
\left( \begin{array}{c||c}  \begin{array}{c|c} \widetilde{X}_{11} & \widehat{X}_{12}\cr\hline \widetilde{X}_{21} & \widehat{X}_{22}\end{array} &  \begin{array}{c|c} \widehat{X}_{13} & X_{14}\cr\hline \widehat{X}_{23} & \0\end{array}  \cr\hline\hline \begin{array}{c|c} \0 & \0  \cr\hline \0 & \0 \end{array} & 
\begin{array}{c|c} R_X & \widetilde{X}_{24}\cr\hline \0 & \0 \end{array}\end{array} \right)
-\lambda 
\left( \begin{array}{c||c}  
\begin{array}{c|c} \widetilde{Y}_{11} & \widehat{Y}_{12}\cr\hline \widetilde{Y}_{21}  & \widehat{Y}_{22} \end{array} &  \begin{array}{c|c} \widehat{Y}_{13} & \0 \cr\hline \widehat{Y}_{23} & \0\end{array}  \cr\hline\hline \begin{array}{c|c} \0 & \0  \cr\hline \0 & \0 \end{array} & 
\begin{array}{c|c} \0 & \0 \cr\hline \0 & -\Id \end{array}\end{array} \right) ,
\end{equation*}
and it immediately reveals that the original quadratic pencil is singular if $\mathrm{det}(R_X)=0$. Otherwise, we first identify the $n-r_K$ zero eigenvalues and the $n-r_M$ infinite ones, and the remaining ones are computed from  
the linear generalized eigenvalue problem 
of the $(r_M+r_K)\times (r_M+r_K)$ pencil
\begin{equation}\label{eq:deflated:A-lB}
A - \lambda B \equiv \left(\begin{array}{c|c} \widetilde{X}_{11} & \widehat{X}_{12}\cr\hline \widetilde{X}_{21} & \widehat{X}_{22}\end{array}\right) - \lambda
\left( \begin{array}{c|c} \widetilde{Y}_{11} & \widehat{Y}_{12}\cr\hline \widetilde{Y}_{21}  & \widehat{Y}_{22} \end{array} \right)
\end{equation} 
\paragraph{3.2. Only one of the matrices  is singular}
In any case, we use the structure of the linearization to deflate two Jordan blocks of the eigenvalue zero, meaning that if $\scriptscriptstyle \left(\begin{smallmatrix}
Q^*_{M,2}C \cr \hline
\widehat{R}_MP^T_M
\end{smallmatrix}\right)$ is singular, i.e if there are at least two Jordan block for infinite eigenvalues, we switch to the reversed problem.
Thus, we have the following: there are at least $n-\rank(K) + n-\rank(\left(\begin{smallmatrix}
Q^*_{K,2}C \cr \hline
\widehat{R}_KP^T_K
\end{smallmatrix}\right)) = n-r_K+n-r_{22}$ zero eigenvalues and exactly $n-r_M$ infinite eigenvalues.
First, two know Jordan blocks of zero eigenvalue are deflated as described in section \S \ref{S=New-Deflation}. As a result we get the transformed pencil as in \eqref{eq:SecondTransformation}.

As in the case 2.2, the existence of the additional zero eigenvalues depends on the rank of the matrix $A_{33}$, and deflation of zeros is completed by calling the Algorithm \ref{a:Deflation0} on the pencil $A_{33}-\lambda B_{33}$. The output arguments are the transformation matrices $Q_p, P_p$ of order $r_K+r_{22}$ and pencil $A_{\ell+1,\ell+1}-\lambda B_{\ell+1,\ell+1}$ with $A_{\ell+1,\ell+1}$ being regular.

At this point, we have removed all zero eigenvalues. It remains to remove $n-r_M$ infinite ones. This is done by using Algorithm \ref{a:Deflation0} on the reversed pencil $B_{\ell+1,\ell+1}-\lambda A_{\ell+1,\ell+1}$. Since we already determined that there is just one Jordan block for infinite eigenvalue, by computing the rank of $\left(\begin{smallmatrix}
Q^*_{M,2}C \cr \hline
\widehat{R}_MP^T_M
\end{smallmatrix}\right)$, we send, as input data of the Algorithm \ref{a:Deflation0}, the information about $\rank(B_{\ell+1,\ell+1}) = \rank(\left(\begin{smallmatrix}
Q^*_{M,2}C \cr \hline
\widehat{R}_MP^T_M
\end{smallmatrix}\right))$ and the number of the deflation steps, which is one. Finally, as the output we get a pencil $A_{\ell+2,\ell+2}-\lambda B_{\ell+2,\ell+2}$ with both $A_{\ell+2,\ell+2}$ and $B_{\ell+2,\ell+2}$ regular, and the transformation matrices $Q_{P1}, P_{P1}$.

\noindent Let $n_{\ell+1}$ be the dimension of $A_{\ell+1,\ell+1}\!-\!\lambda B_{\ell+1,\ell+1}$.The final transformations $Q, P$ are
\begin{align*}
	Q &= \left(\begin{array}{c |||c}
	\Id_n & \0 \cr \hline\hline\hline
	\0 & Q_K
	\end{array}\right) \left(\begin{array}{c c}
	V^*_BP_B & \0_{n+r_K,n-r_K}\\
	\0_{n-r_K,n+r_K} & \Id_{n-r_K}
	\end{array}\right)\diag(Q_p, \Id_{2n-r_K-r_{22}})\diag(Q_{p1}, \Id_{2n-n_{\ell+1}})\\
	P &=\diag(P_{p1}, \Id_{2n-n_{\ell+1}}) \diag(P_p,\Id_{2n-r_K-r_{22}}) \left(\begin{array}{c c c}
	\Id_{r_K} & \0_{r_K,n} & \0_{r_K,n-r_K}\\
	\0_{n,r_K} & Q^*_{A_{22}} & \0_{n,n-r_K} \\
	\0_{n-r_K,r_K} & \0_{n-r_K,n}& \Id_{n-r_K}
	\end{array} \right) \left(\begin{array}{c |||c}
	Q^*_K & \0 \cr \hline\hline\hline
	\0 & Q^*_K
	\end{array}\right).
\end{align*}

The remaining finite nonzero eigenvalues are computed using the QZ algorithm for the truncated pencil $A_{\ell+2,\ell+2}-\lambda B_{\ell+2,\ell+2}$.

\paragraph{3.3. Both matrices are singular}
The decision whether we consider original or the reversed problem depends on the total number of the detected zero and infinite eigenvalues. The goal is to use the structure of the linearization pencil to deflate the zeros. Hence, we will consider the problem with the higher number of the zeros.

Therefore, at the beginning of the deflation process have the following setting: there are at least $n-\rank(K) + n-\rank(\left(\begin{smallmatrix}
Q^*_{K,2}C \cr \hline
\widehat{R}_KP^T_K
\end{smallmatrix}\right)) = n-r_K+n-r_{22}$ zeros and $n-\rank(M) + n-\rank(\left(\begin{smallmatrix}
Q^*_{M,2}C \cr \hline
\widehat{R}_MP^T_M
\end{smallmatrix}\right))$ infinite eigenvalues.
First step is to deflate all detected zeros using the procedure described in the \S \ref{S=New-Deflation}. The resulting pencil is \eqref{eq:SecondTransformation}.

Possible extra zeros are deflated using the Algorithm \ref{a:Deflation0} on the $A_{33}-\lambda B_{33}$. As the output we get the pencil $A_{\ell +1, \ell +1}-\lambda B_{\ell+1,\ell+1}$ with $A_{\ell +1, \ell +1}$ regular, and $Q_p$ and $P_p$ transformation matrices. Denote with $n_{\ell+1}$ the dimension of the resulted pencil.

At this point, all the zero eigenvalues are deflated. Now, we want to deflate infinite eigenvalues. We already know the dimension of first two Jordan blocks, namely $n-r_M$ and $n-\rank(\left(\begin{smallmatrix}
Q^*_{M,2}C \cr \hline
\widehat{R}_MP^T_M
\end{smallmatrix}\right))$. This information is forwarded to the Algorithm \ref{a:Deflation0} along with the reversed pencil $ B_{\ell+1,\ell+1}-\lambda A_{\ell +1, \ell +1}$. In this case, the algorithm will not compute the numerical rank in the first two steps, but it will use the forwarded information. The output is the pencil $A_{\ell +\ell_1, \ell +\ell_1}-\lambda B_{\ell+1,\ell+1}$, with both $A_{\ell +\ell_1, \ell +\ell_1}$ and $B_{\ell+1,\ell+1}$ regular, and the transformation matrices $Q_{p1}$,  $Q_{p_2}$. 
The final transformation matrices $Q$, $P$ are 
\begin{align*}
Q &= \left(\begin{array}{c |||c}
\Id_n & \0 \cr \hline\hline\hline
\0 & Q_K
\end{array}\right) \left(\begin{array}{c c}
V^*_BP_B & \0_{n+r_K,n-r_K}\\
\0_{n-r_K,n+r_K} & \Id_{n-r_K}
\end{array}\right)\diag(Q_p, \Id_{2n-r_K-r_{22}})\diag(Q_{p1}, \Id_{2n-n_{\ell+1}})\\
P &=\diag(P_{p1}, \Id_{2n-n_{\ell+1}}) \diag(P_p,\Id_{2n-r_K-r_{22}})\!\! \left(\begin{array}{c c c}
\Id_{r_K} & \0_{r_K,n} & \0_{r_K,n-r_K}\\
\0_{n,r_K} & Q^*_{A_{22}} & \0_{n,n-r_K} \\
\0_{n-r_K,r_K} & \0_{n-r_K,n}& \Id_{n-r_K}
\end{array} \right) \!\!\!\left(\begin{array}{c |||c}
Q^*_K & \0 \cr \hline\hline\hline
\0 & Q^*_K
\end{array}\right)\! .
\end{align*}
The remaining (finite nonzero) eigenvalues are computed using the QZ algorithm on $A_{\ell +\ell_1, \ell +\ell_1}-\lambda B_{\ell+1,\ell+1}$.

\subsection{Eigenvector recovery}
The process of extracting the eigenvectors of the quadratic eigenvalue problem from those of the linearization is as follows (see also \cite{Hammarling:QUADEIG}).  The right eigenvectors $z$ and the left eigenvectors $w$ of the linearization $C_2(\lambda)$  are of the form
\begin{equation}\label{RightLeftEigenvector}
z = \left(\begin{array}{c}
z_1\cr \hline \hline \hline
z_2
\end{array}\right) = 
\left(\begin{array}{c}
\lambda x \cr
\hline \hline \hline
-Kx
\end{array}\right)\;\mbox{for}\;  \lambda \neq 0; \;\;
\left(\begin{array}{c}
\lambda x\cr \hline \hline \hline
C x
\end{array}\right)\;\mbox{for}\;  \lambda = 0;
\;\;\; w = \left(\begin{array}{c}
w_1\\ \hline \hline \hline
w_2
\end{array}\right) = \left(\begin{array}{c}
\lambda y\\ \hline \hline \hline 
y
\end{array}\right), 
\end{equation}	
where $x,y$ are the right and the left eigenvector for the quadratic problem. From this relation we see that, when the matrix $K$ is nonsingular, we have two choices for the right eigenvector, namely $z_1$ and $K^{-1}z_2$. Otherwise, we only have one choice,\footnote{Up to a nonzero scaling parameter.} $z_1$. 

The eigenvectors of $C_2(\lambda)$ are obtained from the eigenvectors $\widetilde{z}$, $\widetilde{w}$ of the transformed pencil $P C_2(\lambda)Q$ as $z=Q\widetilde{z}$, $w=P^*\widetilde{w}$ where $P$, $Q$ contain all transformations from the reduction process described in \S \ref{SS=FULL-REDUCTION}. For the readers convenience, we provide the details of assembling the eigenvectors.

\subsubsection{The right eigenvectors}
\paragraph{Case 1}
 The matrix $K$ is nonsingular. We have two choices for $x$. Let $\widetilde{z}$ be the right eigenvector for the transformed linearization. The corresponding right eigenvector for $C_2(\lambda)$ is
 $z = \left(\begin{array}{c}
 z_1\cr \hline \hline \hline
 z_2
 \end{array}\right) = \begin{pmatrix}
 \Pi_M & \mathbf{0}\\
 \mathbf{0} & \mathbb{I}_n
 \end{pmatrix} \left(\begin{array}{c}
 \widetilde{z}_1\cr \hline\hline\hline
 \widetilde{z}_2
 \end{array}\right) = \left(\begin{array}{c}
 \Pi_M \widetilde{z}_1\cr \hline\hline\hline
 \widetilde{z}_2
 \end{array}\right)$.
 Hence, the two candidates for the eigenvector $x$ are $\Pi_M\widetilde{z}_1$ and $K^{-1}\widetilde{z}_2$. We choose the one with smaller backward error.
\paragraph{Case 2.1}
The matrix $K$ is singular, and $n-r_K$ zero eigenvalues are deflated. Eigenvectors corresponding to those eigenvalues span the nullspace of matrix $K$. The basis for this nullspace is computed as orthogonal complement of the range of $K^*$ using the QR decomposition of the upper triangular matrix $\widehat{R}^*_K$, 
$\widehat{R}^*_K = Q_{\widehat{R}^*_K}R_{\widehat{R}^*_K}$.
The wanted vector is determined by the last $n-r_K$ columns of the orthogonal matrix $Q_{\widehat{R}^*_K}$.\\
The remaining eigenvalues and eigenvectors {$\widetilde{z}=\left( \begin{smallmatrix}\widetilde{z}_1\cr \widetilde{z}_2 \end{smallmatrix}\right) \in \mathbb{C}^{n+r_K}$} are computed from the transformed linearization (\ref{eq:LGEVP:n+rK}). The corresponding eigenvector for $C_2(\lambda)$ is
\begin{equation*}
\left(\begin{array}{c}
z_1\cr \hline\hline\hline
z_2
\end{array}\right) = \begin{pmatrix}
\Pi_M & \mathbf{0}\\
\mathbf{0} & Q_K
\end{pmatrix}\left(\begin{array}{c}
\widetilde{z}_1\cr\hline\hline\hline
\widetilde{z}_2\cr
0
\end{array}\right) = \left(\begin{array}{c}
\Pi_M \widetilde{z}_1\cr \hline\hline\hline
Q_K \begin{pmatrix}
\widetilde{z}_2\\
0
\end{pmatrix}
\end{array}\right).\;\;\mbox{Hence, $x=\Pi_M \widetilde{z}_1$.}
\end{equation*}
\paragraph{Case 3.1}
Both matrices $M$ and $K$ are singular, and $n-r_M$ infinite and $n-r_K$ zero eigenvalues are deflated. The eigenvectors for zero eigenvalues are obtained as in the case (2.1), whilst the eigenvectors for the infinite eigenvalues form a basis for the nullspace of the matrix $M$. The basis is obtained as the orthogonal complement of the range of $M^*$, represented by the last $n-r_M$ columns of the  orthogonal matrix $Q_{\widehat{R}^*_M}$ in the QR factorization
$\widehat{R}^*_M = Q_{\widehat{R}^*_M}R_{\widehat{R}^*_M}$.
The remaining eigenvalues and eigenvectors $\widetilde{z}\!\in\! \mathbb{C}^{r_K+r_M}\!$ are obtained from the pencil (\ref{eq:deflated:A-lB}). The corresponding eigenvector of $C_2(\lambda)$ is
\begin{equation*}
\left(\begin{array}{c}
z_1\cr \hline\hline\hline
z_2
\end{array}\right) = \begin{pmatrix}
\mathbb{I}_n & \mathbf{0}\\
\mathbf{0} & Q_K
\end{pmatrix}\begin{pmatrix}
Z^*_X & \mathbf{0}\\
\mathbf{0} & \mathbb{I}_{n-r_K}
\end{pmatrix} \!\!\begin{pmatrix}
\mathbf{0} & \mathbb{I}_{n-r_K} & \mathbf{0}\\
\mathbb{I}_{r_K+r_M} & \mathbf{0} & \mathbf{0}\\
\mathbf{0} & \mathbf{0} & \mathbb{I}_{n-r_K}
\end{pmatrix}\!\!\left(\begin{array}{c}
\widetilde{z}\cr \hline\hline
\0\cr
\0
\end{array}\right) = \begin{pmatrix}
\mathbb{I}_n & \mathbf{0}\\
\mathbf{0} & Q_K
\end{pmatrix}\!\!\begin{pmatrix}
Z^*_X \begin{pmatrix}
\0\\
\widetilde{z}
\end{pmatrix}\\
\0
\end{pmatrix}\! .
\end{equation*}
The eigenvector $x$ is represented by the first $n\!\times\! 1$ block of the $(n\!+\! r_K)\!\times\! 1$ vector $Z^*_X \begin{pmatrix}
\0\\
\widetilde{z}
\end{pmatrix}$.
\paragraph{Cases 2.2., 3.2. and 3.3} When Algorithm \ref{a:Deflation0} is used for the removal of additional zero and/or infinite eigenvalues, the eigenvector recovery procedure {is as follows}. Let $n_{\ell+j}$ be the dimension of the truncated pencil $A_{\ell+j,\ell+j}-\lambda B_{\ell+j,\ell+j}$, {$j=1,2,\ell_1$}. Let $\widetilde{z} \in \mathbb{R}^{n_{\ell+j}}$ be the computed right eigenvector  of $A_{\ell+j,\ell+j}-\lambda B_{\ell+j,\ell+j}$. If $n_{\ell+j}>n$, the right eigenvector is recovered as $x = Q(1:n,1:n)\widetilde{z}(1:n)$, and if $n_{\ell+j}<n$ then $x = Q(1:n,1:n_{\ell+\ell_1})\widetilde{z}$.

\subsubsection{The left eigenvectors}
\paragraph{Case 1}
Let $\widetilde{w}$ be the left eigenvector for the transformed linearization $PC_2(\lambda)Q$. The corresponding left eigenvector for the linearization $C_2(\lambda)$ is 
\begin{equation}
w = \left(\begin{array}{c}
w_1\cr \hline\hline\hline
w_2
\end{array}\right) = \begin{pmatrix}
Q_M & \mathbf{0}\\
\mathbf{0} & \mathbb{I}_n
\end{pmatrix} \left(\begin{array}{c}
\widetilde{w}_1\cr \hline\hline\hline
\widetilde{w}_2
\end{array}\right) = \left(\begin{array}{c}
Q_M\widetilde{w}_1\cr \hline\hline\hline
\widetilde{w}_2
\end{array}\right).
\end{equation}
The two candidates for the left eigenvector $y$ of quadratic problem are $Q_M\widetilde{w}_1$ and $\widetilde{w}_2$. We choose the one with smaller backward error.
\paragraph{Case 2.1}
The left eigenvectors for zero eigenvalues are represented by the last $n\!-\! r_K$ columns of the matrix $Q_K$. Let $\left(\begin{array}{c}
\widetilde{w}_1\cr \hline\hline\hline
\widetilde{w}_2
\end{array}\right)\!\!\in\!\! \mathbb{C}^{n+r_K}$ be the eigenvector for the deflated pencil (\ref{eq:LGEVP:n+rK}). The corresponding eigenvector for the $2n \times 2n$ pencil, before truncation satisfies
\begin{align*}
&\left(\begin{array}{c|||c c}
\widetilde{w}^*_1 & \widetilde{w}^*_2 & \widetilde{w}^*_3	
\end{array}\right) \left( \left( \begin{array}{c|||c}  X_{11} &  \begin{array}{c|c} X_{12} & X_{13} \end{array} \cr\hline\hline\hline \begin{array}{c} X_{21} \cr\hline \0_{n-r_K,n} \end{array} & 
\begin{array}{c|c} \0_{r_K,r_K} & \0_{r_K,n-r_K}\cr\hline \0_{n-r_K,r_K} & \0 \end{array}\end{array} \right) 
-\lambda \left(\begin{array}{c|||c} -R_M & \0 \cr\hline\hline\hline \0 & \begin{array}{c|c} -\Id_{r_K} & \0 \cr\hline \0 &-\Id_{n-r_K}\end{array}\end{array}\right) \right) = \\
& \left(\begin{array}{c|c|c}
\widetilde{w}^*_1X_{11} + \widetilde{w}^*_2X_{21} + \lambda \widetilde{w}^*_1R_M & 
\widetilde{w}^*_1X_{12} + \lambda \widetilde{w}^*_2 & 
\widetilde{w}^*_1X_{13} + \lambda \widetilde{w}^*_3
\end{array}\right) = \0,\;\mbox{therefore, $\widetilde{w}_3 = X^*_{13}\widetilde{w}_1 / \lambda$.}
\end{align*}
The vector $z$ for $C_2(\lambda)$ is
${\displaystyle \left(\begin{array}{c}
w_1\cr\hline\hline\hline
w_2
\end{array}\right) = \begin{pmatrix}
Q_M & \0\\
\0 & Q_K
\end{pmatrix} \left(\begin{array}{c}
\widetilde{w}_1\cr\hline\hline\hline
\widetilde{w}_2\cr
\widetilde{w}_3
\end{array}\right) = \left(\begin{array}{c}
Q_M\widetilde{w}_1\cr\hline\hline\hline
Q_K\begin{pmatrix}
\widetilde{w}_2\\
\widetilde{w}_3
\end{pmatrix}
\end{array}\right)}$.
The left eigenvector $y$ for the original problem is now chosen between $Q_M\widetilde{w}_1$ and $Q_K\begin{pmatrix}
\widetilde{w}_2\\
\widetilde{w}_3
\end{pmatrix}$. The one with smaller backward error is preferred.
\paragraph{Case 3.1}
The left eigenvectors for zero eigenvalues are represented by the last $n-r_K$ columns of $Q_K$, and for infinite eigenvalues by the last $n-r_M$ columns of $Q_M$. Let $\begin{pmatrix}
\widetilde{w}_1\\
\widetilde{w}_2
\end{pmatrix}\in \mathbb{C}^{r_K+r_M}$ be a left eigenvector for the truncated $(r_K+r_M) \times (r_K+r_M)$ pencil (\ref{eq:deflated:A-lB}). The corresponding eigenvector for the pencil $QC_2(\lambda)V$ satisfies
\begin{align*}
&\left(\begin{array}{c c || c c}
\widetilde{w}^*_1 & \widetilde{w}^*_2 & \widetilde{w}^*_3 & \widetilde{w}^*_4\
\end{array}\right) \left( \left( \begin{array}{c||c}  \begin{array}{c|c} \widetilde{X}_{11} & \widehat{X}_{12}\cr\hline \widetilde{X}_{21} & \widehat{X}_{22}\end{array} &  \begin{array}{c|c} \widehat{X}_{13} & X_{14}\cr\hline \widehat{X}_{23} & \0\end{array}  \cr\hline\hline \begin{array}{c|c} \0 & \0  \cr\hline \0 & \0 \end{array} & 
\begin{array}{c|c} R_X & \widetilde{X}_{24}\cr\hline \0 & \0 \end{array}\end{array} \right)
-\lambda 
\left( \begin{array}{c||c}  
\begin{array}{c|c} \widetilde{Y}_{11} & \widehat{Y}_{12}\cr\hline \widetilde{Y}_{21}  & \widehat{Y}_{22} \end{array} &  \begin{array}{c|c} \widehat{Y}_{13} & \0 \cr\hline \widehat{Y}_{23} & \0\end{array}  \cr\hline\hline \begin{array}{c|c} \0 & \0  \cr\hline \0 & \0 \end{array} & 
\begin{array}{c|c} \0 & \0 \cr\hline \0 & -\Id \end{array}\end{array} \right) \right) = \\
&= \left(\begin{array}{c|c|c|c}
\0 & \0& 
\widetilde{w}^*_1\widehat{X}_{13} + \widetilde{w}^*_2\widehat{X}_{23} + \widetilde{w}^*_3R_X-\lambda\widetilde{w}^*_1 \widehat{Y}_{13} - \lambda \widetilde{w}^*_2\widehat{Y}_{23} & 
\widetilde{w}^*_1 \widehat{X}_{14} + \widetilde{w}^*_3 \widetilde{X}_{24} + \lambda \widetilde{w}^*_4
\end{array} \right) = \0.
\end{align*}
The components $\widetilde{w}^*_3,\widetilde{w}^*_4$ are thus computed as
$$
\widetilde{w}^*_3 = \left(\lambda\widetilde{w}^*_1 \widehat{Y}_{13} + \lambda \widetilde{w}^*_2\widehat{Y}_{23} -\widetilde{w}^*_1\widehat{X}_{13} - \widetilde{w}^*_2\widehat{X}_{23}\right)R^{-1}_X,\;\;
\widetilde{w}^*_4 = \left( -\widetilde{w}^*_1 \widehat{X}_{14} - \widetilde{w}^*_3 \widetilde{X}_{24} \right)/\lambda.
$$
\begin{equation*}
\mbox{The eigenvector for $C_2(\lambda)$ is}\;
w \!=\!\! \begin{pmatrix}
Q_M & \0\\
\0 & Q_K
\end{pmatrix} \!\!\! \left(\begin{smallmatrix}
\mathbb{I}_{r_M} & \0 & \0 & \0\\
\0 & \0 & Q_X & \0 \\
\0 & \mathbb{I}_{r_K} & \0 & \0 \\
\0 & \0 & \0 & \mathbb{I}_{n-r_K}
\end{smallmatrix}\right) \!\!\!\left( \begin{smallmatrix}{c}
\widetilde{w}_1\cr
\widetilde{w}_2\cr \hline\hline
\widetilde{w}_3\cr
\widetilde{w}_4
\end{smallmatrix} \right)\!\! =\!\! \left( \begin{array}{c}
Q_M \left(\begin{smallmatrix}
\widetilde{w}_1\\
Q_X \widetilde{w}_3
\end{smallmatrix}\right)\cr \hline\hline\hline
Q_K \left(\begin{smallmatrix}
\widetilde{w}_2\\
\widetilde{w}_4
\end{smallmatrix}\right)
\end{array} \right),
\end{equation*}
and the candidates for the left eigenvector $y$ are $Q_M \left(\begin{smallmatrix}
\widetilde{w}_1\\
Q_X \widetilde{w}_3
\end{smallmatrix}\right)$ and $Q_K \left(\begin{smallmatrix}
\widetilde{w}_2\\
\widetilde{w}_4
\end{smallmatrix}\right)$. Again, we choose the one with smaller backward error.
\paragraph{Cases 2.2, 3.2 and 3.3} As for the right eigenvectors, in these cases we have the same procedure for the recovery of the left eigenvectors. Write the transformed linearization pencil as
$P(A-\lambda B)Q = \begin{pmatrix}
A_{\ell+j,\ell+j}-\lambda B_{\ell+j,\ell+j} & X\\
\0 & Y
\end{pmatrix}, \;\; {j=1,2,\ell_1}$.
Now, the left eigenvector $w\in \mathbb{R}^{2n}$ for the transformed pencil $P(A-\lambda B)Q$ is
$w = \left(\begin{smallmatrix}
w_1\\
w_2
\end{smallmatrix}\right)$, $w_2 = -w_1^*XY^{-1}$,
where $w_1$ is the computed left eigenvector of $A_{\ell+j,\ell+j}-\lambda B_{\ell+j,\ell+j}$.

For the left eigenvector of the original quadratic problem we choose between $P^*w(1:n)$ and $P^*w(n+1:2n)$, whichever has lower backward error.

Finally, note that we can choose to measure the backward error norm-wise or as more structured as discussed before.
\section{Numerical examples}\label{S=NUMEX}
The potential benefits of the proposed modifications of the \texttt{quadeig} framework have been already illustrated in Figure \ref{fig:EX-mob-manip-2} in Example \ref{EX-Intro-NLEVP-MM}, and in Figure \ref{fig:beam-cbe-right} in \S \ref{SS=B}. 
We now present a summary of testing the proposed method on the NLEVP collection of benchmark examples. With respect to the norm-wise backward error (\ref{eq:back-err-0}), the  comparison of \texttt{quadeig} and \texttt{KVADeig} shows no substantial difference, as expected  -- both use parameter scaling as a default. As noted in \cite{Hammarling:QUADEIG}, \texttt{polyeig} is weaker than \texttt{quadeig} in this respect. Furthermore, we noted that, as shown on the left panel in Figure \ref{fig:beam-cbe-right}, \texttt{quadeig} performs better than \texttt{polyeig} also in terms of the structured backward error. A similar conclusion is obtained with the \texttt{power\_plant} example, see Figure \ref{fig:plant-cbe-right}.

\begin{figure}[H]
	\centering
	\includegraphics[width=0.45\textwidth, height=1.6in]{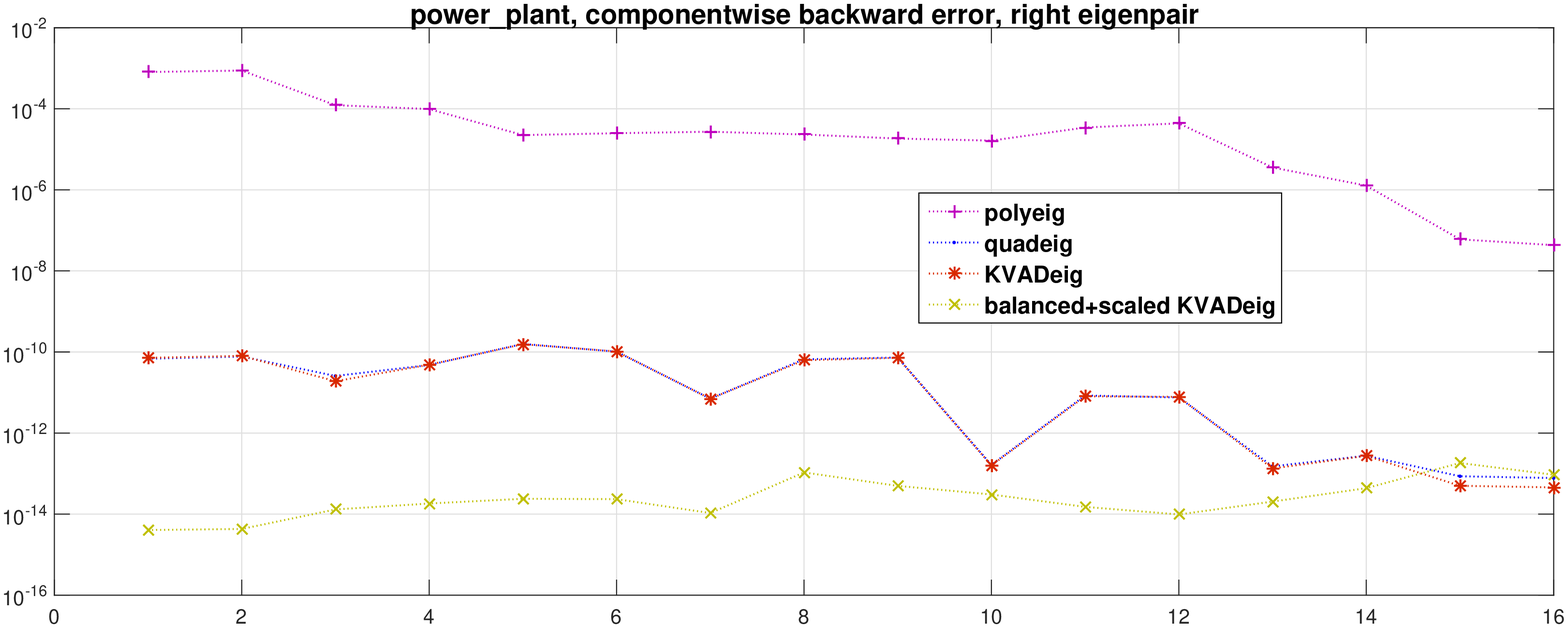}
	\includegraphics[width=0.45\textwidth, height=1.6in]{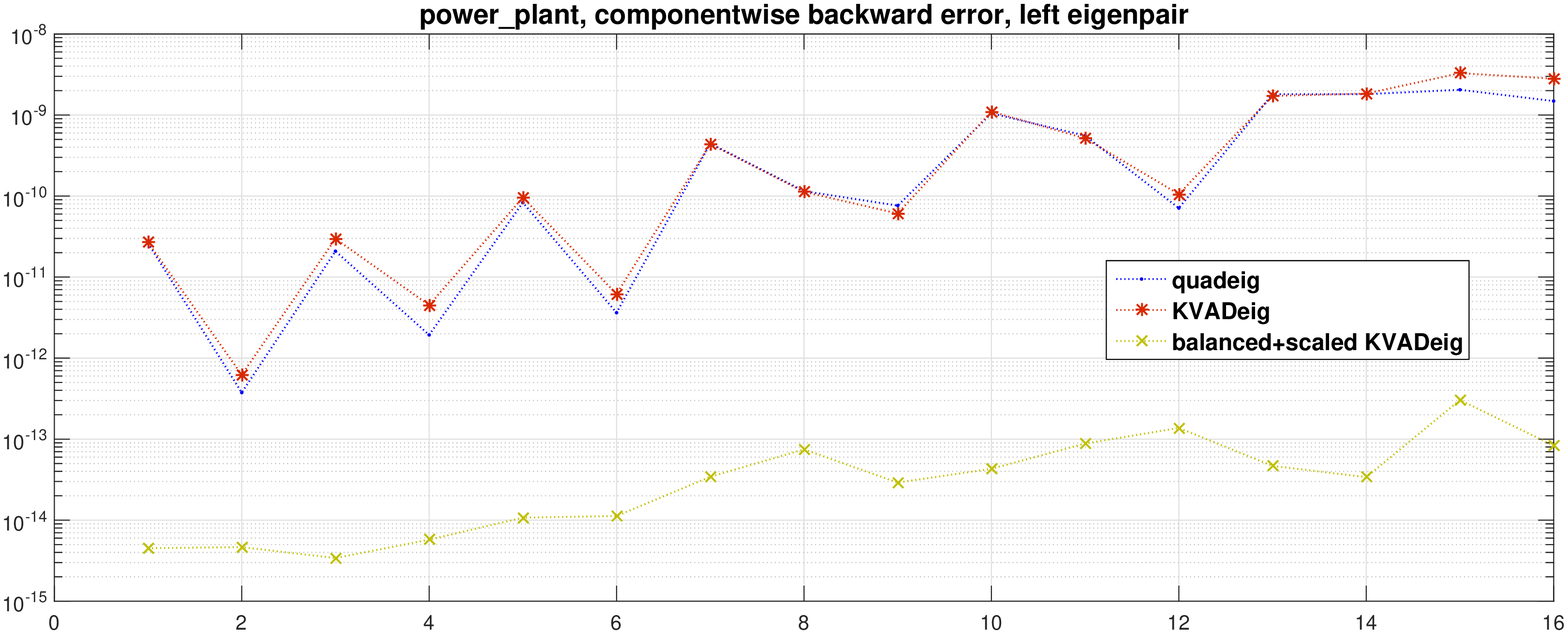}
	\caption{\label{fig:plant-cbe-right} The example \texttt{power\_plant} from the NLEVP collection. Note how both scaling and balancing contribute to reduction of the component-wise backward error.}	
\end{figure}	
\section{Concluding remarks}\label{S=Conclusion}
We have proposed a modification of the \texttt{quadeig} algorithm \cite{Hammarling:QUADEIG}. In addition to the parameter scaling, we use diagonal balancing and allow the rank revealing QR factorizations with full pivoting; this is shown to enhance numerical/backward stability as measured by structured backward error. Our backward error analysis is instructive -- it shows the mechanism of keeping the backward error small in the coefficient matrices of the original problem in the first two steps of the deflation process (thus including \texttt{quadeig}), and it also shows how the proposed modifications contribute to keeping the structured backward error small. Further we extend the deflation procedure of \texttt{quadeig} toward an upper triangular variation of the Kronecker Canonical Form, which opens the possibility of removing additional infinite eigenvalues that were beyond the reach of \texttt{quadeig}. We believe that our contributions to the reduction framework introduced by \texttt{quadeig} will be useful for both the full solution of medium size problems and the iterative methods (e.g. SOAR) where full solution of the projected problem is needed and where notrivial techniques of deflation, purging and locking may be critically dependent on it. These issues are the subject of our ongoing and future work. 
A LAPACK--style implementation of the method, with further fine details of numerical software development is in progress.
\vspace{-2mm}
\section*{Acknowledgments}
This research was part of the second authors thesis \cite{ISG-thesis-2018} and of the project IP-11-2013-9345 (\emph{Mathematical Modelling, Analysis and Computing With Applications to Complex Mechanical Systems}) supported by the Croatian Science Foundation.
The authors thank Serkan Gugercin (Virginia Tech), Luka Grubi\v{s}i\'{c} and Zvonimir Bujanovi\'{c} (University of Zagreb) for valuable comments. 

\bibliography{KVADEIG_References}
\bibliographystyle{plain} 

\end{document}